\theoremstyle{plain}
   \newtheorem{theorem}{Theorem}[section]
   \newtheorem{proposition}[theorem]{Proposition}
   \newtheorem{lemma}[theorem]{Lemma}
   \newtheorem{corollary}[theorem]{Corollary}
\theoremstyle{definition}
   \newtheorem{definition}[theorem]{Definition}
   \newtheorem{example}[theorem]{Example}
\theoremstyle{remark}
 \newtheorem{remark}[theorem]{Remark}
\newcommand{\xx}{\mathbf{x}}
\newcommand{\yy}{\mathbf{y}}
\newcommand{\zz}{\mathbf{z}}
\newcommand{\LL}{\mathcal{L}}
\newcommand{\MM}{\mathcal{M}}
\newcommand{\CC}{\mathcal{C}}
\newcommand{\RR}{\mathcal{R}}
\newcommand{\TT}{\mathcal{T}}
\newcommand{\I}{\mathcal{I}}
\newcommand{\J}{\mathcal{J}}
\newcommand{\V}{\mathcal{V}}
\newcommand{\R}{\mathbb{R}}
\newcommand{\C}{\mathbb{C}}
\newcommand{\sat}{\mathrm{Sat}}
\DeclareMathOperator{\loc}{local}
\DeclareMathOperator{\glo}{global}
\newcommand{\cc}{X_C=\xx_C}
\newcommand{\Cp}{X_{C'}=\xx_{C'}}
\newcommand{\children}{\mathrm{ch}}
\newcommand{\Lift}{\mathrm{Lift}}
\newcommand{\Quad}{\mathrm{Quad}}
\renewcommand{\subset}{\subseteq}
\renewcommand{\emptyset}{\varnothing}
\DeclareMathOperator{\per}{per}
\newcommand\independent{\protect\mathpalette{\protect\independenT}{\perp}}
\def\independenT#1#2{\mathrel{\rlap{$#1#2$}\mkern2mu{#1#2}}}
\def\newop#1{\expandafter\def\csname #1\endcsname{\mathop{\rm
#1}\nolimits}}
\newcommand{\TTb}{\overline{\TT}}
\keywords{decomposable models, context-specific independence,
Bayesian network, directed acyclic graph, toric ideal, algebraic statistics,  probability trees}
\subjclass[2020]{62R01, 62A09, 13P10, 13P25}
\title{Decomposable context-specific models}
\author{Yulia Alexandr}
\address{University of California, Berkeley, 1045 Evans Hall, Berkeley, CA 94720-3840 USA}
\email{yalexandr@berkeley.edu}
\author{Eliana Duarte}
\address{Universidade do Porto, Rua do Campo Alegre 687, 4169-007 Porto, Portugal}
\email{eliana.gelvez@fc.up.pt}
\author{Julian Vill}
\address{Fakult\"at f\"ur Mathematik, Otto-von-Guericke Universit\"at Magdeburg, Universitätsplatz 2, 39106 Magdeburg, Germany}
\email{julian.vill@ovgu.de}
\begin{document}

\begin{abstract}
    We introduce a family of discrete context-specific models, which we call \textit{decomposable}. We construct this family from the subclass of staged tree models known as CStree models.
    We give an algebraic and combinatorial characterization of all context-specific independence relations that hold in a decomposable context-specific model, which yields a Markov basis. 
    %\textcolor{cyan}{is a markov basis unique? Answ: No its just a generating set for the ideal}
    We prove that a directed version of the moralization operation applied to the graphical
    representation of a context-specific model does not affect the implied independence relations, thus affirming that these models are algebraically described by a finite collection of decomposable graphical models. More generally, we establish that several algebraic, combinatorial, and geometric properties of decomposable context-specific models generalize those of decomposable graphical models to the context-specific setting.
\end{abstract}

\maketitle

\section{Introduction}

A discrete graphical model $\MM(G)$ associated to a graph $G$ with $p$ nodes
is a set of joint probability distributions for a vector of discrete 
random variables $(X_1,\ldots,X_p)$. The distributions in $\MM(G)$ satisfy conditional independence (CI) relations according to the non-adjacencies of the graph $G$. The type of graph used to
encode CI relations is typically a directed acyclic graph (DAG) or an undirected 
graph (UG), although other kinds of graphs are possible \cite[]{L96}.

Graphical models are widely used in several fields of science, such as artificial intelligence, biology, and epidemiology \cite[]{KF09,pearl:1988,maathuis:2018}. However,
in some applications it is useful to consider models that encode a finer form of independence.
Context-specific independence (CSI)
is a generalization of conditional independence where the conditional independence between the random variables only holds for particular outcomes of the variables in the conditioning set. The classical graphical models based on DAGs or UGs are no longer able to capture
these more refined relations. Several extended graphical representations of CSI models have been proposed in the literature,  \cite[]{BFGK96,PZ03,CHM97,pensar:2015,SA08}. 
Apart from its usage to encode model assumptions more accurately,  context-specific independence is also important
in the study of structural causal models because the presence of more refined independence can improve the identifiability of causal links \cite[]{tikka:2019}.

A graphical model $\MM(G)$ associated to an undirected graph  $G$ is called \emph{decomposable} if $G$ is chordal. Decomposable graphical models
play a prominent role among graphical models because they exhibit optimal properties for probabilistic inference \cite[Ch. 9]{KF09}. There are several characterizations of decomposable models
 in terms of their algebraic, combinatorial, and geometric properties \cite[]{GMS06,L96,geiger:2001,DS21}.
 In this article we generalize this class of models to the context-specific setting,
by defining \emph{decomposable context-specific models}
(see Section~\ref{sec:3-vars})
and prove that they mirror many of  the properties that characterize decomposable graphical models. 
For brevity, we will refer to these models as decomposable CSmodels. These models will be constructed from a subclass of staged tree models, first introduced in \cite[]{SA08}. 
Staged tree models are a very general class of discrete (categorical) multivariate statistical models whose  applications, causal interpretation, and learning is a topic of interest in recent statistical literature
\cite{LG2022,LG2023,GBRS2018, CLRG2022r}. The distributions that belong to a staged tree model  $\MM(\TT)$ are determined by a directed tree $\TT$. The inner 
nodes of the tree are partitioned into sets called stages, and the leaves correspond to the state space of the model. The key feature of the stages is that they efficiently encode context-specific independence statements. In particular, every discrete DAG model can be represented using a staged tree model because any conditional independence statement is the union of several context-specific independence statements.

Previous work on context-specific versions
of decomposable models defines them by associating labels to the edges 
of a 
decomposable undirected graph and preserving the properties such as 
perfect elimination ordering and clique factorization \cite[]{janhunen:2017,corander:2003,NPKC2014}.
Our approach here is different in that we define  decomposable CSmodels to be staged tree models that satisfy two conditions. The first one is that the staged tree is balanced (Section~\ref{subsec:dCSmodels}), this property implies that the model is log-linear; the second one is that the staged tree is  a CStree (Section~\ref{sec:cstrees}, \cite[]{DS22}), this means that the stages satisfy additional properties which imply that the context-specific independence statements that hold for the model can be represented using a collection of DAGs. Thus each decomposable CSmodel is represented by a CStree.

Similar to discrete DAG models, there are two ways to define a 
CStree model $\MM(\TT)$, associated to the CStree $\TT$. The first approach uses a recursive factorization property
according to $\TT$, while the second one uses the local CSI relations  implied by $\TT$. 
From an algebro-geometric point of view, the
recursive factorization property is a polynomial parametrization of $\MM(\TT)$, while the polynomials associated 
to the local CSI statements in $\TT$ define the model $\MM(\TT)$ implicitly.
An important open problem that arises in the study of context-specific models is characterizing the set of all CSI statements implied
by the local CSI statements defining the model \cite[]{BFGK96,corander:2016}. Such a problem is especially amenable to algebraic techniques
because any CSI relation that holds in the model can be represented by a collection of polynomials. Algebraically, this problem can be solved by finding a prime polynomial ideal that defines  $\MM(\TT)$ implicitly \cite[]{GSS05}. Moreover, for log-linear models, the generators of the ideal that defines $\MM(\TT)$ form a Markov basis \cite[]{diaconis:98}. Our first main theorem is an algebraic characterization of the distributions that belong to a decomposable CSmodel. This theorem is similar to the Hammersley-Clifford theorem for undirected graphical models and its generalization \cite[Theorem 4.2]{GMS06}.
% Note that we do not assume positivity of the distribution. 
\begin{theorem}[Context-specific Hammersley-Clifford] \label{thm:CShc}
A distribution $f$ factorizes according to $\TT$ if and only if the polynomials associated to saturated CSI statements in $\TT$ vanish at~$f$.
Moreover, the polynomials associated to the saturated CSI statements of a decomposable CSmodel form its Markov basis.
\end{theorem}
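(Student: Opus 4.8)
I would establish the two assertions in tandem, by induction on the number of variables $p$, exploiting that a CStree is recursively structured: fixing an outcome of the first variable $X_1$ leaves below it a smaller staged tree which, by the CStree axioms and because balancedness is inherited, again represents a decomposable CSmodel on $p-1$ variables. The forward implication of the first assertion requires no induction and is the easy one: if $f$ factorizes according to $\TT$ then $f$ belongs to the model $\MM(\TT)$, so all CSI relations that hold in $\MM(\TT)$ --- in particular all its saturated ones --- hold at $f$; as a CSI relation $X_A\perp X_B\mid X_C=x_C$ holds at $f$ exactly when the $2\times2$ minors forming its associated polynomials vanish at $f$, the polynomials of the saturated CSI statements vanish at $f$. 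All the substance lies in the converse and in pinning down the ideal generators, and --- as in the Hammersley-Clifford theorem and its algebraic sharpening \cite[Theorem 4.2]{GMS06} --- the role of the decomposability hypothesis is to force the converse to hold for every nonnegative $f$, not only for strictly positive distributions.

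\textbf{The recursive factorization.} Assume the polynomials of all saturated CSI statements of $\TT$ vanish at $f$. The saturated statements involving $X_1$ allow the first variable to be peeled off: decomposability supplies a context in which this peel is globally consistent --- the analogue of a simplicial vertex of a chordal graph --- so that $f$ can be written as its $X_1$-marginal times a distribution on $(X_2,\dots,X_p)$. Coordinates where $f$ vanishes are handled by the familiar zero-pattern argument, a vanishing marginal entry forcing an entire slice of $f$ to vanish and the factorization identity there to be trivially true. One then checks that, after conditioning on each outcome of $X_1$, the surviving saturated CSI polynomials of $\TT$ become exactly the saturated CSI polynomials of the corresponding sub-CStree, so that the inductive hypothesis supplies a recursive factorization of each conditional distribution; reassembling these through the first split produces the recursive factorization of $f$ according to $\TT$. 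I expect the zero-pattern bookkeeping, together with checking that ``saturated CSI of $\TT$'' restricts cleanly to ``saturated CSI of the sub-trees,'' to be the main obstacle here, and this is the step in which the CStree structure (as opposed to a general staged tree) and balancedness are indispensable.

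\textbf{The Markov basis.} Because the CStree defining a decomposable CSmodel is balanced, the model is log-linear: $\overline{\MM(\TT)}$ is the toric variety of an integer matrix $A$, its vanishing ideal is the toric ideal $I_A=I_{\TT}$, and by \cite[]{diaconis:98} a Markov basis of the model is precisely a generating set of $I_A$, equivalently a set of moves connecting every fiber $\{x\ge 0:Ax=b\}$. The polynomials associated to a saturated CSI statement are quadratic binomials $p_{u^+}-p_{u^-}$ that lie in $I_A$, because every distribution in the model satisfies that CSI (the forward implication above), so the corresponding moves $u^+-u^-$ belong to $\ker_{\Z}A$. It remains to show that these saturated CSI moves connect all fibers, which I would prove by the same root-peeling induction: the saturated CSI moves involving $X_1$ reduce fiber-connectivity for $A$ to fiber-connectivity for the smaller matrix attached to each sub-CStree below an outcome of $X_1$, where the inductive hypothesis applies; the decomposable structure is what makes this reduction legitimate, exactly as it made the factorization step work. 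This parallels the way the running-intersection property of a chordal graph makes the local conditional-independence moves connect the fibers of a decomposable graphical model; I anticipate the fiber-connectivity claim --- showing that no ``global'' move outside the saturated CSI collection is ever needed --- to be the principal difficulty, with decomposability the hypothesis that precisely guarantees it.
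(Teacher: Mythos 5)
Your forward direction and the general frame (balanced $\Rightarrow$ toric, Diaconis--Sturmfels turning a generating set into a Markov basis) are fine, but the converse as you set it up has a genuine gap at the ``reassembling'' step, and that is where all the content of the theorem lives. Peeling off the root variable and invoking the inductive hypothesis for each conditional $f(\,\cdot\mid X_1=x_1)$ can only show that each conditional factorizes according to the context subtree $\TT_{X_1=x_1}$. Factorization according to $\TT$ demands strictly more: stages of $\TT$ typically contain vertices lying in \emph{different} root subtrees (any stage whose defining context does not fix $X_1$; e.g.\ the stages $\{0,1\}$ and $\{00,10\}$ in Figure~\ref{fig:example}), and for such stages the conditional distributions must agree \emph{across} subtrees. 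These coupling equalities are invisible to every single subtree, so no subtree-by-subtree induction can produce them; they have to be extracted directly from the saturated CSI polynomials of $\TT$ whose contexts do not fix $X_1$, and your sketch gives no mechanism for doing so --- ``decomposability supplies a context in which this peel is globally consistent'' is not an argument, and the zero-pattern bookkeeping from the classical decomposable-graph proof does not address it either, since there the components induced by a clique separator do cover all constraints, whereas here the root subtrees do not. The same gap propagates to your Markov-basis step: fiber connectivity of the saturated moves is equivalent (by the fundamental theorem of Markov bases) to those binomials generating the toric ideal, i.e.\ to the very statement whose proof is missing.

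For comparison, the paper runs the induction in the opposite direction, peeling off the \emph{last} level rather than the root, precisely because the coupling is then handled algebraically: for balanced $\TT$ one has $\ker(\psi_\TT)=\ker(\psi_{\TTb})\times_{\mathcal{A}}\langle 0\rangle=\langle \Quad,\Lift(F)\rangle$, the toric fiber product graded by the stages of level $p-1$. The $\Quad$ generators are exactly the saturated statements $X_p\independent X_{[p-1]\setminus C_i}\mid X_{C_i}=\xx_{C_i}$, and the heart of the proof of Theorem~\ref{thm:algebra-saturated-statements} is showing that every lifted generator in $\Lift(F)$ is again a polynomial of a saturated CSI statement, using the $\mathcal{A}$-homogeneity of $F$ (this is where balancedness enters concretely), absorption, contraction, weak union, and the strengthened intersection rule of Proposition~\ref{prop:weird_formula}; the Markov-basis claim and the boundary case then follow immediately (Remark after the theorem). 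None of these ingredients appears in your proposal, so to salvage a root-peeling proof you would need a substitute lemma showing that the cross-subtree stage equalities are forced by the saturated statements --- essentially redoing the $\Lift(F)$ analysis in different clothing.
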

Every decomposable CSmodel $\MM(\TT)$ is a CStree model, therefore it can be represented by a collection of
\emph{minimal context DAGs}. We also prove that the
saturated CSI statements (i.e. statements that involve all of the variables in the DAG) in Theorem~\ref{thm:CShc} are obtained as the union of saturated
$d$-separation statements that hold in each of the minimal context DAGs that represent the model $\MM(\TT)$ (Corollary~\ref{cor:saturated-statements-for-minimal-dags}). As a consequence of our algebraic characterization of decomposable CSmodels, we obtain the next two theorems.

The directed moralization of a DAG is constructed by moralizing the DAG but keeping all edges directed and directing the new edges (see Definition~\ref{def:directed_moralization}).
\begin{theorem}
    In a decomposable CSmodel, the directed moralizations of the minimal context DAGs imply the same CSI statements as the model itself. In particular, one can apply the directed moralization operation until all context DAGs are perfect.
\end{theorem}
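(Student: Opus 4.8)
The plan is to reduce the statement to a purely graph-theoretic fact about $d$-separation and then invoke Theorem~\ref{thm:CShc}. By that theorem, the model $\MM(\TT)$ is cut out set-theoretically (inside the probability simplex) by the polynomials attached to the saturated CSI statements of $\TT$, and by Corollary~\ref{cor:saturated-statements-for-minimal-dags} these saturated CSI statements are precisely the union, over the minimal context DAGs $G_1,\dots,G_k$ representing $\MM(\TT)$, of the saturated $d$-separation statements of each $G_i$ (tagged with the corresponding context). Hence both $\MM(\TT)$ itself and the collection of CSI statements it implies are determined by this union of saturated statements. It therefore suffices to prove that passing from a DAG $G$ to its directed moralization $G'$ leaves the set of saturated $d$-separation statements unchanged; applying this to each $G_i$ then shows that $G_1',\dots,G_k'$ yields the same union of saturated statements, hence the same model and the same implied CSI statements.

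\textbf{Key lemma.} For a DAG $G$ and its directed moralization $G'$, a statement $X_A \perp X_B \mid X_C$ with $A \sqcup B \sqcup C$ equal to the entire vertex set of $G$ holds in $G$ if and only if it holds in $G'$. Since $G'$ arises from $G$ by adding edges (and only orienting the new ones, keeping the graph acyclic by Definition~\ref{def:directed_moralization}), every $d$-separation relation of $G'$ holds in $G$; for the converse it is enough to consider the addition of a single moral edge $e=\{u,v\}$, where $u,v$ are parents of a common child $w$ in the current graph, and to show no saturated statement is lost. Suppose $X_A \perp X_B \mid X_C$ holds before adding $e$ but not after. Then $e$ must cross the partition, say $u\in A$ and $v\in B$. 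If $w\in A$ then the edge $v\to w$ already joins $B$ to $A$ in $G$, so the statement was already false; the case $w\in B$ is symmetric; and if $w\in C$ then $u\to w\leftarrow v$ is a collider at $w\in C$, hence an active path from $A$ to $B$ given $C$ in $G$, again a contradiction. Iterating over the finitely many moral edges proves the claim, and the orientation assigned to the new edges is immaterial: a collider is only ever produced at the pre-existing child $w$, and a single edge between two vertices obstructs their separation irrespective of its direction.

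\textbf{Conclusion.} By the lemma, $G_1',\dots,G_k'$ has the same union of saturated $d$-separation statements as $G_1,\dots,G_k$, so by Corollary~\ref{cor:saturated-statements-for-minimal-dags} and Theorem~\ref{thm:CShc} it cuts out $\MM(\TT)$ and implies exactly the same CSI statements as the model. For the last assertion, each application of directed moralization that actually adds an edge strictly increases the edge count of some context DAG, and a DAG on $p$ vertices has at most $\binom{p}{2}$ edges, so the iteration terminates; at termination no moral edge can be added, i.e.\ in every context DAG the parents of each node are pairwise adjacent, which is precisely the condition that all context DAGs are perfect.

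\textbf{Main obstacle.} The crux is the key lemma. The essential observation --- that a moral edge only ever makes two vertices of the \emph{same} block adjacent, or activates a collider already lying in the conditioning block, so it can destroy only non-saturated separations --- is short, but the bookkeeping for the iterative, orientation-assigning nature of the operation must be handled with care: one must confirm the graph stays acyclic at every stage (so that $d$-separation is defined) and that moral edges added in later rounds, possibly between vertices that became parents only through earlier moral edges, still create colliders exclusively at the relevant child. A secondary point is to confirm that ``implying the same CSI statements'' is genuinely governed by the saturated statements, which is exactly the role played by Theorem~\ref{thm:CShc} together with its Markov-basis conclusion.
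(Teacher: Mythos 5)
Your key lemma is false, and this is exactly the difficulty the paper has to work around. Adding a moral edge can destroy a saturated $d$-separation statement even when the new edge does not join the two blocks $A$ and $B$: it can run from $A$ or $B$ into the conditioning block $C$ and thereby create a \emph{new} common child (equivalently, a new active collider path) for a pair $i\in A$, $j\in B$. Concretely, take $G$ on $[4]$ with edges $1\to 3$, $2\to 4$, $3\to 4$. The saturated statement $X_1\independent X_2\mid X_{\{3,4\}}$ holds in $G$ ($1$ and $2$ are non-adjacent in $G^m$ and have no common child). Directed moralization adds $2\to 3$ (since $2,3$ are parents of $4$), and in $G^{dm}$ the vertices $1$ and $2$ become parents of $3$, so the path $1\to 3\leftarrow 2$ with $3$ in the conditioning set is active and the statement fails. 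In your case analysis you assume the offending edge ``must cross the partition, say $u\in A$ and $v\in B$,'' which silently excludes precisely this situation; in the example the moral edge joins $B$ to $C$. This failure mode is not incidental: it is the content of the paper's Lemma~\ref{lem:saturated_CI_induced_subgraph}, which characterizes the saturated statements in $\sat(G)\setminus\sat(G^{dm})$ via exactly such induced subgraphs, and Example~\ref{ex:counterexample} / Figure~\ref{fig:motivatingEx} exhibit balanced CStrees whose empty context DAG genuinely loses statements under moralization.

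A related red flag is that your argument never uses balancedness of $\TT$ beyond invoking Theorem~\ref{thm:CShc}: if your lemma were true, the theorem would hold for \emph{every} collection of context DAGs, but it does not (the single collider DAG $1\to 3\leftarrow 2$, which is an unbalanced CStree, changes its model under directed moralization). The correct statement is not that each individual context DAG keeps its saturated statements, but that the \emph{union over all contexts} is preserved: the paper proves (Proposition~\ref{prop:induction_step}, which relies on balancedness through Theorem~\ref{thm:balanced-for-every-context}, Theorem~\ref{thm:three_perfect_iff_balanced} and Proposition~\ref{prop:weird_formula}, together with the counting Lemma~\ref{lem:reduction_to_subgraphs}) that every saturated statement destroyed by moralizing one context DAG is implied, context by context, by some \emph{other} minimal context DAG of the same balanced tree; this, plus absorption and the algebraic identification $\ker(\psi_\TT)=I_{\sat(\TT)}$, gives Theorem~\ref{thm:balanced_defined_by_perfect}. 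Your framing of the problem (reduce to saturated statements via Theorem~\ref{thm:CShc} and Corollary~\ref{cor:saturated-statements-for-minimal-dags}, then handle one moral edge at a time, then terminate after at most $\binom{p}{2}$ steps) matches the paper's outer structure, but the graph-theoretic core as you state it cannot be repaired without bringing in the interplay between different context DAGs that balancedness provides.
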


\begin{theorem}
    Every decomposable CSmodel  is an intersection of a finite number of decomposable DAG models.
\end{theorem}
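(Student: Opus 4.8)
The plan is to use Theorem~\ref{thm:CShc} to reduce the statement to a combinatorial fact about conditional independence. By Theorem~\ref{thm:CShc}, the decomposable CSmodel $\MM(\TT)$ is cut out (as a variety) by the binomials attached to the saturated CSI statements holding in $\TT$. So it is enough to exhibit finitely many perfect DAGs $\widehat{G}_1,\dots,\widehat{G}_N$ on the full variable set $[p]$ with the property that the binomials collected from the saturated $d$-separation relations of all the $\widehat{G}_i$ are exactly the binomials of the saturated CSI statements of $\TT$. Granting this, and using that for a decomposable (chordal) model the $d$-separation binomials generate the defining ideal, one obtains
\[
  \MM(\TT)\;=\;V\!\Bigl(\textstyle\sum_i I_{\widehat{G}_i}\Bigr)\;=\;\bigcap_{i=1}^{N}\MM(\widehat{G}_i),
\]
an intersection of finitely many decomposable DAG models; since a perfect DAG model is exactly a decomposable graphical model, this is the desired conclusion.

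To construct the $\widehat{G}_i$ I would start from the minimal context DAGs of $\TT$. By the preceding theorem on directed moralization we may iterate that operation until every minimal context DAG $G^{\xx_C}$ is perfect, without changing $\MM(\TT)$ or the CSI statements it implies; and by Corollary~\ref{cor:saturated-statements-for-minimal-dags} the saturated CSI statements of $\TT$ are precisely the saturated $d$-separation relations of these DAGs, each read inside its context $\xx_C$. The remaining work is to reorganize these context-indexed families of $d$-separations into the $d$-separation structures of perfect DAGs on all of $[p]$: for each pair of variable groups and each conditioning set, one records the set of contexts in which the corresponding saturated CSI statement occurs, and the decomposability hypothesis (Section~\ref{sec:3-vars}) is what forces these context sets to fit together so that the whole collection is the union of the conditional-independence relations of finitely many chordal graphs, each realized by a $\widehat{G}_i$. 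The hypothesis is genuinely needed here: for an arbitrary balanced CStree these statements need not assemble in this way, and already in the three-variable case there are balanced CStree models that are not an intersection of decomposable DAG models, so the conclusion fails without decomposability.

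The main obstacle is exactly this reassembly step -- proving that decomposability forces the (perfect) minimal context DAGs to be mutually compatible enough that their saturated $d$-separations reconstitute genuine conditional-independence relations of chordal graphs. I would carry it out by induction on $p$ along the recursive clique/separator decomposition that a decomposable CStree carries, in parallel with the clique-sum construction of a chordal graph: peel off a simplicial part of the CStree, apply the inductive hypothesis to what remains, and verify that the gluing introduces no saturated CSI statement that is not already the slice of a full conditional-independence relation of one of the DAGs built so far. The rest is bookkeeping -- perfect DAGs give decomposable models, and a CStree has only finitely many minimal contexts, so the list $\widehat{G}_1,\dots,\widehat{G}_N$ is finite.
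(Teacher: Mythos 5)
Your reduction via Theorem~\ref{thm:CShc}, the moralization theorem and Corollary~\ref{cor:saturated-statements-for-minimal-dags} matches the paper's machinery, but the step you defer as ``the main obstacle'' --- reassembling the context-indexed saturated statements into the $d$-separation structure of perfect DAGs $\widehat{G}_1,\dots,\widehat{G}_N$ on the \emph{full} vertex set $[p]$ --- is not merely hard, it is impossible in general, so the whole plan fails. A saturated CI statement of a DAG on $[p]$ contributes its $2\times 2$ minors for \emph{every} outcome of the conditioning variables, so $\bigcap_i\MM(\widehat{G}_i)$ is always an ordinary CI model, and the equality $\MM(\TT)=\bigcap_i\MM(\widehat{G}_i)$ would force every $\widehat{G}_i$ to encode only CI statements valid on all of $\MM(\TT)$. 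Take the first family of Example~\ref{ex:p=3} (Figure~\ref{fig:all-trees-for-3-variables}) with binary variables and the single minimal context $X_2=x_2^1$: this is a balanced CStree, hence a decomposable CSmodel, whose model is the hypersurface given by $X_1\independent X_3\mid X_2=x_2^1$; a generic point of it satisfies no nontrivial CI statement, so the only DAG model on $\{1,2,3\}$ containing it is the full simplex, and no intersection of DAG models on the full variable set equals it. Your aside that ``already in the three-variable case there are balanced CStree models that are not an intersection of decomposable DAG models, so the conclusion fails without decomposability'' is self-contradictory --- balanced CStree models \emph{are} the decomposable CSmodels by definition --- and what it actually reveals is that the theorem cannot be read the way you read it. The induction along a ``recursive clique/separator decomposition'' of a CStree also has no counterpart in the paper and cannot rescue an unattainable target.

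The paper's route keeps the DAGs context-specific. By Theorem~\ref{thm:algebra-saturated-statements} and Corollary~\ref{cor:saturated-statements-for-minimal-dags}, $\ker(\psi_\TT)=\sum_{X_C=\xx_C\in\CC}I_{\sat(G_{X_C=\xx_C})}$, and Theorem~\ref{thm:balanced_defined_by_perfect} (proved via directed moralization, Lemmas~\ref{lem:saturated_CI_induced_subgraph},~\ref{lem:reduction_to_subgraphs} and Proposition~\ref{prop:induction_step}) shows each minimal context DAG can be replaced by its perfect closure $(G_{X_C=\xx_C})^{\per}$ without changing this ideal. The ``finite number of decomposable DAG models'' are exactly these finitely many perfect context DAGs, each imposing the constraints of a decomposable model on the variables $[p]\setminus C$ inside its own context slice $X_C=\xx_C$; the model is the intersection of those per-context decomposable models, not of decomposable DAG models on all of $[p]$. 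So up to Corollary~\ref{cor:saturated-statements-for-minimal-dags} your outline agrees with the paper; the divergence, and the genuine gap, is the attempt to lift the per-context perfect DAGs to DAGs on the full variable set.
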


Our work also contributes to understanding the set of CSI statements that hold for certain context-specific models known as LDAGs \cite[]{pensar:2015}. Briefly, an LDAG is a context-specific model represented by a DAG with edge labels, these labels encode the extra CSI relations that hold for the model. Every CStree is an LDAG, and every LDAG is a staged tree \cite[]{DS22}. Whenever the LDAG is represented by a balanced CStree, Theorem~\ref{thm:CShc} gives a complete characterization of the CSI statements that hold for the LDAG. In general, however, describing all CSI statements that hold for LDAGs is coNP-hard \cite[]{corander:2016}. 
The decomposable models studied in \cite[]{janhunen:2017,corander:2003,NPKC2014} are defined in a similar fashion as the LDAGs, except their starting point is a decomposable undirected graph. 
We leave it as a direction for future
research to establish the relation between
these context specific decomposable models  and the decomposable CSmodels  we define.

This paper is organized as follows. In Section~\ref{sec:prelim} we present the necessary background on DAG models, staged tree models
 and CStree models. Decomposable CSmodels are defined in Section~\ref{sec:3-vars}
where we illustrate the nature of CStrees and decomposable CSmodels by presenting
a classification of all CStree models in three random variables.
A highlight from this section is Theorem~\ref{thm:three_perfect_iff_balanced}, which states that if the number $p$ of random variables equals $3$ then $\MM(\TT)$ is a decomposable CSmodel if and only if all of its minimal context DAGs
 are perfect. This is no 
longer true for $p=4$ by Example~\ref{ex:counterexample}.
In Section~\ref{sec:combinatorics}, we establish several combinatorial properties for balanced CStrees. Finally, Section~\ref{sec:algebra} contains the proofs of our main results. 

We assume the reader has some familiarity with polynomial ideals at the level of \cite[]{cox2015}. Although we introduce the basics of graphical models, we refer the reader to \cite[Ch. 3,4]{L96}, \cite[Ch. 3,4,5]{KF09}, \cite[Ch. 1,2,3]{maathuis:2018} for a more detailed  presentation. For a unified algebraic statistics perspective we suggest the book by Sullivant \cite[Ch.013]{S19}. Our methods rely heavily on properties of staged tree models. The reader may refer to \cite[]{CGS18} for a comprehensive introduction to this class of models.

\section{Preliminaries}\label{sec:prelim}
A discrete statistical model is a subset of the probability simplex. We consider
 models that are algebraic varieties intersected with the simplex. We are interested in finding their defining equations. We use the combinatorial properties of the equations to gain insight into the statistical properties of the model.

\subsection{Notation}
For any natural number $d$ we define $[d]\coloneqq\{1,2,\ldots,d\}$.
Consider a vector of discrete random variables
$X_{[p]}=(X_1,\ldots, X_p)$ where $p\in \mathbb{N}$  and for each $i\in [p]$, $d_i\in \mathbb{N}$, $[d_i]$ is the state space of  $X_i$ and $\RR=\prod_{i\in [p]}[d_i]$ is the state space of  $X_{[p]}$. 
Elements in $\RR$ are sequences $\xx=(x_1,\ldots, 
x_p)=x_1\cdots x_p$ where $x_k\in [d_k]$ for every $k\in [p]$. For $A\subset [p]$, $X_{A}$ is a subvector of discrete random variables with indices in  $A$, and $\RR_A=\prod_{i\in A}[d_i]$ is the state space of $X_A$. We also
use the bold notation $\xx$ or $\yy$ to denote elements in any marginal space
$\RR_{A}$ to avoid the excessive use of subscripts. At times it is
useful to recall which marginal space the outcome belongs to, in this case we write $\xx_A$ or $\yy_A$ for elements in $\RR_{A}$. We shall use the three notations $\xx$, $(x_1,\ldots, 
x_p)$, or $ x_1\cdots x_p$ throughout this article depending on whether it
is necessary to emphasize the outcomes $x_i\in \RR_{\{i\}}$
or not.  Whenever $\xx=(x_1,\ldots,x_p)\in \RR$, the element $\xx_A$ is the subvector $(x_i)_{i\in A}$ of $\xx$. For
two disjoint subsets $A,B\subset [p]$ and $\xx\in \RR$, the element
$\xx_{A}\xx_B=\xx_{A\cup B}$ is the subvector $(x_i)_{i\in A\cup B}$.
The notation $\xx_{A}\xx_B$ is reminiscent of concatenation of strings, nevertheless we use it to denote the element $\xx_{A\cup B}$ 
in $\RR_{A\cup B}$ so the order of $A\cup B$ is important. A probability distribution 
$f$ for $X_{[p]}$ is a tuple $(f(\xx): \xx\in\RR)$ where $f(\xx)>0$ and 
$\sum_{\xx \in \RR}f(\xx)=1$, $f(\xx)$ is the probability of the outcome
$\xx\in \RR$. The $|\RR|-1$ dimensional open probability simplex, denoted by $\Delta_{|\RR|-1}^{\circ}$ consists of all possible positive probability distributions for $X_{[p]}$.

To define a subvariety of the probability simplex we use the  polynomial ring $\R[D]:=\R[p_{\xx}\colon \xx\in \RR]$. For any subset $H \subset\R[D]$ we denote the algebraic variety
$\V(H)=\{x\in \C^{|\RR|} : g(x)=0 \text{ for all } g\in H \}$. The intersection $\V(H)\cap \Delta_{|\RR|-1}^{\circ}$ is a statistical model. In our situation, statistical models can also be defined as closed images of rational maps intersected with the probability simplex. 
A main question in algebraic statistics is to find the implicit equations that
define the parametrized model. In this statistical setting, the defining algebraic equations translate into restrictions on the distributions, these  encode  model assumptions.

\begin{example} \label{ex:adag}
Consider the case where $p=3$ and $X_1,X_2,X_3$ are binary random variables.
The graph $G=([3],1\to 2\to 3)$ imposes conditional independence relations among 
the $X_i$, $i\in[3]$. The state space of $(X_1,X_2,X_3)$ is $\RR=\{0,1\}^3$ because all random variables are binary,  hence $|\RR|=8$. The polynomial ring is $\R[D]=\R[p_{000},p_{001},p_{010},p_{011},p_{100},p_{101},p_{110},p_{111}]$. As we will see in the next section, $G$ tells us that $X_1$ is independent of $X_3$, given $X_2$, as there is no edge from $1$ to $3$. This CI relation translates into two equations, one for each outcome of the 
conditioning variable $X_2$:
\[
p_{100}p_{001}-p_{101}p_{000}=0,\quad p_{110}p_{011}-p_{111}p_{010}=0.
\]
These two equations, together with the hyperplane $\sum_{\xx\in\RR} p_\xx=1$, define a variety in 
the affine 8-dimensional space. Taking into account positivity conditions yields the model inside $\Delta_{7}^{\circ}$. By ignoring the sum-to-one hyperplane we immediately see that this model
defines a toric variety in $\mathbb{P}^7$ as it is cut out by a prime binomial ideal.
\end{example}

We now introduce the formal setup for context-specific independence. %full setup formally. 
%We start with staged trees, which are more general than the DAG models in the previous example.

\subsection{ Context-specific conditional independence statements} \label{subsec:csis}
Let $A,B,C,S$ be disjoint subsets of $[p]$ and let $\xx_C\in \RR_{C}$. A distribution $f\in \Delta_{|\RR|-1}^{\circ}$ satisfies the \emph{context-specific conditional independence statement} (CSI statement)  $X_{A}\independent X_{B}|X_S,X_{C}=\xx_C$ if for all outcomes $(\xx_A,\xx_B,\xx_S)\in \RR_A\times \RR_B \times \RR_S$
 \[f(\xx_A|\xx_B,\xx_S,\xx_C)=f(\xx_A|\xx_S,\xx_C).\]
  %We say that $X_{A}\independent X_{B}|X_S,X_{C}=\xx_C$ is \emph{saturated} if and only if $A\cup B\cup S\cup C=p$.
 To each CSI statement $X_A\independent X_B|X_S, \cc$ we associate the collection of polynomials
\begin{align}\label{eqn:associated-polynomial-to-CSI-statements}
    p_{\xx_A\xx_B\xx_S\xx_C+}p_{\yy_A\yy_B\xx_S\xx_C+}-p_{\xx_A\yy_B\xx_S\xx_C+}p_{\yy_A\xx_B\xx_S\xx_C+}
\end{align}
for every $\xx_A,\yy_A \in \RR_A$, $\xx_B,\yy_B \in \RR_B$ and  $\xx_S\in \RR_{S}$ where 
\[
p_{\xx_A\xx_B\xx_S\xx_C+}=\sum_{\zz\in\RR_{[p]\setminus (A\cup B\cup C\cup S)}} p_{\xx_A\xx_B\xx_S\xx_C\zz}.
\]
Note that these polynomials are the $2\times 2$ minors of the matrix $(p_{\xx_A\xx_B\xx_{S}\xx_C+})_{\xx_A\in\RR_A,\xx_B\in\RR_B}$ for all outcomes $\xx_S\in\RR_S$. We define the ideal $I_{X_A\independent X_B|X_S, \cc}$ in $\R[D]$ to be the ideal generated by all the polynomials in (\ref{eqn:associated-polynomial-to-CSI-statements}).

Given a collection $\CC$ of CSI statements, we define the CSI ideal generated by the polynomials associated to all CSI statements in $\CC$, i.e.
\[I_{\CC}=\sum_{X_A\independent X_B|X_S, \cc \in \CC} I_{X_A\independent X_B|X_S, \cc}.\]
 When the set $C$ in the conditioning of a CSI statement is empty, we recover the notion of 
 \textit{conditional independence} (CI) which are  statements of the form $X_{A}\independent X_{B}|X_S$. In this case $I_\CC$ is a
 conditional independence ideal, see \cite[Ch.4]{S19}. 
 Graphical models are a widely used class of CI models where the CI statements among random variables are captured by a  graph. More general CSI statements cannot be easily encoded using a single graph, to encode these, the staged tree model is more suitable. We define discrete graphical models associated to DAGs (also known as Bayesian networks) and staged tree models in the next section.

 \subsection{DAGs and staged trees}\label{sec:dags-and-staged-trees}
 A directed acyclic graph (DAG) $G$ is a pair 
 $([p],E)$ where~$[p]$ is the set of vertices and $E$ the set of directed edges such that there is no directed loop in~$G$. For any DAG $G$, we fix a topological ordering on its vertices, which means if $i\to j$ is an edge in $G$, then $i < j$. The DAG model $\MM(G)$ is the set of all the distributions $f\in\Delta_{|\RR|-1}^{\circ}$ that satisfy the recursive factorization property according to $G$. That is, for all $\xx\in \RR$ we~have
\begin{align*}
    p_{\xx}=f(\xx)=\prod_{v\in [p]}f(\xx_v|\xx_{\pa(v)}).
\end{align*}

On the other hand, DAG models can also be defined implicitly by using the polynomials associated to
 the CI statements via different Markov properties \cite[Section 1.8]{maathuis:2018}. The \textit{local Markov property} of a DAG $G$ is the collection of CI statements
$$\loc(G) = \{X_v \independent X_{\nd(v)}\, | \, X_{\pa(v)}: v \in V \},$$
where $\nd(v)$ denotes the set of all non-descendants of the node $v$ and $\pa(v)$ denotes the set of its parents in $G$. While  these local constraints contain information about independence relationships, they are often not enough to fully describe the set of all CI statements that hold in $G$.
Via the axioms of conditional independence, \cite[Prop. 4.1.4]{S19}, these local constraints may imply other CI statements. Hence, it is necessary to introduce the global Markov property. Any CI relation that holds for the model $\MM(G)$ is obtained from $G$ via the technical notion of $d$-separation statements \cite[Section 3.2.2]{L96}. The set of all $d$-separation statements, denoted by $\glo(G)$, defines the \textit{global Markov property} on $G$. The corresponding ideal $I_{\glo(G)}$ is not prime in general. It is, however, prime and generated by binomials if the graph is perfect (see Definition~\ref{def:perfect}).
In Example~\ref{ex:adag}, the local Markov property
and the global Markov property of $G$ coincide and the ideal
$I_{\loc(G)}=I_{\glo(G)}$ is prime and binomial.

While DAG models are well-suited to encode CI statements, they cannot encode CSI statements.
There are several models one could use instead to encode CSI statements.  In this paper, we focus on staged tree models. The definition of staged tree model we
present here is not as general as in \cite[]{SA08}. The reason for this choice is that we are interested only in representing CSI statements that hold for a vector of discrete random variables $X_{[p]}$. Thus the  staged trees we consider
represent the outcome space of $X_{[p]}$ as an event tree and we construct them as follows.

Let $\pi_1\cdots \pi_p$ be an ordering of $[p]$ and let $\TT=(V,E)$ be a rooted tree with
$V:=\{\mathrm{root}\}\cup \bigcup_{j\in[p]}\RR_{\{\pi_1\cdots \pi_j\}        }$ and set
of edges
\begin{align*}
E:=&\{\mathrm{root}\to x_{\pi_1}: x_{\pi_1}\in [d_{\pi_1}]\}\,\cup\\
&\{x_{\pi_1}\cdots x_{\pi_{k-1}}\to x_{\pi_1} \cdots x_{\pi_k}:x_{\pi_1}\cdots x_{\pi_{k-1}} \in \RR_{\pi_1,\ldots, \pi_{k-1}}, x_{k} \in [d_k],k\in [p]\}.
\end{align*}
Note that  elements in the set of vertices $V$ of $\TT$ are outcomes in the marginal outcome spaces $\RR_{\pi_1\cdots \pi_j}$
for $j\in [p]$. For several of the definitions and proofs 
we shall present,  when considering an element  $v\in V$, it is not 
always necessary to specify which marginal outcome space
contains $v$. 
Thus we simply write $v$ for an arbitrary element in $V$.

The level of a node $v\in V$ is the number of edges in the unique
path in $\TT$ from the root to $v$. The $k$th level of
$\TT$ is the set of all nodes in $\TT$ at level $k$ and is denoted by
$L_k$. For $\TT$ as defined, we see that $L_k$ is in bijection
with the outcome space $\RR_{\{\pi_1,\ldots, \pi_k\}}$  of the random vector $X_{\{\pi_1,\ldots,\pi_{k}\}}$.
Hence we identify the outcomes of the variable $X_{\pi_k}$ with the level $L_k$,
and we denote this association by $(L_1,\ldots, L_p)\sim (X_{\pi_1},\ldots, X_{\pi_p})$.
For any tree $\TT$ we write $V_\TT$ and $E_{\TT}$ for its sets of vertices and edges, respectively.
 We write $E(v)$ to denote the set of all outgoing edges from $v$. Without loss of generality, throughout this paper we will assume $\pi_i=i$ for all $i\in [p]$. In particular, this implies $(L_1,\ldots, L_p)\sim (X_{1},\ldots, X_{p})$. The tree in Figure~\ref{fig:example} represents an event tree for a vector $(X_1,X_2,X_3)$ of binary random variables. To illustrate part of the notation, in this tree we have $L_{2}=\{00,01,10,11\}$, which is exactly the marginal outcome space $\RR_{\{1,2\}}$  and $E(0):=\{0\to 00, 0\to 01\}$.
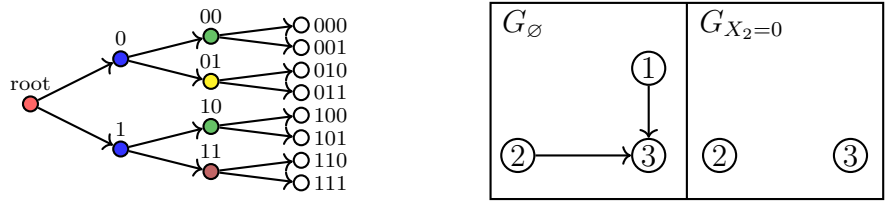
\begin{figure}
\centering
    \begin{subfigure}{}
    \begin{tikzpicture}[thick,scale=0.3]

 	 \node[circle, draw, fill=black!0, inner sep=2pt, minimum width=1pt] (w3) at (0,0)  {};
 	 \node[circle, draw, fill=black!0, inner sep=2pt, minimum width=1pt] (w4) at (0,-1) {};
 	 \node[circle, draw, fill=black!0, inner sep=2pt, minimum width=1pt] (w5) at (0,-2) {};
 	 \node[circle, draw, fill=black!0, inner sep=2pt, minimum width=1pt] (w6) at (0,-3) {};
 	 \node[circle, draw, fill=black!0, inner sep=2pt, minimum width=1pt] (v3) at (0,-4)  {};
 	 \node[circle, draw, fill=black!0, inner sep=2pt, minimum width=1pt] (v4) at (0,-5) {};
 	 \node[circle, draw, fill=black!0, inner sep=2pt, minimum width=1pt] (v5) at (0,-6) {};
 	 \node[circle, draw, fill=black!0, inner sep=2pt, minimum width=1pt] (v6) at (0,-7) {};
        
        %labeling of the leaves
        \node[] () at (1.3,0) {\tiny{$000$}};
        \node[] () at (1.3,-1) {\tiny{$001$}};
        \node[] () at (1.3,-2) {\tiny{$010$}};
        \node[] () at (1.3,-3) {\tiny{$011$}};
        \node[] () at (1.3,-4) {\tiny{$100$}};
        \node[] () at (1.3,-5) {\tiny{$101$}};
        \node[] () at (1.3,-6) {\tiny{$110$}};
        \node[] () at (1.3,-7) {\tiny{$111$}};

	    \node[] () at (-4,0.4) {\tiny{$00$}};
	   \node[circle, draw, fill=green!60!black!60, inner sep=2pt, minimum width=2pt] (w1) at (-4,-.5) {};
        \node[] () at (-4,-1.6) {\tiny{$01$}};
 	 \node[circle, draw, fill=yellow!90, inner sep=2pt, minimum width=2pt] (w2) at (-4,-2.5) {}; 
        \node[] () at (-4,-3.6) {\tiny{$10$}};
 	 \node[circle, draw, fill=green!60!black!60, inner sep=2pt, minimum width=2pt] (v1) at (-4,-4.5) {};
        \node[] () at (-4,-5.6) {\tiny{$11$}};
 	 \node[circle, draw, fill=red!60!black!60, inner sep=2pt, minimum width=2pt] (v2) at (-4,-6.5) {};

        \node[] () at (-8,-0.6) {\tiny{$0$}};
 	 \node[circle, draw, fill=blue!80, inner sep=2pt, minimum width=2pt] (w) at (-8,-1.5) {};
        \node[] () at (-8,-4.6) {\tiny{$1$}};
 	 \node[circle, draw, fill=blue!80, inner sep=2pt, minimum width=2pt] (v) at (-8,-5.5) {};	

        \node[] () at (-12,-2.6) {\tiny{root}};
     \node[circle, draw, fill=red!60, inner sep=2pt, minimum width=2pt] (r) at (-12,-3.5) {};

	%---EDGES---	 

 	 \draw[->]   (r) --   (w) ;
 	 \draw[->]   (r) --   (v) ;

 	 \draw[->]   (w) --  (w1) ;
 	 \draw[->]   (w) --  (w2) ;

 	 \draw[->]   (w1) --   (w3) ;
 	 \draw[->]   (w1) --   (w4) ;
 	 \draw[->]   (w2) --  (w5) ;
 	 \draw[->]   (w2) --  (w6) ;

 	 \draw[->]   (v) --  (v1) ;
 	 \draw[->]   (v) --  (v2) ;

 	 \draw[->]   (v1) --  (v3) ;
 	 \draw[->]   (v1) --  (v4) ;
 	 \draw[->]   (v2) --  (v5) ;
 	 \draw[->]   (v2) --  (v6) ;

    \end{tikzpicture}
    \end{subfigure}
    \hspace{0.5cm}
    \begin{subfigure}{}
    \begin{tikzpicture}[thick,scale=0.29]
	\draw (-1,3) -- (17,3) -- (17, -6) -- (-1, -6) -- (-1,3) -- cycle;
	\draw (8,3) -- (8,-6) ;

	%---NODES---	 
 	 \node[circle, draw, fill=black!0, inner sep=1pt, minimum width=1pt] (1v1) at (6.25,0) {$1$};
 	 \node[circle, draw, fill=black!0, inner sep=1pt, minimum width=1pt] (1v2) at (0.25,-4) {$2$};
 	 \node[circle, draw, fill=black!0, inner sep=1pt, minimum width=1pt] (1v3) at (6.25,-4) {$3$};

 	 \node[circle, draw, fill=black!0, inner sep=1pt, minimum width=1pt] (2v2) at (9.5,-4) {$2$};
 	 \node[circle, draw, fill=black!0, inner sep=1pt, minimum width=1pt] (2v3) at (15.5,-4) {$3$};

	%---EDGES---	

 	\draw[->]   (1v1) -- (1v3) ;
    \draw[->]   (1v2) -- (1v3) ;
 	 
	 %---LABELS---
	 \node at (-3,2) {} ;
	 \node at (0.5,2) {$G_{\emptyset}$} ;
	 \node at (10.5,2) {$G_{X_2=0}$} ;
    \end{tikzpicture}
    \end{subfigure}
    \caption{\small{A CStree for $p=3$ and its minimal context DAGs.}}
    \label{fig:example}
\end{figure}

Let $\TT=(V,E)$ be a rooted tree with levels $(L_1,\cdots,L_p)\sim (X_{1},\ldots, X_{p})$, $\LL$ a finite set of labels
 and $\theta:E\to \LL$ a labeling of the edges. The pair $(\TT,\theta)$
 is a \textit{staged tree} if 
 \begin{itemize}
     \item[(1)] $|\theta(E(v))|=|E(v)|$ for all $v\in V$, and
     \item[(2)] for any pair $v,w\in V$, $\theta(E(v))$ and
     $\theta(E(w))$
     are either equal or disjoint.
 \end{itemize}
Two vertices $v,w$ in $(\TT,\theta)$ are in the same stage if and only
if $\theta(E(v))=\theta(E(w))$. In this case we write $v\sim w$. The
equivalence relation $\sim$ on the set $V$ induces
a partition of $V$ called the \textit{staging} of $\TT$. We refer to each set in this partition
as a \emph{stage}. When depicting staged trees, such as in Figure~\ref{fig:example}, we use colors in the vertices
to indicate that two vertices are in the same stage, except for white vertices 
which always represent singleton stages. Intuitively, the vertices in a 
stage $S$ in level $L_{k-1}$ represent outcomes $x_1\cdots x_{k-1}$ for which the conditional
distributions $f(X_k|x_1\cdots x_{k-1}), x_1\cdots x_{k-1}\in S$ are all equal. In all staged trees we consider, the leaves of the tree and the root of the tree are always singleton stages, thus we often omit them when talking about the partition of $V$ into stages.
\begin{definition} \label{def:staged_tree}
Let $\TT$ be a staged tree. The staged tree model $\MM(\TT)$ has the space of parameters
\[
\Theta_{\TT}=\left\{ x\in \R^{|\LL|}\colon \forall e\in E, x_{\theta(e)}\in (0,1) \text{ and } \forall v\in V, \sum_{e\in E(v)} x_{\theta(e)}=1\right\}
\]
 and is defined to be the image of the map
 \[
 \Psi_{\TT}\colon \Theta_{\TT}\to \Delta_{|\RR|-1}^{\circ},\quad x\mapsto \left(\prod_{e\in E(\mathrm{root}\to \xx)}x_{\theta(e)}\right)_{\xx\in\RR}
 \]
 where $E(\mathrm{root}\to\xx)$ denotes the set of all edges on the path from the root to $\xx\in \RR$.
 We say a distribution $f\in \Delta_{|\RR|-1}^{\circ}$ factors according to $\TT$ if $f\in \mathrm{im}(\Psi_{\TT})$.
\end{definition}
\begin{example} \label{ex:astagedtreemodel}
    Consider the staged tree depicted in Figure~\ref{fig:example} (left) with levels $(L_1,L_2,L_3)\sim (X_1,X_2,X_3)$. It represents
    the event tree for a vector $(X_1,X_2,X_3)$
    of binary random variables with ordering $\pi=123$.
    The set of stages in this tree is
    $\{\{0,1\},\{00,10\},\{01\},\{11\}\}$.
    The stage $\{0,1\}$ is the set of blue nodes and the stage $\{00,10\}$
    is the set of green nodes in Figure~\ref{fig:example}. The blue stage encodes
    the equality $f(X_2|X_1=0)=f(X_2|X_1=1)$ and the green
    stage encodes the equality $f(X_3|X_{12}=00)=f(X_3|X_{12}=10)$. These equalities 
    of conditional probabilities correspond to the set of CSI
    statements $\CC=\{ X_2\independent X_1,X_3\independent X_1|X_2=0\}$. In this case
    $\MM(\TT)=\V(I_\CC)\cap \Delta_{7}^{\circ}$.
\end{example}

Since CI statements are collections of CSI statements when the context $C$ in Eq. (\ref{eqn:associated-polynomial-to-CSI-statements}) is empty, it is natural to see that DAG models are a particular type of staged tree model. In particular, every DAG model has a staged tree representation; see Example~\ref{ex:stagedtree_ofDAG} and \cite[Section 2.1]{DS21}. 
%Furthermore, all DAG models are also CStree models, defined in the next section.
The Definition~\ref{def:staged_tree} of staged tree model
allows for very flexible types of stagings. In the
next section we define CStree models which are a class of staged tree models for which the staging has to satisfy additional conditions. 

\subsection{CStrees}\label{sec:cstrees}
In this section we introduce CStree models as a subclass of staged tree models.
\begin{definition} \label{def:cstree}
The staged tree $(\TT,\theta)$ is a \textit{CStree} if
 \begin{itemize}
   \item[(1)] $\theta(E(v))\neq \theta(E(w))$ if $v,w$ are in different levels,
   \item[(2)] $\theta(x_{1}\cdots x_{k-1}\to x_{1}\cdots x_{k-1}x_{k})=\theta(y_1\cdots y_{k-1}\to y_{1}\cdots y_{k-1}x_{k})$
     whenever the nodes $x_{1}\cdots x_{k-1}$ and $y_{1}\cdots y_{k-1}$ are in the same stage.
   \item[(3)] For every stage $S\subset L_{k-1}, k\in [p]$, there exists $C\subset [k-1]$ and $\xx_C\in \RR_{C}$ such that 
   \[S=\bigcup_{\yy\in \RR_{[k-1]\setminus C}}\{\xx_{C}\yy\}.\]
 \end{itemize}
\end{definition}
Condition (1) ensures that two nodes in different levels do not share
the same conditional distribution. Condition (2) forces that edges with the
same label must point to the same outcome of $X_k$. Condition $(3)$ restricts the types of CSI statements that can be encoded simultaneously in a CStree to be those described in Lemma~\ref{independence-statements-lemma}. 
  
To describe a CStree model $\MM(\TT)$ as an algebraic variety intersected with the open probability simplex we consider the ring homomorphism
\begin{align}\label{eq:psi-t}
    \psi_\TT\colon \R[D]\to\R[\Theta_\TT],\quad p_{\xx}\mapsto \prod_{e\in E(\text{root}\to \xx)}\theta(e)
\end{align}
where $\R[\Theta_{\TT}]:=\R[\theta(e)\colon e\in E]/\langle \theta-1 \rangle$ and $\langle \theta-1 \rangle:=\langle \sum_{e\in E(v)}\theta(e)-1:v\in V\rangle$ is the ideal representing the sum-to-one conditions on the parameter space. 
The ring map $\psi_{\TT}$ is the pullback of the parametrization $\Psi_{\TT}$ of the model $\MM(\TT)$ in Definition~\ref{def:staged_tree}. Using $\psi_{\TT}$ , we can write the CStree model as
$$\MM(\TT)=\V(\ker(\psi_{\TT}))\cap \Delta_{|\RR|-1}^{\circ}.$$
\begin{example}(CStree representation of a DAG model) \label{ex:stagedtree_ofDAG}
    Let $G=([p],E)$ be a DAG. The model $\MM(G)$
    is the set of all distributions in $\Delta_{|\RR|-1}^{\circ}$ that satisfy the recursive factorization property according to $G$. Following \cite[Section 2.1]{DS22}, to represent $\MM(G)$ as a CStree model we first fix an ordering
    $\pi$ of $[p]$ and then consider the
    tree $\TT_{G}$ with levels $(L_1,\ldots,L_p)\sim (X_{\pi_1},\ldots,X_{\pi_p})$. It remains to specify
    the labelling of $(\TT_{G},\theta)$. The labelling is completely determined
    by the staging, hence we specify the stages
     for each level $L_{k-1}$
    where $k\in [p]$. Fix $k\in [p]$, then $L_{k-1}$ has one
    stage $S_{\xx_{\pa(k)}}=\{\xx_{\pa(k)}\yy: \yy\in \RR_{[k-1]\setminus \pa(k)}\}$ for each outcome $\xx_{\pa(k)}\in \RR_{\pa(k)}$. Note that for each $k\in [p]$, and for each $\xx_{\pa(k)}\in \RR_{\pa(k)}$, $S_{\xx_{\pa(k)}}$
    satisfies condition (3) in the definition of a CStree, hence
    $(\TT_{G},\theta)$ is a CStree and $\MM(G)=\MM(\TT_G)$. We highlight the difference between these two combinatorial representations of the same model; while $G$ is a DAG whose vertices are random variables, $\TT_{G}$ is a directed tree whose leaves are the outcome space of the joint distribution of the random variables, which are the nodes, in $G$. The model restrictions in  $G$ are encoded via absence of edges, the same model restrictions in $\TT_G$ are encoded via stages. The map
    $\Psi_{\TT_G}$ from Definition~\ref{def:staged_tree} is
    equal to the recursive factorization according to $G$
    in the beginning of Section~\ref{sec:dags-and-staged-trees}.
    To illustrate this
    construction, consider the DAG model from Example~\ref{ex:adag}.
    In this case the ordering of the binary random variables is $(X_1,X_2,X_3)$ and the tree $\TT_{G}$ (disregarding the coloring of the vertices)
    is represented in Figure~\ref{fig:example}.
    The stages of $\TT_{G}$ in level $L_1$ are $\{0\},\{1\}$ and in $L_2$
    two these are $\{00,10\},\{01,11\}$.
\end{example}
 \begin{remark}\label{rmk:DAG-representation-remark}
For a DAG $G$, the map $\psi_{\TT_G}$ is the algebraic version of the well-known recursive factorization according to $G$.
 It is important to note that the ideal $I_{\glo(G)}$ is not always equal to the prime ideal
$\ker(\psi_{\TT_G})$. The article \cite[]{GSS05} contains several examples where equality holds as well as numerous counterexamples. The strongest
possible algebraic characterization of a model is to find the generators for 
$\ker(\psi_{\TT})$. For most graphical models, discrete and Gaussian, it is an open question to find generators of $\ker(\psi_{\TT})$. A recent overview of the state of the art is presented in \cite[Chapter 3]{maathuis:2018}. For discrete decomposable DAG models such a characterization
can be found in \cite[Theorem 4.4]{GMS06}. Our Theorem~\ref{thm:algebra-saturated-statements} characterizes $\ker(\psi_{\TT})$ in terms of CSI statements for all balanced CStree models as defined in \ref{def:balanced}.
 \end{remark}

 The next lemma describes the type of CSI statements encoded by a CStree; they are a consequence of condition $(3)$ in Definition~\ref{def:cstree}.
 
 \begin{lemma}{\cite[Lemma 3.1]{DS22}}\label{independence-statements-lemma}
 Let $\TT$ be a CStree with levels $(L_1,\ldots,L_p)\\ \sim(X_1,\ldots,X_p)$. Then for any $f\in\MM(\TT)$ and stage $S\subset L_{k-1}$,  condition $(3)$ in Definition~\ref{def:cstree} implies that $f$ entails the CSI statement $X_k\independent X_{[k-1]\setminus C}|X_{C}=\xx_C$ where $C$ is the set of all indices $\ell$ such that all elements in $S$ have the same  outcome for $X_\ell$. Hence, $\xx_C=\yy_C$
 for any $\yy\in S$.
 \end{lemma}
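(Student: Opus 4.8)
The plan is to unwind the definitions and reduce the claim to a statement purely about which coordinates of a joint distribution agree after conditioning. Fix a stage $S\subset L_{k-1}$ and let $C\subset[k-1]$ be the set of indices $\ell$ such that every element of $S$ shares a common value of $X_\ell$; condition $(3)$ of Definition~\ref{def:cstree} guarantees such a $C$ exists and moreover that $S=\bigcup_{\yy\in\RR_{[k-1]\setminus C}}\{\xx_C\yy\}$ for the common outcome $\xx_C$, i.e. $S$ is exactly the fiber over $\xx_C$ in $\RR_{[k-1]}$. The first step is to recall from Definition~\ref{def:staged_tree} that for $f\in\MM(\TT)$ and any node $v=v_1\cdots v_{k-1}\in L_{k-1}$, the probability of reaching $v$ factors as $f(\xx_{[k-1]}=v)=\prod_{e\in E(\mathrm{root}\to v)}x_{\theta(e)}$, and that the conditional $f(X_k=x_k\mid X_{[k-1]}=v)$ equals the single parameter $x_{\theta(e)}$ attached to the edge $v\to v x_k$.

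Next I would use conditions $(1)$ and $(2)$ of the staged tree / CStree definition together with the fact that $v,w\in S$ implies $v\sim w$. Since $v\sim w$ we have $\theta(E(v))=\theta(E(w))$, and by condition $(2)$ of Definition~\ref{def:cstree} the edge labels are matched up so that $\theta(v\to vx_k)=\theta(w\to wx_k)$ for each outcome $x_k\in[d_k]$. Hence the parameter governing the $k$th coordinate is literally the same for all nodes in $S$, which gives $f(X_k=x_k\mid X_{[k-1]}=v)=f(X_k=x_k\mid X_{[k-1]}=w)$ for all $v,w\in S$ and all $x_k$. The third step is to translate this equality-of-conditionals-over-a-fiber statement into the CSI statement $X_k\independent X_{[k-1]\setminus C}\mid X_C=\xx_C$: because $S$ is precisely the set of $\xx_{[k-1]}$ with $\xx_C=\xx_C$, the common conditional value depends only on $\xx_C$ and not on the coordinates indexed by $[k-1]\setminus C$; averaging over $X_k,\dots$ downstream if needed and unwinding the definition in Section~\ref{subsec:csis} of $X_k\independent X_{[k-1]\setminus C}\mid X_C=\xx_C$ (namely $f(\xx_{[k-1]\setminus C}\mid x_k,\xx_C)=f(\xx_{[k-1]\setminus C}\mid \xx_C)$, equivalently $f(x_k\mid \xx_{[k-1]\setminus C},\xx_C)=f(x_k\mid \xx_C)$) yields exactly the claimed relation.

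The main obstacle, such as it is, is bookkeeping rather than conceptual: one must be careful that the CSI statement as defined in Section~\ref{subsec:csis} is phrased in terms of the marginal joint probabilities $f(\xx_A\xx_B\xx_S\xx_C)$ of $X_{[p]}$, whereas the staging directly controls only the one-step conditionals $f(X_k\mid X_{[k-1]})$; so I would either invoke the recursive factorization to pass between the two, or simply cite that this is precisely the content of \cite[Lemma 3.1]{DS22}, which is the statement being reproduced here. Since the lemma is quoted verbatim from \cite{DS22}, the cleanest write-up is to give the short derivation above as a sketch and defer to that reference for the routine verification.
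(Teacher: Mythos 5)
Your sketch is correct and is essentially the standard argument: by conditions (1)–(2) of Definition~\ref{def:cstree} all nodes of the stage $S$ carry the same labelled edges into level $k$, so the one-step conditional $f(X_k\mid X_{[k-1]}=v)$ (which, via the sum-to-one conditions, is exactly the edge parameter) is constant on $S$, and since condition (3) identifies $S$ with the fiber over $\xx_C$, averaging over $\RR_{[k-1]\setminus C}$ gives $f(x_k\mid \xx_{[k-1]\setminus C},\xx_C)=f(x_k\mid\xx_C)$, i.e.\ the stated CSI relation. The paper itself offers no proof, quoting the result from \cite[Lemma 3.1]{DS22}, and your derivation matches the verification carried out there, so deferring the routine bookkeeping to that reference is appropriate.
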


For any stage $S$ the context $\cc$ in Lemma~\ref{independence-statements-lemma} is called the \emph{stage-defining context} of the stage $S$. Given a stage defining context $\cc$ for a stage $S$ in level $L_{k-1}$ we recover the stage from the statement $X_k\independent X_{[k-1]\setminus C}|X_{C}=\xx_C$ as $S=\bigcup_{\yy\in \RR_{[k-1]\setminus C}}\{\xx_{C}\yy\}$.

\subsection{CStrees as collections of context DAGs} \label{subsec:contextDAGs}
An important question in the study of conditional independence statements
is to understand when a combination of CI statements implies additional
CI statements. The rules to deduce new statements are called 
conditional independence axioms \cite[Ch.4]{S19}. For instance, applying
these axioms to the local Markov property that holds for a DAG $G$ results in new
CI statements that hold for $G$ and the global Markov property encompasses all
CI statements that hold for $G$. The same question arises in the study
of CSI statements. Namely, what are all CSI statements that are implied
by a given set of CSI statements? The rules to deduce new CSI
statements from existing ones are the context-specific independence axioms, or CSI axioms for short. We use here the CSI axioms presented in
\cite[Section 3.2]{DS22} and point out that an axiomatic study of
CSI statements was carried out before in \cite[]{corander:2016}.
\begin{enumerate}
	\item \emph{symmetry.} 
 $X_A\independent X_B \mid X_S, X_C = \xx_C$ $\Longrightarrow$ $X_B\independent X_A \mid X_S, X_C = \xx_C$.
	\item \emph{decomposition.} 
 $ X_A\independent X_{B\cup D} \mid X_S, X_C = \xx_C$ $\Longrightarrow$  $ X_A\independent X_B \mid X_S, X_C = \xx_C$.
	\item \emph{weak union.} If $ X_A\independent X_{B\cup D} \mid X_S, X_C = \xx_C$ $\Longrightarrow$  $ X_A\independent X_B \mid X_{S\cup D}, X_C = \xx_C$. 
	\item \emph{contraction.} If $ X_A\independent X_B \mid X_{S\cup D}, X_C = \xx_C$ and $ X_A\independent X_D \mid X_S, X_C = \xx_C$ $\Longrightarrow$  $ X_A\independent X_{B\cup D} \mid X_S, X_C = \xx_C$. 
	\item \emph{intersection.} If $ X_A\independent X_B \mid X_{S\cup D}, X_C = \xx_C$ and $ X_A\independent X_S \mid X_{B\cup D}, X_C = \xx_C$ $\Longrightarrow$  $ X_A\independent  X_{B\cup S} \mid X_D, X_C = \xx_C$.  
	\item \emph{specialization.} If $ X_A\independent X_B \mid X_S, X_C = \xx_C$, $T\subseteq X_S$ and $\xx_T\in\RR_T$, $\Longrightarrow$  $ X_A\independent X_B \mid X_{S\setminus T}, X_{T\cup C} = \xx_{T\cup C}$.
	\item \emph{absorption.} If $ X_A\independent X_B \mid X_S, X_C = \xx_C$, and there exists $T\subseteq C$ for which $ X_A\independent X_B \mid X_S, X_{C\setminus T} = \xx_{C\setminus T}, X_T = \xx_T$ for all $\xx_T \in \RR_T$, $\Longrightarrow$  $ X_A\independent X_B \mid X_{S\cup T}, X_{C\setminus T} = \xx_{C\setminus T}$. 
\end{enumerate}
The first five axioms are a direct generalization
of the CI axioms. The specialization axiom (6) says that whenever you have a vector $X_S$ in the conditioning set you can specialize it
to a CSI relation by choosing an outcome $\xx_T\in \RR_{T}$.
The absorption axiom (7) is the opposite of specialization, it says
that if you have a collection of CSI statements such that,
in the conditioning contexts, for a certain subset $T$, all outcomes
of $X_T$ are present, then this turns into a CSI statement including
the vector $X_T$ in the conditioning set.
\begin{example}
    Consider a DAG $G=([p],E)$ and $k\in [p]$. In the staged tree
    representation $\TT_G$ of $G$ presented in Example~\ref{ex:stagedtree_ofDAG}, for each $k\in [p]$ there is one stage in level $L_{k-1}$ for each outcome $\xx_{\pa(k)}\in X_{\pa(k)}$.
    Using Lemma~\ref{independence-statements-lemma},
    such stage entails the CSI statement $X_k\independent X_{[k-1]\setminus \pa(k)}|X_{\pa(k)}=\xx_{\pa(k)}$. Thus
    the stages of $\TT_G$ in level $L_{k-1}$ correspond to
    the set of CSI statements $\{X_k\independent X_{[k-1]\setminus \pa(k)}|X_{\pa(k)}=\xx_{\pa(k)}:\xx_{\pa(k)}\in \RR_{\pa(k)}\}$.
    Using the absorption axiom this implies the CI statement
    $X_k\independent X_{[k-1]\setminus \pa(k)}|X_{\pa(k)}$. Note that
    in turn, each statement of the form $X_k\independent X_{[k-1]\setminus \pa(k)}|X_{\pa(k)}=\xx_{\pa(k)}$ is a specialization of
    $X_k\independent X_{[k-1]\setminus \pa(k)}|X_{\pa(k)}$.
\end{example}
Let $\J(\TT)$ be the set of all CSI statements implied by applying the CSI axioms \cite[Section 3.2]{DS22} to the statements in Lemma~\ref{independence-statements-lemma}. 
By the absorption axiom \cite[Lemma 3.2]{DS22}, there exists a collection $\CC_\TT:=\{\cc\}$ of contexts such that for any $X_A\independent X_B|X_S, \cc\in\J(\TT)$ with $\cc\in\CC_\TT$, there is no subset $T\subseteq C$ for which $X_A\independent X_B|X_{S\cup T}, X_{C\setminus T}=\xx_{C\setminus T}\in\J(\TT)$. We call such $\cc\in\CC_\TT$ a \textit{minimal context} of $\TT$. Also, from \cite[Lemma 3.2]{DS22}, it follows that$$\J(\TT)=\bigcup_{\cc\in\CC_\TT}\J(\cc)$$ where $\J(\cc)$ is the set of all CI statements of the form $X_A\independent X_B|X_S$ that hold in the context $\cc$. This equality establishes that the set of 
all CSI statements that are implied by $\TT$ using the CSI axioms,  is equal to the union of all the
statements that are implied, using the CSI axioms,  by the set of
statements in each of the minimal contexts.

We now construct a DAG associated to each minimal context $\cc\in\CC_\TT$ using a \emph{minimal} I-MAP \cite[Section 3.4.1]{KF09}. We say that a DAG $H$ is an I-MAP for
a set of CI statements $\I$ if the set of all CI statements implied 
by $G$,
denoted $\I(G)$, is contained in $\I$, \cite[Section 3.2.3]{KF09}. A DAG $H$ is a minimal
I-MAP for $\I$ if $H$ is an I-MAP of $\I$ and if the removal of a single
edge from $H$ renders it not an I-MAP. To each such $\cc\in\CC_\TT$, we can associate a \textit{minimal context DAG} with set of ordered nodes $[p]\setminus C$,
denoted by $G_{\cc}$, via a minimal I-MAP  of $\J(\cc)$ \cite[Section 3.2]{DS22}; this ordering should be a restriction of the ordering of the variables $X_{[p]}$
in $\TT$.  A construction of a minimal I-MAP is explained in \cite[Algorithm 3.2]{KF09}.
%An example of a CStree and its corresponding collection of context DAGs is depicted in Figure~\ref{fig:example}.
\begin{example}\label{ex:contextDAGsIMAP}
Consider the binary CStree $\TT$ on three random variables with ordering
$123$ given in Figure~\ref{fig:example}.
Since the two nodes in level $L_1$ are in the same stage (represented by the same colors), this CStree implies the CI statement $X_1\independent X_2$. As the nodes $00$ and $10$ are in the same stage in level $L_2$, but $01$ and $11$ are not, we get the CSI statement $X_3\independent X_1|X_2=0$. Therefore, $\J(\TT)=\{X_1\independent X_2,\, X_1\independent X_3|X_2=0\}$. Especially, we see that the set of minimal contexts $\CC$
is $\{\emptyset,X_2=0\}$ and $\J(\TT)=\J(\emptyset)\cup\J(X_2=0)$ where
$\J(\emptyset)=\{X_1\independent X_2\}$ and $\J(X_2=0)=\{X_1\independent X_3|X_2=0\}$. The minimal I-MAP of $\J(\emptyset)$ is a DAG with nodes $\{1,2,3\}$
and ordering $123$ which entails $X_1\independent X_2$. This is exactly
the DAG $G_{\emptyset}$ in Figure~\ref{fig:example}. Similarly,
the minimal I-MAP of $\J(X_2=0)$ is a DAG on the set of nodes
$\{1,3\}$, with ordering $13$, that entails $X_1\independent X_3$; this
 DAG has two nodes and no edge between them and is displayed as $G_{X_2=0}$ in Figure~\ref{fig:example}.
\end{example}

Each context DAG $G_{X_{C}=\xx_C}$ is in particular a DAG, thus by Example~\ref{ex:stagedtree_ofDAG}, it has a staged tree representation which we denote by $\TT_{G_{X_{C}=\xx_C}}$.
To relate $\TT_{G_{X_{C}=\xx_C}}$ to the original tree $\TT$, we define a
\textit{context subtree}
% \textcolor{cyan}{context-specific subtree (dont we call this context subtree?) Not sure but it definitely sounds better not to put too many adjectives } 
$\TT_{X_{C}=\xx_C}$ for each context $X_{C}=\xx_C$ .
Let $x_1\cdots x_k\in \TT$ and denote by $\TT_{x_1\cdots x_k}$ the directed subtree of $\TT$ with root node $x_1\cdots x_k$. 
For $C\subset [p]$ and $\xx_{C}\in \RR_{C}$ we construct the context subtree $\TT_{X_{C}=\xx_C}=(V_{X_{C}=\xx_{C}},E_{X_{C}=\xx_{C}})$ by
deleting all subtrees $\TT_{x_1\cdots x_k}$ and all edges $x_1\cdots x_{k-1}\to x_1\cdots x_{k}$ with $x_{k}\neq \xx_{C\cap{k}}$, and then contracting the edges
$x_1\cdots x_{k-1}\to x_1\cdots x_{k-1}(\xx_{C})_k$ for all
$x_1\cdots x_{k-1}\in \RR_{[k-1]}$, for all $k\in C$. The single node resulting from this contraction is labeled $x_1\cdots x_{k-1}(\xx_C)_k$ and it is
in the same stage as 
$x_1\cdots x_{k-1}\xx_{C\cap\{k\}}$
in $\TT$. All of the other nodes in $\TT_{X_{C}=\xx_{C}}$ inherit their staging from $\TT$. Note that
the context subtree $\TT_{\cc}$ is itself a CStree and $\TT_{x_1\cdots x_k}$ is the context subtree $\TT_{X_{[k]}=x_1\cdots x_{k}}$.

Moreover, let $\cc$ be a minimal context of $\TT$. Then by the construction of $\TT_{\cc}$ 
%and $G_{\cc}$, we have
the CSI statements that hold in $\TT_{\cc}$ are given as
\[
\J(\TT_{\cc})=\{X_A\independent X_B|X_S, X_D=\xx_D\colon X_A\independent X_B|X_S, X_D=\xx_D, \cc\in\J(\TT)\}
\]
with $A,B,S,D\subset [p]\setminus C$.
Similarly, the CI statements on $X_{[p]\setminus C}$ implied by the DAG $G_{\cc}$ are
\[
\J(\cc)=\{X_A\independent X_B|X_S\colon X_A\independent X_B|X_S, \cc\in\J(\TT)\}
\]
which are precisely the CI statements valid in $\TT_{\cc}$.
This shows that the CI statements implied by the DAG $G_{\cc}$ are exactly the CI statements implied by the CStree $\TT_{\cc}$.
In general, the CStrees $\TT_{G_{X_{C}=\xx_C}}$ and $\TT_{X_C=\xx_C}$ are
different, since the CStree $\TT_{X_C=\xx_C}$ may imply more CSI statements, see Example~\ref{ex:small-context-tree-and-dag-example} and Example~\ref{ex:context_subtree}.
 If $\varnothing\in \CC_{\TT}$ then $G_{\varnothing}$
is a DAG that captures the CI relations implied
by $\TT$. When $\varnothing\notin C_{\TT}$, then $\TT$ entails no CI
relations, in this case we associate to $\TT$ the complete DAG
on $[p]$ nodes whose directed arrows are in agreement with the
causal ordering of $\TT$, we also denote this DAG by $G_\varnothing$. 
\begin{example} \label{ex:small-context-tree-and-dag-example}
    We illustrate the construction of the context subtree $\TT_{\cc}$ and
    the staged tree representation of a context DAG $\TT_{G_{\cc}}$ using
    the CStree in Example~\ref{ex:contextDAGsIMAP}. For the empty minimal context,  $\TT_{\emptyset}$ is equal to the
    original tree $\TT$. However, $\TT_{G_{\emptyset}}$
    is a CStree with the same vertices and edges of $\TT$ but
    with set of stages equal to $\{%\{\mathrm{root}\},
    \{0,1\},\{00\},\{01\},\{10\},\{11\}\}$. Importantly, $\TT_{\emptyset}\neq \TT_{G_{\emptyset}}$. For the minimal context $X_2=0$,
    $\TT_{X_2=0}$ is a staged tree with two levels whose
    staging is the same as the staging of the tree $\TT_{G_{X_2=0}}$ on two
    levels associated to $G_{X_2=0}$.
\end{example}

\begin{example}
For the sake of intuition we present an example in Figure~\ref{fig:not_CStree} of a collection of context DAGs that do \textit{not} define a CStree. Consider the two DAGs $G_\emptyset=([3],\{2\to 3\})$ and $G_{X_1=0}=(\{2,3\},\emptyset)$ and assume all random variables are binary. These two DAGs imply the CI relation $X_1\independent X_{2,3}$ and the CSI relation $X_2\independent X_3|X_1=0$.

Let $\TT$ be a staged tree with levels $(L_1,L_2,L_3)\sim (X_1,X_2,X_3)$. We will see that the staging of $\TT$ cannot be a CStree. The vertices $10$ and $00$ are in the same stage because in the empty context DAG, $G_{\varnothing}$,
$f(X_3|X_{1,2}=10)=f(X_3|X_{1,2}=00)$. Moreover, since $X_2\independent X_3 |X_1=0$, we also have
$f(X_3|X_{1,2}=00)=f(X_3|X_{1,2}=01)$, thus
 $00$ and $01$ are also in the same stage. This implies $10,00,01$ are all in the same stage, which by definition of CStree implies that so are all vertices $00$, $01$, $10$, $11$. Thus the CI statement $X_3\independent X_{1,2}$ holds in the CStree. However, this statement is not implied by the two DAGs. 
\end{example}

\begin{figure}
\centering
    \begin{subfigure}{}
    \begin{tikzpicture}[thick,scale=0.3]

 	 \node[circle, draw, fill=black!0, inner sep=2pt, minimum width=1pt] (w3) at (0,0)  {};
 	 \node[circle, draw, fill=black!0, inner sep=2pt, minimum width=1pt] (w4) at (0,-1) {};
 	 \node[circle, draw, fill=black!0, inner sep=2pt, minimum width=1pt] (w5) at (0,-2) {};
 	 \node[circle, draw, fill=black!0, inner sep=2pt, minimum width=1pt] (w6) at (0,-3) {};
 	 \node[circle, draw, fill=black!0, inner sep=2pt, minimum width=1pt] (v3) at (0,-4)  {};
 	 \node[circle, draw, fill=black!0, inner sep=2pt, minimum width=1pt] (v4) at (0,-5) {};
 	 \node[circle, draw, fill=black!0, inner sep=2pt, minimum width=1pt] (v5) at (0,-6) {};
 	 \node[circle, draw, fill=black!0, inner sep=2pt, minimum width=1pt] (v6) at (0,-7) {};

	   \node[circle, draw, fill=green!60!black!60, inner sep=2pt, minimum width=2pt] (w1) at (-4,-.5) {};
 	 \node[circle, draw, fill=green!60!black!60, inner sep=2pt, minimum width=2pt] (w2) at (-4,-2.5) {}; 
 	 \node[circle, draw, fill=green!60!black!60, inner sep=2pt, minimum width=2pt] (v1) at (-4,-4.5) {};
 	 \node[circle, draw, fill=red!60!black!60, inner sep=2pt, minimum width=2pt] (v2) at (-4,-6.5) {};	 
		
 	 \node[circle, draw, fill=blue!80, inner sep=2pt, minimum width=2pt] (w) at (-8,-1.5) {};
 	 \node[circle, draw, fill=blue!80, inner sep=2pt, minimum width=2pt] (v) at (-8,-5.5) {};	

     \node[circle, draw, fill=red!60, inner sep=2pt, minimum width=2pt] (r) at (-12,-3.5) {};

	%---EDGES---	 

 	 \draw[->]   (r) --   (w) ;
 	 \draw[->]   (r) --   (v) ;

 	 \draw[->]   (w) --  (w1) ;
 	 \draw[->]   (w) --  (w2) ;

 	 \draw[->]   (w1) --   (w3) ;
 	 \draw[->]   (w1) --   (w4) ;
 	 \draw[->]   (w2) --  (w5) ;
 	 \draw[->]   (w2) --  (w6) ;

 	 \draw[->]   (v) --  (v1) ;
 	 \draw[->]   (v) --  (v2) ;

 	 \draw[->]   (v1) --  (v3) ;
 	 \draw[->]   (v1) --  (v4) ;
 	 \draw[->]   (v2) --  (v5) ;
 	 \draw[->]   (v2) --  (v6) ;

    \end{tikzpicture}
    \end{subfigure}
    \hspace{0.5cm}
    \begin{subfigure}{}
    \begin{tikzpicture}[thick,scale=0.29]
	\draw (-1,3) -- (17,3) -- (17, -6) -- (-1, -6) -- (-1,3) -- cycle;
	\draw (8,3) -- (8,-6) ;

	%---NODES---	 
 	 \node[circle, draw, fill=black!0, inner sep=1pt, minimum width=1pt] (1v1) at (6.25,0) {$1$};
 	 \node[circle, draw, fill=black!0, inner sep=1pt, minimum width=1pt] (1v2) at (0.25,-4) {$2$};
 	 \node[circle, draw, fill=black!0, inner sep=1pt, minimum width=1pt] (1v3) at (6.25,-4) {$3$};

 	 \node[circle, draw, fill=black!0, inner sep=1pt, minimum width=1pt] (2v2) at (9.5,-4) {$2$};
 	 \node[circle, draw, fill=black!0, inner sep=1pt, minimum width=1pt] (2v3) at (15.5,-4) {$3$};

	%---EDGES---	

    \draw[->]   (1v2) -- (1v3) ;
 	 
	 %---LABELS---
	 \node at (-3,2) {} ;
	 \node at (0.5,2) {$G_{\emptyset}$} ;
	 \node at (10.5,2) {$G_{X_1=0}$} ;
    \end{tikzpicture}
    \end{subfigure}
    \caption{Staged tree that is not a CStree.}
    \label{fig:not_CStree}
\end{figure}
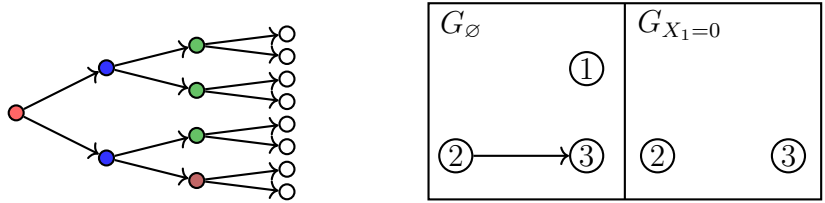

\subsection{Balanced CStrees}
\label{subsec:dCSmodels}

Decomposable graphical models are a set of graphical models for which the undirected and directed Markov properties coincide. These are characterized in many different ways: combinatorially as chordal UGs or as perfect DAGs, and geometrically as those DAG models that are discrete exponential families \cite[]{geiger:2001}, also known as toric models in the algebraic statistics literature \cite[]{S19}. 
The article \cite[]{DS22} suggests the family of balanced staged tree models as a suitable generalization of decomposable DAG models because these models are discrete exponential families. Furthermore, a DAG is perfect if and only if its CStree representation is balanced. Thus we identify the class of balanced CStrees as a good candidate for decomposable models in the context-specific setting. Our main goal is to explore to which extent the 
properties of decomposable DAG models carry over to the context-specific case.

\begin{definition}
\label{def:perfect}
A DAG $G=([p],E)$ is \emph{perfect} if the skeleton of the induced subgraph on the vertices $\pa(k)$ is a complete graph for all $k\in [p]$.

\end{definition}

There are several equivalent ways to define a perfect DAG. Another way to characterize a perfect DAG $G$ is to require that its skeleton is chordal and there is no triple $u,v,w$ of vertices such that $u\to w, v\to w$ are edges in $G$ but $u$ and $v$ are not adjacent. One can also characterize a perfect DAG via its moral graph, see \cite[]{L96}.

Let $G$ be a DAG and let $\TT_G$ be the staged tree representation of $G$. 
%The ideal $\ker(\psi_{\TT_{G}})$ is toric (i.e. prime and generated by binomials) if $G$ is a perfect DAG.
A characterization of perfect DAGs is also available via balanced CStrees.

\begin{definition} \label{def:balanced}
Let $\TT$ be a CStree. For any vertex $v=x_1\cdots x_{k-1}\in \TT$ we define the \textit{interpolating polynomial} at $v$ by
\[
t(v):=\sum_{\zz\in\RR_{[p]\setminus [k-1]}}\left(\prod_{e\in E(v\to v\zz)}\theta(e)\right) \,\,\in \R[\theta(e)\colon e\in E].
\]
A pair of vertices $v=x_1\cdots x_{k-1}$ and $w=y_1\cdots y_{k-1}$ in the same stage is \textit{balanced} if for all $s,r\in[d_k]$, we have the equality
$$t(vs)t(wr)=t(vr)t(ws)$$
in the polynomial ring $\R[\theta(e)\colon e\in E]$.
The tree $\TT$ is \textit{balanced} if every pair of vertices in the same stage is balanced.
\end{definition}
\begin{remark}
The polynomial $t(\mathrm{root})$  in the previous definition is called the \textit{interpolating polynomial} of $\TT$. Such polynomial is useful to study equivalence classes of staged tree models \cite[]{GS18} and enumerating the trees in the equivalence class \cite[]{GBRS18} of any given staged tree. 
\end{remark}

\begin{theorem}\cite[Theorem 3.1]{DS21}\label{thm:perfect-iff-balanced-iff-decomposable}
The DAG $G$ is perfect if and only if $\TT_{G}$ is balanced if and only if $\MM(G)$ is decomposable.
\end{theorem}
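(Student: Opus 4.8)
We first reduce the statement. The equivalence ``$G$ is perfect $\iff\MM(G)$ is decomposable'' is the classical characterization of decomposable graphical models: a DAG model is Markov equivalent to a chordal undirected graphical model precisely when the DAG is perfect (see \cite[Ch.~3,4]{L96}, \cite[]{geiger:2001}); combined with $\MM(G)=\MM(\TT_G)$ from Example~\ref{ex:stagedtree_ofDAG}, the only new content is ``$G$ perfect $\iff\TT_G$ balanced'', which I would prove directly and combinatorially. Throughout I write $\theta^{(j)}(z_j\mid\zz_{\pa(j)})$ for the label of the edge of $\TT_G$ from $z_1\cdots z_{j-1}$ to $z_1\cdots z_j$; by the construction of $\TT_G$ this label depends only on $z_j$ and on $\zz_{\pa(j)}$, and by Definition~\ref{def:cstree}(1) the labels at distinct levels use disjoint indeterminates of $\R[\theta(e)\colon e\in E]$.

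For ``perfect $\Rightarrow$ balanced'' the plan is as follows. Fix $k$, let $v=x_1\cdots x_{k-1}$ and $w=y_1\cdots y_{k-1}$ lie in a common stage of $L_{k-1}$, i.e. $x_i=y_i$ for all $i\in\pa(k)$, fix $s\in[d_k]$, and expand
\[
t(vs)=\sum_{\zz\in\RR_{\{k+1,\dots,p\}}}\ \prod_{j=k+1}^{p}\theta^{(j)}\bigl(z_j\mid(\xx_v\,s\,\zz)_{\pa(j)}\bigr).
\]
The crucial step is to prove, from perfectness, the two propagation facts: taking a shortest directed path $k\to\cdots\to j$ and repeatedly using that each parent set along it is a clique together with the topological order, one gets (a) if $j\in\des_G(k)$ and $\ell\in\pa(j)$ with $\ell>k$ then $\ell\in\des_G(k)$, and (b) if $j\in\des_G(k)$ and $\ell\in\pa(j)$ with $\ell<k$ then $\ell\in\pa(k)$. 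These yield $\pa(j)\subseteq\pa(k)\cup\{k\}\cup\des_G(k)$ for $j\in\des_G(k)$, while for $j>k$ with $j\notin\des_G(k)$ one has $\pa(j)\cap(\{k\}\cup\des_G(k))=\varnothing$. Summing the two corresponding blocks of $\zz$ separately then factors $t(vs)=P(x_1\cdots x_{k-1})\cdot Q_s$, where $Q_s$ depends on $v$ only through $(x_i)_{i\in\pa(k)}$ and $P$ does not involve $s$. Since $v$ and $w$ agree on $\pa(k)$, the factor $Q_s$ is the same for both, and therefore $t(vs)\,t(wr)=P(x_1\cdots x_{k-1})P(y_1\cdots y_{k-1})\,Q_sQ_r=t(vr)\,t(ws)$ for all $s,r$; hence every pair of vertices in a common stage is balanced.

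For the converse I would argue the contrapositive. If $G$ is not perfect, pick $k$ and non-adjacent $a<b$ in $\pa(k)$; then $a\notin\pa(b)$, so level $L_{b-1}$ contains vertices $v,w$ lying in a common stage that agree in every coordinate except $a$, where they carry distinct letters $x_a\ne y_a$. Fix distinct $s,r\in[d_b]$. In $t(vs)$ and in the analogous polynomials for $vr,ws,wr$, both the coordinate $a$ and the letter at position $b$ enter, among others, through the level-$k$ factor, since $a,b\in\pa(k)$. I would then specialize every label at a level $j\ne k$ to a constant that ignores its conditioning event, choosing, for those $j$ that are parents of $k$, constants for which the ensuing sum collapses to a single surviving term; under this specialization $t(vs)$ becomes $\sigma^{x_a}_s:=\sum_{z_k}\theta^{(k)}(z_k\mid\dots,x_a,\dots,s,\dots)$, a sum of level-$k$ indeterminates, and likewise $t(vr)\mapsto\sigma^{x_a}_r$, $t(ws)\mapsto\sigma^{y_a}_s$, $t(wr)\mapsto\sigma^{y_a}_r$. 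Because $x_a\ne y_a$ and $s\ne r$, these four sums involve pairwise disjoint sets of indeterminates, so $\sigma^{x_a}_s\sigma^{y_a}_r-\sigma^{x_a}_r\sigma^{y_a}_s\ne0$ (for instance it contains the monomial $\theta^{(k)}(1\mid\dots,x_a,\dots,s,\dots)\,\theta^{(k)}(1\mid\dots,y_a,\dots,r,\dots)$). Hence $t(vs)t(wr)-t(vr)t(ws)$ is a nonzero polynomial and $\TT_G$ is not balanced.

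I expect the converse to be the main obstacle: one must realize that the relevant stage sits at level $L_{b-1}$, governed by the \emph{larger} index $b$ of the bad pair rather than by $k$ itself, and then engineer the specialization of the non-$k$ labels so that exactly the level-$k$ indeterminates remain to witness non-balancedness, verifying that a chosen degree-two monomial survives both the inner summation and the specialization. The forward implication is comparatively routine once facts (a) and (b) are established, but those require careful bookkeeping with the topological order along the shortest path, and the decomposition $t(vs)=P\cdot Q_s$ must be set up so that the ``non-descendant'' part $P$ provably contains no occurrence of $s$.
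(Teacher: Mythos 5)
Your proposal is correct, but note that this paper does not actually prove Theorem~\ref{thm:perfect-iff-balanced-iff-decomposable}: it is imported verbatim from \cite[Theorem 3.1]{DS21}, so what you have written is a self-contained reproof rather than a variant of an argument appearing in the text. Your reduction is the right one (perfect $\iff$ decomposable is classical, and $\MM(G)=\MM(\TT_G)$ is Example~\ref{ex:stagedtree_ofDAG}), and both halves of ``perfect $\iff$ balanced'' check out. In the forward direction, your facts (a) and (b) do follow by induction along a directed path out of $k$, using completeness of the parent sets and the topological order; they give $\pa(j)\subseteq\pa(k)\cup\{k\}\cup\de(k)$ for descendants $j$ of $k$ and $\pa(j)\cap(\{k\}\cup\de(k))=\emptyset$ for non-descendants, so the sum defining $t(vs)$ splits over the descendant and non-descendant blocks of coordinates and factors as $P\cdot Q_s$ with $Q_s$ depending on $v$ only through $\xx_{\pa(k)}$; this is in effect an explicit polynomial form of the bijection criterion that the cited source encapsulates in Lemma~\ref{lem:balancecheck}, and is arguably cleaner since the pairing of terms is immediate from the factorization. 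In the converse, the specialization trick is legitimate because balancedness is an identity in the free ring $\R[\theta(e)\colon e\in E]$ (Definition~\ref{def:balanced}), so it must survive any evaluation of the level-$j$ labels, $j\neq k$, at $0/1$ constants; the four sums $\sigma^{x_a}_s,\sigma^{x_a}_r,\sigma^{y_a}_s,\sigma^{y_a}_r$ really do use pairwise disjoint level-$k$ indeterminates, because in $\TT_G$ a level-$k$ label is determined by the outcome of $X_k$ together with the $\pa(k)$-context and distinct contexts or outcomes get distinct labels, so your witness monomial survives and the binomial is nonzero; the choice of the stage at level $L_{b-1}$ (possible since $a\notin\pa(b)$) with $s\neq r\in[d_b]$ is exactly what forces both $a$ and $b$ into the level-$k$ factor. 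Compared with the paper's citation, your route buys an elementary, self-contained argument at the cost of redoing bookkeeping that the paper inherits from \cite[]{DS21} and reuses elsewhere (e.g.\ Lemma~\ref{lem:balancecheck} in the proof of Theorem~\ref{thm:balanced-for-every-context}); the only leg you leave to the literature, perfectness versus decomposability of $\MM(G)$, is treated the same way by the paper itself.
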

One could hope that the direct generalization of Theorem~\ref{thm:perfect-iff-balanced-iff-decomposable} is true for balanced CStrees. %decomposable CSmodels. 
Namely that a CStree $\TT$ is balanced if and only
all of its minimal context DAGs are perfect. This equivalence only holds 
 for three random variables $(p=3)$. 
For $p=4$, Example~\ref{ex:counterexample} provides a counterexample. 
In general, only one implication holds, namely, the CStree model $\MM(\TT)$ is balanced whenever all minimal contexts are perfect (Theorem~\ref{thm:perf_implies_balanced}).

\section{Decomposable CSmodels in three variables}\label{sec:3-vars}

Our subject of study from this point forward are balanced CStree models, the combinatorics of their context-specific DAG representations and the properties of their defining equations. Our results in Section~\ref{sec:algebra}
show that the properties of these models closely mirror those of decomposable DAG models. Therefore we introduce the following definition. 
\begin{definition}
A \textit{decomposable context-specific model (decomposable CSmodel)} is a balanced CStree model.
\end{definition}
Consistent with our previous notation, we will denote such a model by $\MM(\TT)$, where $\TT$ denotes the associated balanced CStree. The goal of this section is  to provide a complete description of CStree models
in three random variables, with fixed variable ordering $123$, and to prove the generalization of Theorem~\ref{thm:perfect-iff-balanced-iff-decomposable} for $p=3$.
That is, we prove the following result.
\begin{theorem}\label{thm:three_perfect_iff_balanced}
A CStree $\TT$ with $p=3$ is balanced, i.e. $\MM(\TT)$ is a decomposable CSmodel, if and only if all minimal context DAGs of $\TT$ are perfect.
\end{theorem}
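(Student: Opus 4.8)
The plan is to analyze all possible stagings of a CStree on $p=3$ binary (and then general discrete) variables with ordering $123$, and in each case compute when the balancedness condition holds and compare it with perfectness of the associated minimal context DAGs. Since the root and leaves are singleton stages, the only freedom is in the staging of level $L_1$ (two possibilities: the two nodes $x_1 \in \RR_{\{1\}}$ are either in the same stage or in singleton stages) and the staging of level $L_2$ (the nodes of $\RR_{\{1,2\}}$ grouped into stages subject to condition (3) of Definition~\ref{def:cstree}, which for this level forces each stage to be a fiber over a fixed value of $X_1$, of $X_2$, or all of $L_2$). So I would first enumerate these finitely many combinatorial types of CStrees and, for each, read off $\J(\TT)$ via Lemma~\ref{independence-statements-lemma}, identify the minimal contexts $\CC_\TT$, and write down the minimal context DAGs $G_{\cc}$.

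Next, for each type I would check the two conditions. For perfectness: since each $G_{\cc}$ is a DAG on at most $3$ ordered nodes, perfectness by Definition~\ref{def:perfect} reduces to checking that whenever a node has two parents they are adjacent; the only way this can fail is $G_\varnothing$ being the DAG $1\to 3, 2\to 3$ with no edge $1\to 2$, i.e. the only imperfect DAG on $3$ ordered nodes. For balancedness: by Definition~\ref{def:balanced}, I only need to test pairs of vertices in the same stage. Stages in $L_1$: if $0,1 \in L_1$ are one stage, balancedness at this pair requires $t(0s)t(1r) = t(0r)t(1s)$; since $t(x_1 x_2) = \sum_{x_3}\theta(x_1x_2\to x_1x_2x_3)$ and the staging of $L_2$ determines which of these factors coincide, this becomes a concrete polynomial identity in the edge parameters. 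Stages in $L_2$ consist of leaves' parents, and $t(\text{leaf}) = 1$, so the balancedness equation at an $L_2$-stage pair is automatically satisfied (it reads $1\cdot 1 = 1 \cdot 1$). Hence the only nontrivial balancedness constraint comes from a nonsingleton stage in $L_1$, and it depends precisely on how $L_2$ is staged. Working through the cases, the $L_1$-pair $\{0,1\}$ is balanced exactly when, for each $x_3$, the edge labels $\theta(0x_2 \to 0x_2x_3)$ and $\theta(1x_2 \to 1x_2x_3)$ are ``consistent'' across $x_2$ in the way dictated by the $L_2$-staging — and I expect this to translate, after using condition (2) of Definition~\ref{def:cstree}, exactly into the condition that the relevant minimal context DAG in which $X_3$ appears has its parents of $3$ forming a clique.

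The main obstacle, and the place I would spend the most care, is the case analysis itself: making sure every admissible $L_2$-staging is accounted for (including asymmetric ones where, say, $00 \sim 10$ but $01 \not\sim 11$, as in Figure~\ref{fig:example}) and correctly matching each to its collection of minimal context DAGs, then verifying the balancedness polynomial identity holds if and only if no $G_\cc$ equals the forbidden collider-without-edge DAG. Concretely, in the problematic type one has $00 \sim 10$ giving the CSI statement $X_3 \independent X_1 \mid X_2 = 0$ and a separate stage for $\{01,11\}$ or singletons giving (or not giving) $X_3 \independent X_1 \mid X_2 = 1$; if $0 \sim 1$ in $L_1$ as well, the minimal context DAG $G_\varnothing$ then wants $X_1 \independent X_2$ together with an edge structure on $\{1,2,3\}$ where $3$ may have both $1$ and $2$ as parents, and one checks the $L_1$ balancedness equation $t(00)t(11) = t(01)t(10)$ (in terms of the free $\theta$'s at level $L_2$) fails in exactly this configuration. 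I would also need to note that the argument is uniform in the cardinalities $d_1, d_2, d_3$ (binary is not special here, since condition (3) and the $t(\cdot)=1$ observation at leaves are cardinality-independent), and that the chosen ordering $123$ is without loss of generality as stated in the setup. Having verified both directions in every case, Theorem~\ref{thm:three_perfect_iff_balanced} follows; I would then remark that Example~\ref{ex:counterexample} shows this equivalence genuinely breaks down for $p=4$, so the case analysis cannot be bypassed by a slick uniform argument.
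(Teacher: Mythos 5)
Your proposal is correct in outline, but it takes a genuinely different route from the paper. You prove both implications by an exhaustive, self-contained case analysis of all admissible stagings for $p=3$, built on two sound observations: level-$2$ stages impose no balancedness constraints (leaves have interpolating polynomial $1$, so only a non-singleton stage in $L_1$ matters), and the only possibly imperfect minimal context DAG is $G_\varnothing$ equal to the collider $1\to 3\leftarrow 2$, since the non-empty context DAGs have at most two nodes. The paper instead obtains the direction ``all minimal context DAGs perfect $\Rightarrow$ balanced'' from the general Theorem~\ref{thm:perf_implies_balanced} (valid for every $p$), and proves the converse structurally: a non-perfect minimal context DAG must be the empty-context collider, which forces $X_1\independent X_2$; Lemma~\ref{lemma:x1-ind-x2-DAG} then shows that a balanced CStree with $p=3$ and $X_1\independent X_2$ represents a DAG, and Theorem~\ref{thm:perfect-iff-balanced-iff-decomposable} gives the contradiction. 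The computational heart of your analysis --- deciding when $t(0x_2)t(1y_2)=t(0y_2)t(1x_2)$ holds identically, using that interpolating polynomials at level $2$ are linear forms with equal or disjoint variable supports --- is exactly the dichotomy inside Lemma~\ref{lemma:x1-ind-x2-DAG}, so you should make that unique-factorization step explicit rather than leaving it at ``I expect this to translate''. What your route buys is elementarity: no appeal to Theorem~\ref{thm:perf_implies_balanced}, to the balancedness criterion for perfect DAG trees, or to context subtrees; in the good cases you verify balancedness directly. What it costs is that the enumeration must be demonstrably complete (all mixed level-$2$ stagings, the convention that $G_\varnothing$ is the complete, hence perfect, DAG when no CI statement holds, and arbitrary $d_i$), and the argument is intrinsically limited to $p=3$, whereas half of the paper's proof is the specialization of a statement that holds for all $p$.
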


%\newpage
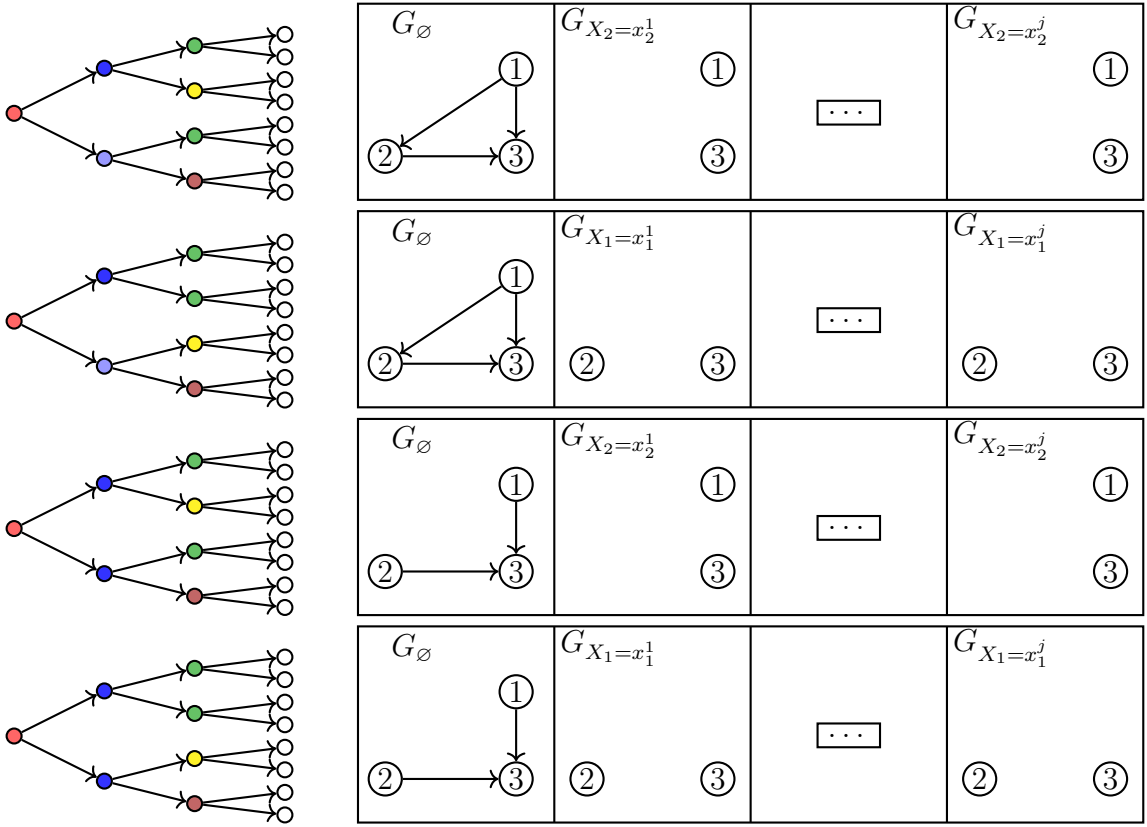
\begin{figure}
\begin{tikzpicture}[thick,scale=0.3]

 	 \node[circle, draw, fill=black!0, inner sep=2pt, minimum width=1pt] (w3) at (0,0)  {};
 	 \node[circle, draw, fill=black!0, inner sep=2pt, minimum width=1pt] (w4) at (0,-1) {};
 	 \node[circle, draw, fill=black!0, inner sep=2pt, minimum width=1pt] (w5) at (0,-2) {};
 	 \node[circle, draw, fill=black!0, inner sep=2pt, minimum width=1pt] (w6) at (0,-3) {};
 	 \node[circle, draw, fill=black!0, inner sep=2pt, minimum width=1pt] (v3) at (0,-4)  {};
 	 \node[circle, draw, fill=black!0, inner sep=2pt, minimum width=1pt] (v4) at (0,-5) {};
 	 \node[circle, draw, fill=black!0, inner sep=2pt, minimum width=1pt] (v5) at (0,-6) {};
 	 \node[circle, draw, fill=black!0, inner sep=2pt, minimum width=1pt] (v6) at (0,-7) {};

	 \node[circle, draw, fill=green!60!black!60, inner sep=2pt, minimum width=2pt] (w1) at (-4,-.5) {};
 	 \node[circle, draw, fill=yellow!90, inner sep=2pt, minimum width=2pt] (w2) at (-4,-2.5) {}; 
 	 \node[circle, draw, fill=green!60!black!60, inner sep=2pt, minimum width=2pt] (v1) at (-4,-4.5) {};
 	 \node[circle, draw, fill=red!60!black!60, inner sep=2pt, minimum width=2pt] (v2) at (-4,-6.5) {};

 	 \node[circle, draw, fill=blue!80, inner sep=2pt, minimum width=2pt] (w) at (-8,-1.5) {};
 	 \node[circle, draw, fill=blue!40, inner sep=2pt, minimum width=2pt] (v) at (-8,-5.5) {};	
	 
     \node[circle, draw, fill=red!60, inner sep=2pt, minimum width=2pt] (r) at (-12,-3.5) {};

	%---EDGES---	 

 	 \draw[->]   (r) --   (w) ;
 	 \draw[->]   (r) --   (v) ;

 	 \draw[->]   (w) --  (w1) ;
 	 \draw[->]   (w) --  (w2) ;

 	 \draw[->]   (w1) --   (w3) ;
 	 \draw[->]   (w1) --   (w4) ;
 	 \draw[->]   (w2) --  (w5) ;
 	 \draw[->]   (w2) --  (w6) ;

 	 \draw[->]   (v) --  (v1) ;
 	 \draw[->]   (v) --  (v2) ;

 	 \draw[->]   (v1) --  (v3) ;
 	 \draw[->]   (v1) --  (v4) ;
 	 \draw[->]   (v2) --  (v5) ;
 	 \draw[->]   (v2) --  (v6) ;

\end{tikzpicture}
    \begin{tikzpicture}[thick,scale=0.29]
	\draw (-1,3) -- (35,3) -- (35, -6) -- (-1, -6) -- (-1,3) -- cycle;
	\draw (8,3) -- (8,-6) ; 
	\draw (17,3) -- (17,-6) ;
    \draw (26,3) -- (26,-6) ;

	%---NODES---	 
 	 \node[circle, draw, fill=black!0, inner sep=1pt, minimum width=1pt] (1v2) at (6.25,0) {$1$};
 	 \node[circle, draw, fill=black!0, inner sep=1pt, minimum width=1pt] (1v3) at (0.25,-4) {$2$};
 	 \node[circle, draw, fill=black!0, inner sep=1pt, minimum width=1pt] (1v4) at (6.25,-4) {$3$};

 	 \node[circle, draw, fill=black!0, inner sep=1pt, minimum width=1pt] (2v2) at (15.5,0) {$1$};
 	 \node[circle, draw, fill=black!0, inner sep=1pt, minimum width=1pt] (2v4) at (15.5,-4) {$3$};

 	 \node[circle, draw, fill=black!0, inner sep=1pt, minimum width=1pt] (3v2) at (33.5,0) {$1$};
 	 \node[circle, draw, fill=black!0, inner sep=1pt, minimum width=1pt] (3v4) at (33.5,-4) {$3$};

        \node[draw] at (21.5,-2) {$\dots$};

	%---EDGES---	
      \draw[->]   (1v2) -- (1v3) ;  
 	 \draw[->]   (1v2) -- (1v4) ;
 	 \draw[->]   (1v3) -- (1v4) ;

	 %---LABELS---
	 \node at (-3,2) {} ;
	 \node at (1.5,2) {$G_{\emptyset}$} ;
	 \node at (10.5,2) {$G_{X_2=x_2^1}$} ;
	 \node at (28.5,2) {$G_{X_2 =x_2^j}$} ;
\end{tikzpicture}

\begin{tikzpicture}[thick,scale=0.3]

 	 \node[circle, draw, fill=black!0, inner sep=2pt, minimum width=1pt] (w3) at (0,0)  {};
 	 \node[circle, draw, fill=black!0, inner sep=2pt, minimum width=1pt] (w4) at (0,-1) {};
 	 \node[circle, draw, fill=black!0, inner sep=2pt, minimum width=1pt] (w5) at (0,-2) {};
 	 \node[circle, draw, fill=black!0, inner sep=2pt, minimum width=1pt] (w6) at (0,-3) {};
 	 \node[circle, draw, fill=black!0, inner sep=2pt, minimum width=1pt] (v3) at (0,-4)  {};
 	 \node[circle, draw, fill=black!0, inner sep=2pt, minimum width=1pt] (v4) at (0,-5) {};
 	 \node[circle, draw, fill=black!0, inner sep=2pt, minimum width=1pt] (v5) at (0,-6) {};
 	 \node[circle, draw, fill=black!0, inner sep=2pt, minimum width=1pt] (v6) at (0,-7) {};

	 \node[circle, draw, fill=green!60!black!60, inner sep=2pt, minimum width=2pt] (w1) at (-4,-.5) {};
 	 \node[circle, draw, fill=green!60!black!60, inner sep=2pt, minimum width=2pt] (w2) at (-4,-2.5) {}; 
 	 \node[circle, draw, fill=yellow!90, inner sep=2pt, minimum width=2pt] (v1) at (-4,-4.5) {};
 	 \node[circle, draw, fill=red!60!black!60, inner sep=2pt, minimum width=2pt] (v2) at (-4,-6.5) {};

 	 \node[circle, draw, fill=blue!80, inner sep=2pt, minimum width=2pt] (w) at (-8,-1.5) {};
 	 \node[circle, draw, fill=blue!40, inner sep=2pt, minimum width=2pt] (v) at (-8,-5.5) {};	
	 
     \node[circle, draw, fill=red!60, inner sep=2pt, minimum width=2pt] (r) at (-12,-3.5) {};

	%---EDGES---	 

 	 \draw[->]   (r) --   (w) ;
 	 \draw[->]   (r) --   (v) ;

 	 \draw[->]   (w) --  (w1) ;
 	 \draw[->]   (w) --  (w2) ;

 	 \draw[->]   (w1) --   (w3) ;
 	 \draw[->]   (w1) --   (w4) ;
 	 \draw[->]   (w2) --  (w5) ;
 	 \draw[->]   (w2) --  (w6) ;

 	 \draw[->]   (v) --  (v1) ;
 	 \draw[->]   (v) --  (v2) ;

 	 \draw[->]   (v1) --  (v3) ;
 	 \draw[->]   (v1) --  (v4) ;
 	 \draw[->]   (v2) --  (v5) ;
 	 \draw[->]   (v2) --  (v6) ;

\end{tikzpicture}
    \begin{tikzpicture}[thick,scale=0.29]
	\draw (-1,3) -- (35,3) -- (35, -6) -- (-1, -6) -- (-1,3) -- cycle;
	\draw (8,3) -- (8,-6) ; 
	\draw (17,3) -- (17,-6) ;
    \draw (26,3) -- (26,-6) ;

	%---NODES---	 
 	 \node[circle, draw, fill=black!0, inner sep=1pt, minimum width=1pt] (1v2) at (6.25,0) {$1$};
 	 \node[circle, draw, fill=black!0, inner sep=1pt, minimum width=1pt] (1v3) at (0.25,-4) {$2$};
 	 \node[circle, draw, fill=black!0, inner sep=1pt, minimum width=1pt] (1v4) at (6.25,-4) {$3$};

 	 \node[circle, draw, fill=black!0, inner sep=1pt, minimum width=1pt] (2v2) at (9.5,-4) {$2$};
 	 \node[circle, draw, fill=black!0, inner sep=1pt, minimum width=1pt] (2v4) at (15.5,-4) {$3$};

 	 \node[circle, draw, fill=black!0, inner sep=1pt, minimum width=1pt] (3v2) at (27.5,-4) {$2$};
 	 \node[circle, draw, fill=black!0, inner sep=1pt, minimum width=1pt] (3v4) at (33.5,-4) {$3$};

        \node[draw] at (21.5,-2) {$\dots$};

	%---EDGES---	
      \draw[->]   (1v2) -- (1v3) ;  
 	 \draw[->]   (1v2) -- (1v4) ;
 	 \draw[->]   (1v3) -- (1v4) ;

	 %---LABELS---
	 \node at (-3,2) {} ;
	 \node at (1.5,2) {$G_{\emptyset}$} ;
	 \node at (10.5,2) {$G_{X_1=x_1^1}$} ;
	 \node at (28.5,2) {$G_{X_1 = x_1^j}$} ;
\end{tikzpicture}

\begin{tikzpicture}[thick,scale=0.3]

 	 \node[circle, draw, fill=black!0, inner sep=2pt, minimum width=1pt] (w3) at (0,0)  {};
 	 \node[circle, draw, fill=black!0, inner sep=2pt, minimum width=1pt] (w4) at (0,-1) {};
 	 \node[circle, draw, fill=black!0, inner sep=2pt, minimum width=1pt] (w5) at (0,-2) {};
 	 \node[circle, draw, fill=black!0, inner sep=2pt, minimum width=1pt] (w6) at (0,-3) {};
 	 \node[circle, draw, fill=black!0, inner sep=2pt, minimum width=1pt] (v3) at (0,-4)  {};
 	 \node[circle, draw, fill=black!0, inner sep=2pt, minimum width=1pt] (v4) at (0,-5) {};
 	 \node[circle, draw, fill=black!0, inner sep=2pt, minimum width=1pt] (v5) at (0,-6) {};
 	 \node[circle, draw, fill=black!0, inner sep=2pt, minimum width=1pt] (v6) at (0,-7) {};

	 \node[circle, draw, fill=green!60!black!60, inner sep=2pt, minimum width=2pt] (w1) at (-4,-.5) {};
 	 \node[circle, draw, fill=yellow!90, inner sep=2pt, minimum width=2pt] (w2) at (-4,-2.5) {}; 
 	 \node[circle, draw, fill=green!60!black!60, inner sep=2pt, minimum width=2pt] (v1) at (-4,-4.5) {};
 	 \node[circle, draw, fill=red!60!black!60, inner sep=2pt, minimum width=2pt] (v2) at (-4,-6.5) {};

 	 \node[circle, draw, fill=blue!80, inner sep=2pt, minimum width=2pt] (w) at (-8,-1.5) {};
 	 \node[circle, draw, fill=blue!80, inner sep=2pt, minimum width=2pt] (v) at (-8,-5.5) {};	
	 
     \node[circle, draw, fill=red!60, inner sep=2pt, minimum width=2pt] (r) at (-12,-3.5) {};

	%---EDGES---	 

 	 \draw[->]   (r) --   (w) ;
 	 \draw[->]   (r) --   (v) ;

 	 \draw[->]   (w) --  (w1) ;
 	 \draw[->]   (w) --  (w2) ;

 	 \draw[->]   (w1) --   (w3) ;
 	 \draw[->]   (w1) --   (w4) ;
 	 \draw[->]   (w2) --  (w5) ;
 	 \draw[->]   (w2) --  (w6) ;

 	 \draw[->]   (v) --  (v1) ;
 	 \draw[->]   (v) --  (v2) ;

 	 \draw[->]   (v1) --  (v3) ;
 	 \draw[->]   (v1) --  (v4) ;
 	 \draw[->]   (v2) --  (v5) ;
 	 \draw[->]   (v2) --  (v6) ;

\end{tikzpicture}
    \begin{tikzpicture}[thick,scale=0.29]
	\draw (-1,3) -- (35,3) -- (35, -6) -- (-1, -6) -- (-1,3) -- cycle;
	\draw (8,3) -- (8,-6) ; 
	\draw (17,3) -- (17,-6) ;
    \draw (26,3) -- (26,-6) ;

	%---NODES---	 
 	 \node[circle, draw, fill=black!0, inner sep=1pt, minimum width=1pt] (1v2) at (6.25,0) {$1$};
 	 \node[circle, draw, fill=black!0, inner sep=1pt, minimum width=1pt] (1v3) at (0.25,-4) {$2$};
 	 \node[circle, draw, fill=black!0, inner sep=1pt, minimum width=1pt] (1v4) at (6.25,-4) {$3$};

 	 \node[circle, draw, fill=black!0, inner sep=1pt, minimum width=1pt] (2v2) at (15.5,0) {$1$};
 	 \node[circle, draw, fill=black!0, inner sep=1pt, minimum width=1pt] (2v4) at (15.5,-4) {$3$};

 	 \node[circle, draw, fill=black!0, inner sep=1pt, minimum width=1pt] (3v2) at (33.5,0) {$1$};
 	 \node[circle, draw, fill=black!0, inner sep=1pt, minimum width=1pt] (3v4) at (33.5,-4) {$3$};

        \node[draw] at (21.5,-2) {$\dots$};

	%---EDGES---	
 	 \draw[->]   (1v2) -- (1v4) ;
 	 \draw[->]   (1v3) -- (1v4) ;

	 %---LABELS---
	 \node at (-3,2) {} ;
	 \node at (1.5,2) {$G_{\emptyset}$} ;
	 \node at (10.5,2) {$G_{X_2=x_2^1}$} ;
	 \node at (28.5,2) {$G_{X_2 =x_2^j}$} ;
\end{tikzpicture}

\begin{tikzpicture}[thick,scale=0.3]

 	 \node[circle, draw, fill=black!0, inner sep=2pt, minimum width=1pt] (w3) at (0,0)  {};
 	 \node[circle, draw, fill=black!0, inner sep=2pt, minimum width=1pt] (w4) at (0,-1) {};
 	 \node[circle, draw, fill=black!0, inner sep=2pt, minimum width=1pt] (w5) at (0,-2) {};
 	 \node[circle, draw, fill=black!0, inner sep=2pt, minimum width=1pt] (w6) at (0,-3) {};
 	 \node[circle, draw, fill=black!0, inner sep=2pt, minimum width=1pt] (v3) at (0,-4)  {};
 	 \node[circle, draw, fill=black!0, inner sep=2pt, minimum width=1pt] (v4) at (0,-5) {};
 	 \node[circle, draw, fill=black!0, inner sep=2pt, minimum width=1pt] (v5) at (0,-6) {};
 	 \node[circle, draw, fill=black!0, inner sep=2pt, minimum width=1pt] (v6) at (0,-7) {};

	 \node[circle, draw, fill=green!60!black!60, inner sep=2pt, minimum width=2pt] (w1) at (-4,-.5) {};
 	 \node[circle, draw, fill=green!60!black!60, inner sep=2pt, minimum width=2pt] (w2) at (-4,-2.5) {}; 
 	 \node[circle, draw, fill=yellow!90, inner sep=2pt, minimum width=2pt] (v1) at (-4,-4.5) {};
 	 \node[circle, draw, fill=red!60!black!60, inner sep=2pt, minimum width=2pt] (v2) at (-4,-6.5) {};

 	 \node[circle, draw, fill=blue!80, inner sep=2pt, minimum width=2pt] (w) at (-8,-1.5) {};
 	 \node[circle, draw, fill=blue!80, inner sep=2pt, minimum width=2pt] (v) at (-8,-5.5) {};	
	 
     \node[circle, draw, fill=red!60, inner sep=2pt, minimum width=2pt] (r) at (-12,-3.5) {};

	%---EDGES---	 

 	 \draw[->]   (r) --   (w) ;
 	 \draw[->]   (r) --   (v) ;

 	 \draw[->]   (w) --  (w1) ;
 	 \draw[->]   (w) --  (w2) ;

 	 \draw[->]   (w1) --   (w3) ;
 	 \draw[->]   (w1) --   (w4) ;
 	 \draw[->]   (w2) --  (w5) ;
 	 \draw[->]   (w2) --  (w6) ;

 	 \draw[->]   (v) --  (v1) ;
 	 \draw[->]   (v) --  (v2) ;

 	 \draw[->]   (v1) --  (v3) ;
 	 \draw[->]   (v1) --  (v4) ;
 	 \draw[->]   (v2) --  (v5) ;
 	 \draw[->]   (v2) --  (v6) ;

\end{tikzpicture}
    \begin{tikzpicture}[thick,scale=0.29]
	\draw (-1,3) -- (35,3) -- (35, -6) -- (-1, -6) -- (-1,3) -- cycle;
	\draw (8,3) -- (8,-6) ; 
	\draw (17,3) -- (17,-6) ;
    \draw (26,3) -- (26,-6) ;

	%---NODES---	 
 	 \node[circle, draw, fill=black!0, inner sep=1pt, minimum width=1pt] (1v2) at (6.25,0) {$1$};
 	 \node[circle, draw, fill=black!0, inner sep=1pt, minimum width=1pt] (1v3) at (0.25,-4) {$2$};
 	 \node[circle, draw, fill=black!0, inner sep=1pt, minimum width=1pt] (1v4) at (6.25,-4) {$3$};

 	 \node[circle, draw, fill=black!0, inner sep=1pt, minimum width=1pt] (2v2) at (9.5,-4) {$2$};
 	 \node[circle, draw, fill=black!0, inner sep=1pt, minimum width=1pt] (2v4) at (15.5,-4) {$3$};

 	 \node[circle, draw, fill=black!0, inner sep=1pt, minimum width=1pt] (3v2) at (27.5,-4) {$2$};
 	 \node[circle, draw, fill=black!0, inner sep=1pt, minimum width=1pt] (3v4) at (33.5,-4) {$3$};

        \node[draw] at (21.5,-2) {$\dots$};

	%---EDGES---	
 	 \draw[->]   (1v2) -- (1v4) ;
 	 \draw[->]   (1v3) -- (1v4) ;

	 %---LABELS---
	 \node at (-3,2) {} ;
	 \node at (1.5,2) {$G_{\emptyset}$} ;
	 \node at (10.5,2) {$G_{X_1=x_1^1}$} ;
	 \node at (28.5,2) {$G_{X_1 = x_1^j}$} ;
\end{tikzpicture}
\caption{\small{All CStrees with $p=3$ and variable ordering $123$ which do not represent a DAG.}}
\label{fig:all-trees-for-3-variables}
\end{figure}

Before proving the above theorem, we classify all possible CStrees on three random variables along with their minimal contexts, taking advantage of the small value of $p$.

\begin{example}\label{ex:p=3}
We provide a list of all CStrees which are not staged tree representations of a DAG with causal ordering $123$ (see Figure~\ref{fig:all-trees-for-3-variables}). In this case there are four families of CStrees.

Consider the two pairs of DAGs $\{([3],\{1\to 2, 1\to 3, 2\to 3\}), ([3],\{1\to 3, 2\to 3\})\}$ and $\{(\{1,3\},\emptyset),(\{2,3\},\emptyset)\}$. Each of the four families is defined as follows: Choose two graphs $G_1,G_2$, one from each pair. Let $i\in\{1,2\}$ such that $i$ does not appear in the vertex set of $G_2$ and let $I\subsetneq [d_i]$. Now, consider the CStree defined by taking $G_1$ as its empty context DAG and $G_2$ as the minimal context DAG for the contexts $X_i=j$ for every $j\in I$.
Depending on the choice of $G_1,G_2$ we get exactly the four families in Figure~\ref{fig:all-trees-for-3-variables}. The first family for example corresponds to choosing $G_1$ to be the complete graph, $G_2=(\{1,3\},\emptyset)$ and some $I\subsetneq [d_2]$.

Note that any such choice does define a CStree and all contexts will be minimal contexts (except the empty context if the DAG is chosen to be the complete graph). If one would take $I=[d_i]$ this would no longer be true and the CStree would in fact be the staged tree representation of a DAG. The staged trees on the left of Figure~\ref{fig:all-trees-for-3-variables} are examples in which all random variables are binary and such that the minimal context which is not the empty context is either $X_1=x_1^1$ or $X_2=x_2^1$.

In the first two families the empty context is not a minimal context as there are no CI relations that hold. We still draw the complete graph in this example for consistency.

Moreover, we can check that the first two families are balanced CStrees whereas the latter two are not. In the first two cases we also see that all minimal context DAGs are perfect which again is not the case for the latter two (cf. Theorems~\ref{thm:perf_implies_balanced},~\ref{thm:three_perfect_iff_balanced}).

\end{example}

\begin{proposition}\label{prop:cstrees_three_variables}
    The list of CStrees in Example~\ref{ex:p=3} is a complete list of CStrees with levels $(L_1,L_2,L_3)\sim (X_1,X_2,X_3)$
    that are not staged tree representations of a DAG.
    %three levels that are not staged tree representations of a DAG.
\end{proposition}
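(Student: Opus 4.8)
The plan is to reduce the proposition to a finite combinatorial enumeration of stagings and then compare with Example~\ref{ex:p=3}. Since $p=3$ and the variable ordering is fixed to $123$, a CStree $\TT$ is determined by the staging of its two inner levels $L_1$ and $L_2$, the root and the leaves always being singleton stages. First I would classify the admissible stagings using condition $(3)$ of Definition~\ref{def:cstree}. In $L_1$ the only possible sets $C$ are subsets of $\{1\}$, so $L_1$ is either a single stage ($C=\varnothing$) or the partition into singletons ($C=\{1\}$). In $L_2$ the possible sets $C$ are $\varnothing$, $\{1\}$, $\{2\}$, $\{1,2\}$; viewing $L_2$, which is in bijection with $\RR_{\{1,2\}}$, as a $d_1\times d_2$ grid with rows indexed by $X_1$ and columns by $X_2$, this means every stage is the whole grid, a full row, a full column, or a single cell. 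Since the stages form a partition and every row meets every column, a row-stage and a column-stage cannot both occur, and once a set $R\subseteq[d_1]$ of rows are stages every cell outside those rows must be a singleton (it cannot lie in a row-stage, a column-stage, or the whole-grid stage). Hence the staging of $L_2$ is exactly one of: the whole grid; a set $R\subseteq[d_1]$ of full rows together with singletons; or a set $Q\subseteq[d_2]$ of full columns together with singletons (with $R=\varnothing$ and $Q=\varnothing$ both yielding the all-singletons staging). Conditions $(1)$ and $(2)$ of Definitions~\ref{def:staged_tree} and~\ref{def:cstree} restrict the edge labels but not the underlying partition, so conversely each such partition is realized by a genuine CStree.

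Next I would single out the stagings that are staged tree representations of a DAG. By Example~\ref{ex:stagedtree_ofDAG}, for a DAG $G$ on $[3]$ with ordering $123$ the level $L_1$ of $\TT_G$ is partitioned according to $X_{\pa(2)}$ and $L_2$ according to $X_{\pa(3)}$. Letting $\pa(2)$ range over subsets of $\{1\}$ shows every staging of $L_1$ arises from a DAG; letting $\pa(3)$ range over subsets of $\{1,2\}$ shows that the stagings of $L_2$ arising from a DAG are precisely the whole grid ($\pa(3)=\varnothing$), all rows ($\pa(3)=\{1\}$, i.e.\ $R=[d_1]$), all columns ($\pa(3)=\{2\}$, i.e.\ $Q=[d_2]$), and all singletons ($\pa(3)=\{1,2\}$). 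Comparing with the previous paragraph, $\TT$ is not a staged tree representation of a DAG if and only if the staging of $L_2$ is of row type with $\varnothing\subsetneq R\subsetneq[d_1]$ or of column type with $\varnothing\subsetneq Q\subsetneq[d_2]$, the staging of $L_1$ being arbitrary.

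Finally I would match the four resulting combinations---two choices for $L_1$ against the two non-DAG shapes of $L_2$---with the four families of Example~\ref{ex:p=3}. Lemma~\ref{independence-statements-lemma} provides the dictionary: $L_1$ being a single stage is equivalent to $X_1\independent X_2$ holding in $\MM(\TT)$, so $G_\varnothing=([3],\{1\to 3,\,2\to 3\})$ in that case and $G_\varnothing$ is the complete DAG on $[3]$ otherwise; a column-type staging of $L_2$ with set $Q$ is equivalent to $X_3\independent X_1\mid X_2=q$ holding exactly for $q\in Q$, hence the minimal contexts are $\{X_2=q\colon q\in Q\}$ with common context DAG $(\{1,3\},\varnothing)$, and a row-type staging with set $R$ gives minimal contexts $\{X_1=r\colon r\in R\}$ with context DAG $(\{2,3\},\varnothing)$. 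These are exactly the four families of Example~\ref{ex:p=3}, obtained by choosing $G_1$ from the first pair, $G_2$ from the second pair, and $I=Q$ or $I=R$ with $\varnothing\subsetneq I\subsetneq[d_i]$; conversely every such choice produces a CStree whose $L_2$-staging has one of the non-DAG shapes above, so none of them is a staged tree representation of a DAG. The step I expect to be the main obstacle is making the $L_2$ classification airtight---arguing from condition $(3)$ that a row-stage and a column-stage cannot coexist and that all cells outside the chosen rows (or columns) are forced to be singletons---together with the routine but slightly delicate check that the minimal contexts and their minimal I-MAPs come out as stated.
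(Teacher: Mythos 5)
Your proof is correct, but it takes a genuinely different route from the paper's. The paper proves this proposition at the end of Section~\ref{sec:combinatorics}, arguing entirely through the minimal context DAGs and using Proposition~\ref{prop:weird_formula} twice: first to conclude that if no vertex of the empty context DAG has two parents then $\TT$ is a staged tree representation of a DAG, so $G_\emptyset$ is either $1\to 3\leftarrow 2$ or the complete graph and any other minimal context DAG is edgeless on $\{1,3\}$ or $\{2,3\}$; and second to rule out having minimal contexts of both forms $X_1=x_1^i$ and $X_2=x_2^j$, since these would force $X_3\independent X_{1,2}$ and contradict the edges into $3$. You instead enumerate the admissible stagings directly from condition (3) of Definition~\ref{def:cstree} ($L_1$ is a single stage or all singletons; $L_2$, viewed as a $d_1\times d_2$ grid, is the whole grid, some full rows plus singletons, or some full columns plus singletons, a row-stage and a column-stage being excluded because they intersect), compare with the stagings of $\TT_G$ from Example~\ref{ex:stagedtree_ofDAG}, and only then translate into the context-DAG description of Example~\ref{ex:p=3}. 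Your grid argument is essentially the intersection-of-stages observation that also drives the proof of Proposition~\ref{prop:weird_formula}, but used at the level of partitions rather than implied CSI statements; as a result your argument is self-contained and could be given already in Section~\ref{sec:3-vars}, whereas the paper's version is shorter once Proposition~\ref{prop:weird_formula} is available and stays in the language in which the families are presented, at the price of deferring the proof. Your route shifts the remaining work to the bookkeeping you flag (checking the minimal contexts and their minimal I-MAPs), which is indeed routine; two points worth making explicit are that the row/column incompatibility uses $d_1,d_2\ge 2$, and that $I$ in Example~\ref{ex:p=3} must be read as nonempty, exactly as in your condition $\varnothing\subsetneq I\subsetneq[d_i]$, since $I=\varnothing$ returns a DAG representation.
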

The proof can be found at the end of Section \ref{sec:combinatorics}.

\begin{lemma}\label{lemma:x1-ind-x2-DAG}
Let $\TT$ be a balanced CStree with $p=3$. If $X_1\independent X_2$, then $\TT$ represents a DAG.
\end{lemma}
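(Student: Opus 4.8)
The statement asserts that for $p=3$, if a balanced CStree $\TT$ entails the marginal independence $X_1 \independent X_2$, then in fact $\TT$ is the staged tree representation of a DAG. The plan is to combine the explicit classification of CStrees on three variables (Proposition~\ref{prop:cstrees_three_variables}, equivalently Example~\ref{ex:p=3}) with the balancedness hypothesis, and then rule out every non-DAG family that survives. Concretely, I would argue: if $\TT$ is not the staged tree representation of a DAG, then by Proposition~\ref{prop:cstrees_three_variables} it belongs to one of the four families in Figure~\ref{fig:all-trees-for-3-variables}; in every one of these families the empty context DAG $G_\emptyset$ is either the complete DAG on $[3]$ (families one and two, where no CI relation holds) or has edge set $\{1\to 3, 2\to 3\}$ (families three and four). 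So $X_1 \independent X_2$ can only be entailed in families three and four. But Example~\ref{ex:p=3} already records that the latter two families are \emph{not} balanced. Hence a balanced CStree entailing $X_1 \independent X_2$ cannot be a non-DAG CStree, and the lemma follows.

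First I would set up the contrapositive: assume $\TT$ is balanced and $X_1 \independent X_2$ holds, and suppose for contradiction that $\TT$ does not represent a DAG. Invoking Proposition~\ref{prop:cstrees_three_variables}, place $\TT$ in one of the four families. Then I would dispose of families one and two directly: there the only nontrivial stages occur at level $L_2$ (they arise from taking $G_1$ to be the complete DAG, so $L_1$ has singleton stages), which means $X_1$ and $X_2$ are not independent — indeed, $\J(\TT)$ contains no CI statement with empty conditioning set, so $X_1 \independent X_2 \notin \J(\TT)$, contradicting the hypothesis. This leaves families three and four, where $G_\emptyset = ([3], \{1\to 3, 2\to 3\})$ does entail $X_1 \independent X_2$.

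For families three and four the contradiction must come from balancedness rather than from the independence structure. Here I have two options. The clean route is simply to cite the final observation of Example~\ref{ex:p=3}, which states that the latter two families are not balanced CStrees — this immediately contradicts the assumption that $\TT$ is balanced and finishes the proof. If one prefers a self-contained computation, one checks directly that in these families there is a pair of vertices $v = x_1 0$, $w = x_1' 0$ in the same stage at level $L_2$ (the stage with stage-defining context $X_2 = x_2^1$, say) while the vertices at level $L_1$ are \emph{not} all in one stage — so the interpolating polynomials $t(v 0), t(v 1), t(w 0), t(w 1)$ involve genuinely distinct level-$L_1$ parameters, and the balancedness equality $t(vs)t(wr) = t(vr)t(ws)$ fails in $\R[\theta(e)\colon e\in E]$. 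Either way one reaches a contradiction, so $\TT$ must represent a DAG.

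The main obstacle is not conceptual but bookkeeping: one must be careful that the classification in Proposition~\ref{prop:cstrees_three_variables} genuinely covers \emph{all} CStrees on three variables with ordering $123$ that are not DAG representations (this is exactly what that proposition provides, proved at the end of Section~\ref{sec:combinatorics}), and one must correctly read off from Example~\ref{ex:p=3} which families entail $X_1 \independent X_2$ and which are balanced. Once those two facts are in hand the proof is a short case check. A secondary subtlety worth flagging is the degenerate boundary case $I = [d_i]$ in the construction of Example~\ref{ex:p=3}: there the purported non-DAG CStree collapses to a DAG representation, so it is consistent with — not a counterexample to — the lemma, and I would note this explicitly to preempt confusion.
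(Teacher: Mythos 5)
Your route is genuinely different from the paper's: the paper argues directly from balancedness (the level-$1$ stage forced by $X_1\independent X_2$ plus the equality $t(v_i)t(u_j)=t(v_j)t(u_i)$ splits into two cases, each of which kills all non-empty minimal contexts, so $\CC_\TT=\{\varnothing\}$), whereas you go through the classification of Proposition~\ref{prop:cstrees_three_variables}. Citing that proposition is not circular, since its proof only uses Proposition~\ref{prop:weird_formula}, and your disposal of the first two families is fine. The problem is the step where you must show that families three and four are unbalanced, and there your argument has a genuine gap.

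Your ``clean route'' is to cite the sentence in Example~\ref{ex:p=3} that the latter two families are not balanced. But that sentence is an unproved assertion (``we can check''), and its parenthetical justification is precisely Theorem~\ref{thm:three_perfect_iff_balanced}, whose proof in the paper rests on this very lemma; leaning on it would make your argument circular, so you must supply the check yourself. Your fallback computation, as written, does not work: you take $v=x_10$, $w=x_1'0$ in the same stage at level $L_2$ and claim the balance identity fails for them, and you assert that the level-$L_1$ vertices are not all in one stage. Both claims are wrong. In families three and four the level-$1$ vertices \emph{are} all in one stage (that is exactly how $X_1\independent X_2$ is encoded, and it is your hypothesis), and for $p=3$ the balance condition for a level-$L_2$ pair is vacuous: the children $v0,v1,w0,w1$ are leaves, whose interpolating polynomials equal $1$, so $t(vs)t(wr)=t(vr)t(ws)$ holds trivially and involves no level-$1$ parameters at all. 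The genuine failure of balancedness sits at the level-$1$ pair $v=0$, $w=1$: writing $P:=t(0x_2^1)=t(1x_2^1)$ (equal because $0x_2^1\sim 1x_2^1$), and $Q:=t(0y_2)$, $R:=t(1y_2)$ for another outcome $y_2$, the vertices $0y_2$ and $1y_2$ lie in distinct stages, so $Q$ and $R$ are sums over disjoint sets of edge labels; balancedness would force $P\,R=P\,Q$, i.e.\ $P(R-Q)=0$ in $\R[\theta(e)\colon e\in E]$, which is false. With that computation substituted, your classification-based proof closes; as stated, the decisive step is either circular or based on an incorrect identification of the offending stage.
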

\begin{proof}
Since $X_1\independent X_2$, all vertices in the first level of $\TT$ are in the same stage and $\varnothing\in\CC_\TT$. We claim that there are no other minimal contexts, besides the empty one. Since $\TT$ is balanced by assumption, for any two vertices $v$ and $u$ in level 1 and $v_i,v_j\in\children_\TT(v), u_i,u_j\in\children_\TT(u)$ with $\theta(u\to u_\ell)=\theta(v\to v_\ell)$ for $\ell\in[d_2]$, we have
$t(v_i)t(u_j)=t(v_j)t(u_i).$
Since $p=3$, we have one of the following cases:
\begin{align*}
    &1)\;t(v_i)=t(v_j)\implies v_i\sim v_j\text{ and }u_i\sim u_j\\
    &2)\;t(v_i)\neq t(v_j)\implies u_i\sim v_i\text{ and }u_j\sim v_j,
\end{align*}
where $\sim$ denotes the equivalence relation of being in the same stage. Since $\TT$ is a CStree, the first case implies that all children of any vertex $v$ in level 1 are in the same stage. But then $X_2\independent X_3|X_1$, so $X_1=\ell$ is not a minimal context for any $\ell\in[d_1]$. In the second case, we get that for any two vertices $v$ and $u$ in level 1, we have $v'\sim w'$ for some $v'\in\children_\TT (v$) and $w'\in\children_\TT (w)$. But then $X_1\independent X_3|X_2$, so again $X_2=\ell\not\in\CC_\TT$ for any $\ell\in[d_2]$. We conclude $\CC_\TT=\{\varnothing\}$, so indeed $\TT$ represents a DAG.
\end{proof}

We are now ready to prove Theorem~\ref{thm:three_perfect_iff_balanced}.
\begin{proof}[Proof of Theorem~\ref{thm:three_perfect_iff_balanced}]
We show in Theorem~\ref{thm:perf_implies_balanced} that (for any $p$) if all minimal context DAGs of $\TT$ are perfect, then $\TT$ is balanced. Hence, it suffices to show the other implication. Let $\TT$ be a balanced CStree with $p=3$. Assume there exists a minimal context DAG $G$ that is not perfect. This has to be the empty context DAG as other minimal context DAGs can only have two vertices. Hence, the empty minimal context DAG is $1\to 3 \leftarrow 2$ which implies $X_1\independent X_2$. Using Lemma~\ref{lemma:x1-ind-x2-DAG} it follows that $\TT=\TT_G$ is the staged tree representation of $G$. However, such a CStree is unbalanced by Theorem~\ref{thm:perfect-iff-balanced-iff-decomposable}.
\end{proof}

We have just observed that every balanced CStree has only perfect minimal contexts DAGs when $p=3$. This is no longer true for $p\ge 4$, as illustrated by the next example.

\begin{figure}[t]
    \centering
\begin{tikzpicture}[thick,scale=0.3]

 	 \node[circle, draw, fill=black!0, inner sep=2pt, minimum width=1pt] (w3) at (0,0)  {};
 	 \node[circle, draw, fill=black!0, inner sep=2pt, minimum width=1pt] (w4) at (0,-1) {};
 	 \node[circle, draw, fill=black!0, inner sep=2pt, minimum width=1pt] (w5) at (0,-2) {};
 	 \node[circle, draw, fill=black!0, inner sep=2pt, minimum width=1pt] (w6) at (0,-3) {};
 	 \node[circle, draw, fill=black!0, inner sep=2pt, minimum width=1pt] (v3) at (0,-4)  {};
 	 \node[circle, draw, fill=black!0, inner sep=2pt, minimum width=1pt] (v4) at (0,-5) {};
 	 \node[circle, draw, fill=black!0, inner sep=2pt, minimum width=1pt] (v5) at (0,-6) {};
 	 \node[circle, draw, fill=black!0, inner sep=2pt, minimum width=1pt] (v6) at (0,-7) {};
 	 \node[circle, draw, fill=black!0, inner sep=2pt, minimum width=2pt] (w3i) at (0,-8)  {};
 	 \node[circle, draw, fill=black!0, inner sep=2pt, minimum width=2pt] (w4i) at (0,-9) {};
 	 \node[circle, draw, fill=black!0, inner sep=2pt, minimum width=2pt] (w5i) at (0,-10) {};
 	 \node[circle, draw, fill=black!0, inner sep=2pt, minimum width=2pt] (w6i) at (0,-11) {};
 	 \node[circle, draw, fill=black!0, inner sep=2pt, minimum width=2pt] (v3i) at (0,-12)  {};
 	 \node[circle, draw, fill=black!0, inner sep=2pt, minimum width=2pt] (v4i) at (0,-13) {};
 	 \node[circle, draw, fill=black!0, inner sep=2pt, minimum width=2pt] (v5i) at (0,-14) {};
 	 \node[circle, draw, fill=black!0, inner sep=2pt, minimum width=2pt] (v6i) at (0,-15) {};

	 \node[circle, draw, fill=green!60!black!60, inner sep=2pt, minimum width=2pt] (w1) at (-4,-.5) {};
 	 \node[circle, draw, fill=green!60!black!60, inner sep=2pt, minimum width=2pt] (w2) at (-4,-2.5) {}; 
 	 \node[circle, draw, fill=red!60!black!60, inner sep=2pt, minimum width=2pt] (v1) at (-4,-4.5) {};
 	 \node[circle, draw, fill=red!60!black!60, inner sep=2pt, minimum width=2pt] (v2) at (-4,-6.5) {};	 
	 \node[circle, draw, fill=red!80!yellow!60, inner sep=2pt, minimum width=2pt] (w1i) at (-4,-8.5) {};
 	 \node[circle, draw, fill=yellow!50, inner sep=2pt, minimum width=2pt] (w2i) at (-4,-10.5) {};
 	 \node[circle, draw, fill=red!80!yellow!60, inner sep=2pt, minimum width=2pt] (v1i) at (-4,-12.5) {};
 	 \node[circle, draw, fill=yellow!50, inner sep=2pt, minimum width=2pt] (v2i) at (-4,-14.5) {};

 	 \node[circle, draw, fill=blue!80, inner sep=2pt, minimum width=2pt] (w) at (-8,-1.5) {};
 	 \node[circle, draw, fill=blue!80, inner sep=2pt, minimum width=2pt] (v) at (-8,-5.5) {};	
 	 \node[circle, draw, fill=blue!20, inner sep=2pt, minimum width=2pt] (wi) at (-8,-9.5) {};
 	 \node[circle, draw, fill=blue!20, inner sep=2pt, minimum width=2pt] (vi) at (-8,-13.5) {};
	 
     \node[circle, draw, fill=red!60, inner sep=2pt, minimum width=2pt] (r) at (-12,-3.5) {};
 	 \node[circle, draw, fill=green!60, inner sep=2pt, minimum width=2pt] (ri) at (-12,-11.5) {};

 	 \node[circle, draw, fill=black!0, inner sep=2pt, minimum width=2pt] (I) at (-16,-7.5) {};

	%---EDGES---	 
 	 \draw[->]   (I) --    (r) ;
 	 \draw[->]   (I) --   (ri) ;

 	 \draw[->]   (r) --   (w) ;
 	 \draw[->]   (r) --   (v) ;

 	 \draw[->]   (w) --  (w1) ;
 	 \draw[->]   (w) --  (w2) ;

 	 \draw[->]   (w1) --   (w3) ;
 	 \draw[->]   (w1) --   (w4) ;
 	 \draw[->]   (w2) --  (w5) ;
 	 \draw[->]   (w2) --  (w6) ;

 	 \draw[->]   (v) --  (v1) ;
 	 \draw[->]   (v) --  (v2) ;

 	 \draw[->]   (v1) --  (v3) ;
 	 \draw[->]   (v1) --  (v4) ;
 	 \draw[->]   (v2) --  (v5) ;
 	 \draw[->]   (v2) --  (v6) ;

 	 \draw[->]   (ri) --   (wi) ;
 	 \draw[->]   (ri) -- (vi) ;

 	 \draw[->]   (wi) --  (w1i) ;
 	 \draw[->]   (wi) --  (w2i) ;

 	 \draw[->]   (w1i) --  (w3i) ;
 	 \draw[->]   (w1i) -- (w4i) ;
 	 \draw[->]   (w2i) --  (w5i) ;
 	 \draw[->]   (w2i) --  (w6i) ;

 	 \draw[->]   (vi) --  (v1i) ;
 	 \draw[->]   (vi) --  (v2i) ;

 	 \draw[->]   (v1i) --  (v3i) ;
 	 \draw[->]   (v1i) -- (v4i) ;
 	 \draw[->]   (v2i) -- (v5i) ;
 	 \draw[->]   (v2i) --  (v6i) ;

\end{tikzpicture}
\begin{tikzpicture}[thick,scale=0.29]
	\draw (-1,3) -- (26,3) -- (26, -6) -- (-1, -6) -- (-1,3) -- cycle;
	\draw (7.5,3) -- (7.5,-6) ; 
	\draw (17,3) -- (17,-6) ; 
	
	%---NODES---	 
	 \node[circle, draw, fill=black!0, inner sep=1pt, minimum width=1pt] (1v1) at (0.25,0) {$1$};
 	 \node[circle, draw, fill=black!0, inner sep=1pt, minimum width=1pt] (1v2) at (6.25,0) {$2$};
 	 \node[circle, draw, fill=black!0, inner sep=1pt, minimum width=1pt] (1v3) at (0.25,-4) {$3$};
 	 \node[circle, draw, fill=black!0, inner sep=1pt, minimum width=1pt] (1v4) at (6.25,-4) {$4$};

 	 \node[circle, draw, fill=black!0, inner sep=1pt, minimum width=1pt] (2v2) at (15.5,0) {$2$};
 	 \node[circle, draw, fill=black!0, inner sep=1pt, minimum width=1pt] (2v3) at (9.5,-4) {$3$};
 	 \node[circle, draw, fill=black!0, inner sep=1pt, minimum width=1pt] (2v4) at (15.5,-4) {$4$};

 	 \node[circle, draw, fill=black!0, inner sep=1pt, minimum width=1pt] (3v2) at (24.5,0) {$2$};
 	 \node[circle, draw, fill=black!0, inner sep=1pt, minimum width=1pt] (3v3) at (18.5,-4) {$3$};
 	 \node[circle, draw, fill=black!0, inner sep=1pt, minimum width=1pt] (3v4) at (24.5,-4) {$4$};

	%---EDGES---	 
 	 \draw[->]   (1v1) -- (1v2) ;
 	 \draw[->]   (1v1) -- (1v3) ;
 	 \draw[->]   (1v1) -- (1v4) ;
 	 \draw[->]   (1v2) -- (1v4) ;
 	 \draw[->]   (1v3) -- (1v4) ;
 	 
 	 \draw[->]   (2v2) -- (2v4) ;
 	 
 	 \draw[->]   (3v3) -- (3v4) ;

	 %---LABELS---
	 \node at (-3,2) {} ;
	 \node at (1.5,2) {$G_{\emptyset}$} ;
	 \node at (10,2) {$G_{X_1=0}$} ;
	 \node at (19.5,2) {$G_{X_1 = 1}$} ;
\end{tikzpicture}
\caption{\small{A balanced CStree with a non-perfect minimal context.}}
\label{fig:counterexample}
\end{figure}
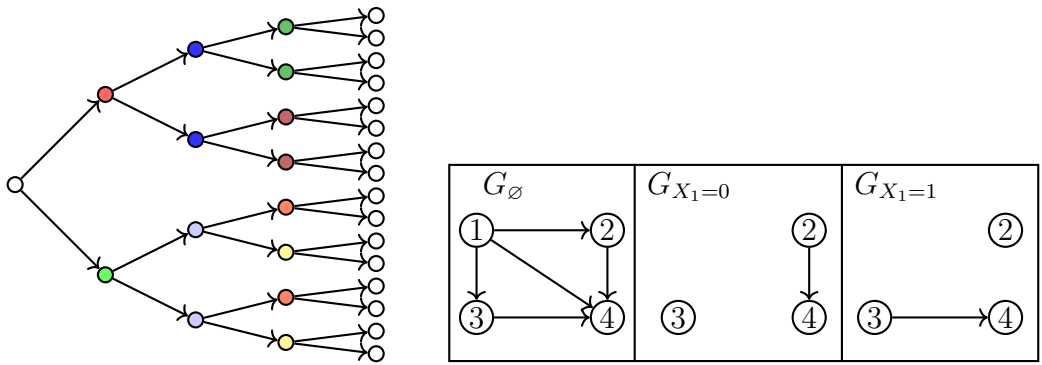

\begin{example}\label{ex:counterexample}
We consider the binary CStree in Figure~\ref{fig:counterexample} on $p=4$
binary random variables which is equivalently given by its three minimal context DAGs.
This CStree is balanced as one can check using Definition~\ref{def:balanced}, but the empty minimal context DAG $G_{\emptyset}$ is not perfect as the parents of $4$ do not form a complete graph. Therefore, a straightforward generalization of Theorem~\ref{thm:three_perfect_iff_balanced} is not true. 

This example can also  be generalized to get more counterexamples for any $p\ge 4$ and with an arbitrarily large number of minimal context. We may note that the statement $X_2\independent X_3|X_1$ (which prevents the parents of $4$ from forming a complete graph) is implied by the other two minimal contexts using absorption. We reveal why this happens in Section~\ref{sec:algebra}.
\end{example}

\begin{remark}
    In the case $p=4$ the CStree in Figure~\ref{fig:counterexample} is essentially the only binary balanced CStree with a non-perfect minimal context (up to swapping the outcomes of $X_1$ in the minimal contexts). If we do not restrict to binary CStrees there exists a family of such CStrees with a non-perfect context DAG, it can be constructed similarly to Example~\ref{ex:p=3}.
\end{remark}

\begin{remark}
Another characterizing property of
decomposable graphical models is in terms of its
maximum likelihood estimator (MLE).
Decomposable graphical models  are the only class of undirected graphical models whose MLE is a rational function \cite[Theorem 4.4]{GMS06}. The MLE of a  Decomposable CSmodel is also a rational function, this follows from the fact that they are a subclass of staged tree models and staged tree models always have this property \cite[]{DMS20}.
\end{remark}

\section{Combinatorial properties of balanced CStrees}\label{sec:combinatorics}
First, we study context subtrees of CStrees to understand which properties of CStrees are preserved when restricting to specific contexts. It turns out that any context subtree of a balanced CStree is itself balanced (Theorem~\ref{thm:balanced-for-every-context}) which can be seen as a generalization of the fact that removing a vertex from a perfect DAG results in a perfect~DAG.

Second, we saw in Example~\ref{ex:counterexample} that a CStree can be balanced without its minimal context
DAGs being perfect. The reverse implication does hold, i.e. if all minimal context DAGs are perfect, then the CStree is balanced (Theorem~\ref{thm:perf_implies_balanced}). The proof is mostly combinatorial in nature and does not make use of algebraic methods other than the definition of balancedness.
Lastly, we establish Proposition~\ref{prop:weird_formula} which will be used in the main proof of the last section. It gives an interpretation of the staging of a CStree in terms of CSI statements, as well as combinatorial conditions on minimal context DAGs for stagings to exist.

\subsection{Context subtrees}
We refer the reader back to Section~\ref{subsec:contextDAGs} for the formal definition of a context subtree and give an illustrative example here. 
For any context $X_{C}=\xx_C$, the subtree $\TT_{X_C=\xx_C}$ is
a CStree, the DAG $G_{X_{C}=\xx_C,\varnothing}$ denotes the empty context
DAG of $\TT_{X_C=\xx_C}$.

\begin{example}\label{ex:context_subtree}
    We consider the CStree $\TT$ in Figure~\ref{fig:motivatingEx} and construct the context subtree $\TT_{X_3=0}$ given in Figure~\ref{fig:context_subtree}. We remove all subtrees with root $x_1x_2 1$ and $x_1,x_2\in \{0,1\}$ and contract the edges $x_1x_2\to x_1x_2 0$. The stage of the node resulting from this contraction is the stage of the node $x_1x_2 1$. The stages of level 2 do not exist anymore and they do not have any meaning in the construction of the context subtree.
    We could now construct the minimal context DAGs from this CStree. However, we will instead do this from the minimal context DAGs of the full tree.
    We check if any minimal context is invalid in the case $X_3=0$, i.e. is only valid for $X_3=1$, and discard this DAG. This however is not the case here. Now we remove the node $3$ from any minimal context DAG, resulting in the collection of DAGs in Figure~\ref{fig:context_subtree}.
    In this case all non-empty contexts are in fact minimal contexts of the context subtree $\TT_{X_3=0}$, however this is not true in general.

    Moreover, we see that this context subtree $\TT_{X_3=0}$ is different from the tree $\TT_{G_{X_3=0,\emptyset}}$ (the staged tree representation of the empty context DAG) of which every stage is a singleton.
\end{example}

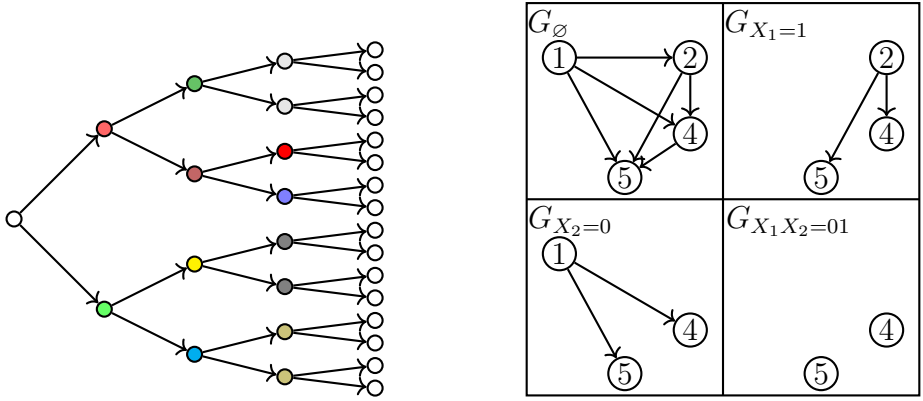
\begin{figure}[t]
    \centering
\begin{tikzpicture}[thick,scale=0.3]

 	 \node[circle, draw, fill=black!0, inner sep=2pt, minimum width=1pt] (w3) at (0,0)  {};
 	 \node[circle, draw, fill=black!0, inner sep=2pt, minimum width=1pt] (w4) at (0,-1) {};
 	 \node[circle, draw, fill=black!0, inner sep=2pt, minimum width=1pt] (w5) at (0,-2) {};
 	 \node[circle, draw, fill=black!0, inner sep=2pt, minimum width=1pt] (w6) at (0,-3) {};
 	 \node[circle, draw, fill=black!0, inner sep=2pt, minimum width=1pt] (v3) at (0,-4)  {};
 	 \node[circle, draw, fill=black!0, inner sep=2pt, minimum width=1pt] (v4) at (0,-5) {};
 	 \node[circle, draw, fill=black!0, inner sep=2pt, minimum width=1pt] (v5) at (0,-6) {};
 	 \node[circle, draw, fill=black!0, inner sep=2pt, minimum width=1pt] (v6) at (0,-7) {};
 	 \node[circle, draw, fill=black!0, inner sep=2pt, minimum width=2pt] (w3i) at (0,-8)  {};
 	 \node[circle, draw, fill=black!0, inner sep=2pt, minimum width=2pt] (w4i) at (0,-9) {};
 	 \node[circle, draw, fill=black!0, inner sep=2pt, minimum width=2pt] (w5i) at (0,-10) {};
 	 \node[circle, draw, fill=black!0, inner sep=2pt, minimum width=2pt] (w6i) at (0,-11) {};
 	 \node[circle, draw, fill=black!0, inner sep=2pt, minimum width=2pt] (v3i) at (0,-12)  {};
 	 \node[circle, draw, fill=black!0, inner sep=2pt, minimum width=2pt] (v4i) at (0,-13) {};
 	 \node[circle, draw, fill=black!0, inner sep=2pt, minimum width=2pt] (v5i) at (0,-14) {};
 	 \node[circle, draw, fill=black!0, inner sep=2pt, minimum width=2pt] (v6i) at (0,-15) {};

	 \node[circle, draw, fill=black!10, inner sep=2pt, minimum width=2pt] (w1) at (-4,-.5) {};
 	 \node[circle, draw, fill=black!10, inner sep=2pt, minimum width=2pt] (w2) at (-4,-2.5) {}; 
 	 \node[circle, draw, fill=red, inner sep=2pt, minimum width=2pt] (v1) at (-4,-4.5) {};
 	 \node[circle, draw, fill=blue!50, inner sep=2pt, minimum width=2pt] (v2) at (-4,-6.5) {};	 
	 \node[circle, draw, fill=black!50, inner sep=2pt, minimum width=2pt] (w1i) at (-4,-8.5) {};
 	 \node[circle, draw, fill=black!50, inner sep=2pt, minimum width=2pt] (w2i) at (-4,-10.5) {};
 	 \node[circle, draw, fill=olive!50, inner sep=2pt, minimum width=2pt] (v1i) at (-4,-12.5) {};
 	 \node[circle, draw, fill=olive!50, inner sep=2pt, minimum width=2pt] (v2i) at (-4,-14.5) {};

 	 \node[circle, draw, fill=green!60!black!60, inner sep=2pt, minimum width=2pt] (w) at (-8,-1.5) {};
 	 \node[circle, draw, fill=red!60!black!60, inner sep=2pt, minimum width=2pt] (v) at (-8,-5.5) {};	
 	 \node[circle, draw, fill=yellow, inner sep=2pt, minimum width=2pt] (wi) at (-8,-9.5) {};
 	 \node[circle, draw, fill=cyan, inner sep=2pt, minimum width=2pt] (vi) at (-8,-13.5) {};
	 
     \node[circle, draw, fill=red!60, inner sep=2pt, minimum width=2pt] (r) at (-12,-3.5) {};
 	 \node[circle, draw, fill=green!60, inner sep=2pt, minimum width=2pt] (ri) at (-12,-11.5) {};

 	 \node[circle, draw, fill=black!0, inner sep=2pt, minimum width=2pt] (I) at (-16,-7.5) {};

	%---EDGES---	 
 	 \draw[->]   (I) --    (r) ;
 	 \draw[->]   (I) --   (ri) ;

 	 \draw[->]   (r) --   (w) ;
 	 \draw[->]   (r) --   (v) ;

 	 \draw[->]   (w) --  (w1) ;
 	 \draw[->]   (w) --  (w2) ;

 	 \draw[->]   (w1) --   (w3) ;
 	 \draw[->]   (w1) --   (w4) ;
 	 \draw[->]   (w2) --  (w5) ;
 	 \draw[->]   (w2) --  (w6) ;

 	 \draw[->]   (v) --  (v1) ;
 	 \draw[->]   (v) --  (v2) ;

 	 \draw[->]   (v1) --  (v3) ;
 	 \draw[->]   (v1) --  (v4) ;
 	 \draw[->]   (v2) --  (v5) ;
 	 \draw[->]   (v2) --  (v6) ;

 	 \draw[->]   (ri) --   (wi) ;
 	 \draw[->]   (ri) -- (vi) ;

 	 \draw[->]   (wi) --  (w1i) ;
 	 \draw[->]   (wi) --  (w2i) ;

 	 \draw[->]   (w1i) --  (w3i) ;
 	 \draw[->]   (w1i) -- (w4i) ;
 	 \draw[->]   (w2i) --  (w5i) ;
 	 \draw[->]   (w2i) --  (w6i) ;

 	 \draw[->]   (vi) --  (v1i) ;
 	 \draw[->]   (vi) --  (v2i) ;

 	 \draw[->]   (v1i) --  (v3i) ;
 	 \draw[->]   (v1i) -- (v4i) ;
 	 \draw[->]   (v2i) -- (v5i) ;
 	 \draw[->]   (v2i) --  (v6i) ;

\end{tikzpicture}
\hspace{1.5cm}
\begin{tikzpicture}[thick,scale=0.29]
	\draw (0,0) -- (18,0) -- (18,-18) -- (0,-18) -- cycle;
    \draw (9,0) -- (9,-18);
    \draw (0,-9) -- (18,-9);

	%---NODES---
	%empty context	 
	 \node[circle, draw, fill=black!0, inner sep=1pt, minimum width=1pt] (1v1) at (1.5,-2.5) {$1$};
 	 \node[circle, draw, fill=black!0, inner sep=1pt, minimum width=1pt] (1v2) at (7.5,-2.5) {$2$};
 	 \node[circle, draw, fill=black!0, inner sep=1pt, minimum width=1pt] (1v4) at (7.5,-6) {$4$}; 	 
 	 \node[circle, draw, fill=black!0, inner sep=1pt, minimum width=1pt] (1v5) at (4.5,-8) {$5$};

	%X1=1
 	 \node[circle, draw, fill=black!0, inner sep=1pt, minimum width=1pt] (2v2) at (16.5,-2.5) {$2$};
 	 \node[circle, draw, fill=black!0, inner sep=1pt, minimum width=1pt] (2v4) at (16.5,-6) {$4$}; 	 
 	 \node[circle, draw, fill=black!0, inner sep=1pt, minimum width=1pt] (2v5) at (13.5,-8) {$5$};

	%X2=0
	 \node[circle, draw, fill=black!0, inner sep=1pt, minimum width=1pt] (3v1) at (1.5,-11.5) {$1$};
 	 \node[circle, draw, fill=black!0, inner sep=1pt, minimum width=1pt] (3v4) at (7.5,-15) {$4$}; 	 
 	 \node[circle, draw, fill=black!0, inner sep=1pt, minimum width=1pt] (3v5) at (4.5,-17) {$5$};
 	 
 	 %X1X2=01
 	 \node[circle, draw, fill=black!0, inner sep=1pt, minimum width=1pt] (4v4) at (16.5,-15) {$4$}; 	 
 	 \node[circle, draw, fill=black!0, inner sep=1pt, minimum width=1pt] (4v5) at (13.5,-17) {$5$};

	%---EDGES---	 
 	 \draw[->]   (1v1) -- (1v2) ;
 	 \draw[->]   (1v1) -- (1v4) ;
 	 \draw[->]   (1v1) -- (1v5) ;
 	 \draw[->]   (1v2) -- (1v4) ;
 	 \draw[->]   (1v2) -- (1v5) ;
 	 \draw[->]   (1v4) -- (1v5) ;

 	 \draw[->]   (2v2) -- (2v4) ;
 	 \draw[->]   (2v2) -- (2v5) ;
 	 
 	 \draw[->]   (3v1) -- (3v4) ;
 	 \draw[->]   (3v1) -- (3v5) ;

	 %---LABELS---
	 \node at (1,-1) {$G_{\emptyset}$} ;
	 \node at (11,-1) {$G_{X_1=1}$} ;
	 \node at (2,-10) {$G_{X_2 = 0}$} ;
	 \node at (12,-10) {$G_{X_1X_2 = 01}$} ;
\end{tikzpicture}
\caption{\small{The context subtree of the tree in Figure~\ref{fig:motivatingEx} for the context $X_3=0$, and its minimal context DAGs.}}
\label{fig:context_subtree}
\end{figure}

\begin{lemma}
\label{lem:rooted_subtrees}
Suppose $\TT$ is a CStree with levels $(L_1,\ldots,L_p)\sim (X_1,\ldots, X_p)$.
\begin{itemize}
    \item[(1)] Every stage in $\TT_{G_\emptyset}$  is a subset of
    a stage in $\TT$.
    \item[(2)] Suppose $C\subset [p]$ is a context with maximum index $k$ and
    let $v=x_1\dots x_{q}\in V_\TT$ be a vertex of $\TT$ with $k\leq q$. 
    %\textcolor{red}{is $k=q$ allowed? I think so?!} 
    Then for any $\xx_{C}\in \RR_{C}$ such that $(\xx_C)_i=x_i$, we have the equality $\TT_v=(\TT_{X_{C}=\xx_{C}})_v$. 
    Since $t_\TT(v)$ only depends on the subtree $\TT_v$, it follows that $t_\TT(v)=t_{\TT_{X_{C'}=\xx_{C'}}}(v)\in\R[\Theta_{\TT_v}]$.
\end{itemize}
\end{lemma}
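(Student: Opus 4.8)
\emph{Proof plan.} The two statements are independent, so I would prove them in turn. For \textbf{(1)}, the key point is that $\TT_{G_\varnothing}$ has the same underlying tree as $\TT$ and, by the construction in Example~\ref{ex:stagedtree_ofDAG}, its stages in level $L_{k-1}$ are exactly the sets $S_{\xx_{\pa(k)}}=\{\xx_{\pa(k)}\yy\colon\yy\in\RR_{[k-1]\setminus\pa(k)}\}$, one for each $\xx_{\pa(k)}\in\RR_{\pa(k)}$, where $\pa(k)$ is the parent set of $k$ in $G_\varnothing$. If $\varnothing\notin\CC_\TT$, then $G_\varnothing$ is the complete DAG on $[p]$, every such $S_{\xx_{\pa(k)}}$ is a singleton, and there is nothing to prove. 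If $\varnothing\in\CC_\TT$, then $G_\varnothing$ is an I-MAP of $\J(\varnothing)$ whose topological ordering is $1<\cdots<p$; since $\pa(k)\subseteq[k-1]$ and $[k-1]\setminus\pa(k)$ is then contained in the set $\nd(k)$ of non-descendants of $k$ in $G_\varnothing$, the local Markov statement at $k$ together with the decomposition axiom yields
\[
X_k\independent X_{[k-1]\setminus\pa(k)}\mid X_{\pa(k)}\ \in\ \I(G_\varnothing)\ \subseteq\ \J(\varnothing)\ \subseteq\ \J(\TT).
\]
Every statement obtained from Lemma~\ref{independence-statements-lemma} by the (sound) CSI axioms holds for all $f\in\MM(\TT)$, so this CI statement and each of its specializations $X_k\independent X_{[k-1]\setminus\pa(k)}\mid X_{\pa(k)}=\xx_{\pa(k)}$ is valid in $\MM(\TT)$; in particular, for every $f\in\MM(\TT)$ the conditional distributions of $X_k$ at all vertices of $S_{\xx_{\pa(k)}}$ coincide. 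I would then close by using that two vertices lying in distinct stages of a staged tree carry disjoint sets of edge labels, which are free up to the per-vertex normalizations $\sum_{e\in E(v)}\theta(e)=1$; hence one can choose parameters making the two corresponding conditionals of $X_k$ differ, and so the coincidence of all these conditionals throughout the model forces $S_{\xx_{\pa(k)}}$ to lie inside a single stage of $\TT$.

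For \textbf{(2)}, I would argue directly from the construction of the context subtree recalled in Section~\ref{subsec:contextDAGs}. The tree $\TT_{X_C=\xx_C}$ is obtained from $\TT$ by, for each $j\in C$, deleting all subtrees and edges whose $j$-th coordinate differs from $(\xx_C)_j$ and then contracting the unique surviving edge at level $j$, with all other vertices, edges, edge labels and stages inherited unchanged from $\TT$. Every index in $C$ is at most $k=\max C\le q$, so none of these deletions or contractions touches a vertex or an edge at a level strictly larger than $q$. Because $(\xx_C)_i=x_i$ for all $i\in C$, the vertex $v=x_1\cdots x_q$ survives every pruning and only the root-to-$v$ path gets shortened by the contractions, while the subtree hanging below $v$ is left untouched; thus $\TT_v=(\TT_{X_C=\xx_C})_v$ as rooted, edge-labeled, staged trees. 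Since
\[
t_\TT(v)=\sum_{\zz\in\RR_{[p]\setminus[q]}}\Big(\prod_{e\in E(v\to v\zz)}\theta(e)\Big)
\]
involves only labels of edges of $\TT_v$, this identification of subtrees immediately gives $t_\TT(v)=t_{\TT_{X_C=\xx_C}}(v)$ in $\R[\Theta_{\TT_v}]$.

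The step I expect to be delicate is the last move in (1): deducing that $S_{\xx_{\pa(k)}}$ lies in a single stage of $\TT$ from the mere validity in $\MM(\TT)$ of the relevant CSI statements. One must check that two vertices in different stages genuinely have different $X_k$-conditionals at some point of the parameter space $\Theta_\TT$ — not only at a generic point — which rests on the fact that the sole relations among the edge labels are the normalizations, and these never link labels of distinct stages. Part (2), by contrast, is a routine unwinding of the context-subtree construction once one observes that it modifies $\TT$ only at levels $\le\max C$.
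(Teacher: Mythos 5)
Your proposal is correct and mirrors the paper's own proof: for (1) both arguments identify a stage of $\TT_{G_\emptyset}$ at level $k-1$ as $S_{\xx_{\pa(k)}}$, obtain the CI statement $X_k\independent X_{[k-1]\setminus\pa(k)}\mid X_{\pa(k)}$ from the Markov property of $G_\emptyset$, specialize it to the context $X_{\pa(k)}=\xx_{\pa(k)}$, and conclude that $S_{\xx_{\pa(k)}}$ sits inside a stage of $\TT$, while (2) is the same routine unwinding of the context-subtree construction (nothing below level $\max C\le q$ is deleted or contracted, so the labeled subtree at $v$ and hence $t(v)$ are unchanged). The only divergence is bookkeeping: where the paper cites \cite[Theorem 3.3]{DS22} for ``the statement holds in $\TT$'' and leaves the final step (validity of the specialized statement forces the vertices of $S_{\xx_{\pa(k)}}$ into a single stage) implicit, you reach the statement via $\I(G_\emptyset)\subseteq\J(\emptyset)\subseteq\J(\TT)$ and soundness of the CSI axioms, and close with an explicit free-parameter argument using the disjointness of label sets of distinct stages --- a correct, self-contained way to fill in exactly the step you flag as delicate.
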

\begin{proof}
(1) Let $S$ be a stage in $\TT_{G_\emptyset}$ immediately preceding the level of the variable $X_k$, $k\in [p]$. Since $\TT_{G_\emptyset}$
represents a DAG, the stage defining context $X_{A}=\xx_A$ of $S$ satisfies
$A=\pa_{G_\emptyset}(k)$ and $\xx_{A}\in \RR_{A}$. Thus, as
a subset of vertices in $\TT$,
\[
S=\bigcup_{\yy \in \RR_{[k-1]\setminus A}} \{\xx_{A}\yy \}
\]
for some $\xx_A\in\RR_A$.
By the ordered Markov property in $G_\emptyset$, $G_\emptyset$ encodes
the CI relation $X_{k}\independent X_{[k-1]\setminus A}|X_{A}$. This
CI statement in $G_\emptyset$ corresponds to the CI statement
$X_{k}\independent X_{[k-1]\setminus A}|X_{A}$ in $\TT$.
Thus, by \cite[Theorem 3.3]{DS22} $X_{k}\independent X_{[k-1]\setminus A}|X_{A}$  holds in $\TT$. By  specialization to $X_{A}=\xx_A$, the statement $X_{k}\independent X_{[k-1]\setminus A}|X_{A}=\xx_A$ holds in $\TT$. The fact that this latter statement holds in $\TT$,
implies that the nodes in $S$ must be a subset of a stage in $\TT$.

(2) The vertices of the two trees are the same. A stage in the tree $\TT_v$ is defined by a statement $X_j\independent X_{[j-1]\setminus ([q]\cup D)}| X_D=\xx_D$ for some $j>q$ and $D\subset [j-1]\setminus [q]$. A stage in the tree $(\TT_{X_{C}=\xx_{C}})_v$ is defined by exactly the same kind of statement since $C\subset [k]\subset [q]$.
\end{proof}

Lemma~\ref{lem:rooted_subtrees} (1) says that every CStree $\TT$ is a coarsening of the CStree $\TT_{G_\emptyset}$, as every stage of $\TT$ is the union of possibly several stages in $\TT_{G_\emptyset}$. This coarsening is a result of other minimal context DAGs entailing more CSI statements. Hence, if $\TT=\TT_{G_\emptyset}$ all CSI statements implied by $\TT$ are specializations of CI statements also implied by $\TT$.

We recall a useful lemma to prove balancedness.
\begin{lemma}[{\cite[Lemma 3.2]{DS21}}]\label{lem:balancecheck}
Let $G=([p],E)$ be a DAG and assume $\pi=12\dotsm p$ is a linear extension of $G$. Then $\TT_{G}$ is balanced 
if and only if for every pair of vertices in the same stage with $v=x_1\dotsm x_i, w=x_1'\dotsm x_i'\in \RR_{\{i\}}$,
there exists a bijection
\begin{align*}
    \Phi:\RR_{[p]\setminus[i+1]}\times \RR_{[p]\setminus [i+1]}\ &\to \RR_{[p]\setminus[i+1]}\times \RR_{[p]\setminus [i+1]} \\
    (y_{i+2}\dotsm y_p,y_{i+2}'\dotsm y_p')&\mapsto (z_{i+2}\dotsm z_p,z_{i+2}'\dotsm z_p') 
\end{align*}
such that for all $k\ge i+2$ and all $s\neq r\in [d_{i+1}]$
\begin{equation*}
\begin{split}
f&(y_k\mid (x_1\cdots x_i,s,y_{i+2}\cdots y_p)_{\pa(k)})f(y_k^\prime\mid (x_1^\prime\cdots x_i^\prime,r,y_{i+2}^\prime\cdots y_p^\prime)_{\pa(k)}) \\
&= f(z_k\mid (x_1^\prime\cdots x_i^\prime,s,z_{i+2}\cdots z_p)_{\pa(k)})f(z_k^\prime\mid (x_1\cdots x_i,r,z_{i+2}^\prime\cdots z_p^\prime)_{\pa(k)}).
\end{split}
\end{equation*}
\end{lemma}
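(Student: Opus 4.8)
The plan is to make balancedness concrete for the staged tree $\TT_G$ and reduce it, level by level, to a monomial-matching statement from which the bijection in the lemma falls out. Recall from Example~\ref{ex:stagedtree_ofDAG} that in $\TT_G$ the outgoing edge of a vertex $x_1\cdots x_{k-1}$ toward the outcome $x_k$ of $X_k$ carries a label depending only on $(x_{\pa(k)},x_k)$; write $\theta^{(k)}_{x_k\mid x_{\pa(k)}}$ for it, so that $\Psi_{\TT_G}$ evaluates it to $f(x_k\mid x_{\pa(k)})$, and two level-$i$ vertices lie in a common stage exactly when their restrictions to $\pa(i+1)$ agree. Condition~(1) of Definition~\ref{def:cstree} forces every edge label of $\TT_G$ to belong to a unique level, so $\R[\theta(e)\colon e\in E_{\TT_G}]$ is graded by level; hence two of its monomials coincide iff for each $k$ their level-$k$ components coincide, and for a degree-two level-$k$ component this is just equality of the associated unordered pair of variables.

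In this notation, at a vertex $vs=x_1\cdots x_i s$ directly below $v=x_1\cdots x_i$ the interpolating polynomial expands as
\[
t(vs)=\sum_{\yy\in\RR_{[p]\setminus[i+1]}}\ \prod_{k=i+2}^{p}\theta^{(k)}_{y_k\mid (x_1\cdots x_i,\,s,\,\yy)_{\pa(k)}},
\]
a $\Z_{\ge 0}$-linear combination of monomials indexed by $\RR_{[p]\setminus[i+1]}$. Consequently, for a same-stage pair $v=x_1\cdots x_i$, $w=x_1'\cdots x_i'$ and outcomes $s,r\in[d_{i+1}]$, both $t(vs)\,t(wr)$ and $t(vr)\,t(ws)$ are $\Z_{\ge 0}$-linear combinations of monomials indexed by the finite set $\RR_{[p]\setminus[i+1]}^2$: in the first the index $\yy$ pairs with the configuration $(x_1\cdots x_i,s)$ and $\yy'$ with $(x_1'\cdots x_i',r)$, while in the second, written as $t(ws)\,t(vr)$, the index $\zz$ pairs with $(x_1'\cdots x_i',s)$ and $\zz'$ with $(x_1\cdots x_i,r)$ — exactly the two patterns appearing on either side of the displayed equation in the lemma.

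Now the key elementary fact: two polynomials that are $\Z_{\ge 0}$-linear combinations of monomials over finite index sets of equal cardinality are equal if and only if there is a bijection between those index sets matching each left monomial to its partner on the right (compare coefficients fiberwise and glue). Applying this to the identity $t(vs)\,t(wr)=t(vr)\,t(ws)$ produces a bijection $\Phi$ of $\RR_{[p]\setminus[i+1]}^2$ sending $(\yy,\yy')$ to some $(\zz,\zz')$ with equal monomials, and decomposing that equality level by level via the grading above, $\Phi(\yy,\yy')=(\zz,\zz')$ matches monomials if and only if for every $k\in\{i+2,\dots,p\}$
\[
\theta^{(k)}_{y_k\mid (x_1\cdots x_i,s,\yy)_{\pa(k)}}\,\theta^{(k)}_{y_k'\mid (x_1'\cdots x_i',r,\yy')_{\pa(k)}}=\theta^{(k)}_{z_k\mid (x_1'\cdots x_i',s,\zz)_{\pa(k)}}\,\theta^{(k)}_{z_k'\mid (x_1\cdots x_i,r,\zz')_{\pa(k)}},
\]
which is precisely the equation in the statement under the dictionary $\theta^{(k)}_{a\mid b}\leftrightarrow f(a\mid b)$. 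The converse direction is immediate: given $\Phi$ together with these levelwise equalities, multiply over $k$ to get equality of the full monomials, then sum over $(\yy,\yy')$ and reindex the right-hand sum by $\Phi$ to recover $t(vs)t(wr)=t(vr)t(ws)$.

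To finish, $\TT_G$ is balanced iff every pair of vertices in a common stage is balanced (Definition~\ref{def:balanced}); since the $s=r$ case is vacuous, this is the family of identities $t(vs)t(wr)=t(vr)t(ws)$ over $s\ne r$, equivalent by the previous paragraph to the existence of the bijection $\Phi$ of the lemma, and running this over all levels $i$ and all same-stage pairs completes the argument. I expect the main obstacle to be organizational rather than technical: the points needing care are verifying that $t(vs)$ has exactly the stated monomial expansion and that the parameter ring of $\TT_G$ is genuinely graded by level (so that monomial equalities may be tested level by level), both of which rest on the explicit staging of $\TT_G$, and then invoking the coefficient-comparison fact correctly. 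A minor subtlety is that a single $\Phi$ should serve all unordered pairs $\{s,r\}$ at once: this is automatic when $d_{i+1}=2$, and otherwise one either reads the conclusion per pair $(s,r)$ or assembles a uniform bijection from the pairwise ones, which is harmless for the uses of the lemma in this paper.
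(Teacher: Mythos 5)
The first thing to note is that the paper itself contains no proof of this statement: Lemma~\ref{lem:balancecheck} is imported verbatim from \cite[Lemma 3.2]{DS21}, so your argument can only be measured against the statement, not against an in-paper proof. Your core argument is the natural one and is essentially correct: in $\TT_G$ the label of the edge $x_1\cdots x_{k-1}\to x_1\cdots x_{k-1}x_k$ depends only on $(x_{\pa(k)},x_k)$, the ring $\R[\theta(e)\colon e\in E]$ is graded by level, $t(vs)$ expands as the sum over $\RR_{[p]\setminus[i+1]}$ of the monomials you write, and the identity $t(vs)t(wr)=t(vr)t(ws)$ of Definition~\ref{def:balanced} (which lives in the free label ring, so reading the $f(\cdot\mid\cdot)$ in the statement as formal edge labels is the correct interpretation) is equivalent, by comparing multisets of monomials and then splitting by level, to the displayed levelwise equations along a monomial-matching bijection of $\RR_{[p]\setminus[i+1]}^2$. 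The converse direction (bijection implies balanced) is also handled correctly.

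The genuine shortfall is the quantifier order you flag at the end but do not resolve. The statement asserts a single bijection $\Phi$ that serves \emph{all} ordered pairs $s\neq r$ simultaneously, whereas your fiberwise matching produces, for each fixed $(s,r)$, a bijection $\Phi_{s,r}$ adapted to the monomial fibers of the particular identity $t(vs)t(wr)=t(vr)t(ws)$; these fibers change with $(s,r)$, so gluing the $\Phi_{s,r}$ into one $\Phi$ is a simultaneous-matching problem that your argument does not address. Moreover, the claim that uniformity is ``automatic when $d_{i+1}=2$'' is not justified: the condition still ranges over both ordered pairs $(s,r)$ and $(r,s)$, and if $\Phi$ matches monomials for $(s,r)$, then the matching your construction yields for $(r,s)$ is the conjugate $\sigma\circ\Phi^{-1}\circ\sigma$, where $\sigma$ swaps the two factors of $\RR_{[p]\setminus[i+1]}^2$, which need not coincide with $\Phi$. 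So what you have proven is the per-pair variant of the lemma. That variant is all this paper actually uses (in the proof of Theorem~\ref{thm:balanced-for-every-context} the bijection is invoked for one fixed pair $s,r$ and one monomial at a time), and the implication from the bijection to balancedness is unaffected; but as a proof of the lemma as literally quoted there is a gap, which would have to be closed either by an explicit construction of a uniform $\Phi$ or by checking that the formulation in \cite{DS21} quantifies the bijection per pair.
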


\begin{theorem}\label{thm:balanced-for-every-context}
If a CStree $\TT$ is balanced, then so is $\TT_{X_C=\xx_C}$ for every context $X_C=\xx_C$.
\end{theorem}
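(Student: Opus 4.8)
The plan is to reduce to a single‑variable context and then track the interpolating polynomials through the edge‑contraction that defines the context subtree, using Lemma~\ref{lem:rooted_subtrees}(2) for the vertices at or below the contracted level and the balancedness of $\TT$ itself for those above it. The reduction rests on the fact that the construction is compositional: for $m\in C$ and $C'=C\setminus\{m\}$ one has $\TT_{X_C=\xx_C}=(\TT_{X_{C'}=\xx_{C'}})_{X_m=(\xx_C)_m}$, since pruning the branches with $x_j\neq(\xx_C)_j$ and contracting the surviving level‑$j$ edges commute for distinct indices $j$. Inducting on $|C|$, it then suffices to prove that $\TT':=\TT_{X_m=a}$ is balanced whenever $\TT$ is, for a single index $m$ and outcome $a\in[d_m]$.

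Recall that in $\TT'$ the level of $X_m$ is absorbed into the preceding one: a merged node $x_1\cdots x_{m-1}$ inherits the stage of $x_1\cdots x_{m-1}a$, and every other node keeps its $\TT$‑stage. The first case to settle is a same‑stage pair $v,w$ of $\TT'$ lying at a level $\geq m-1$. Identify $v,w$ with the nodes $\hat v,\hat w$ of $\TT$ they arise from — the level‑$m$ nodes $x_1\cdots x_{m-1}a$, $y_1\cdots y_{m-1}a$ at the merged level, and the nodes obtained by re‑inserting $a$ at coordinate $m$ at deeper levels. Then $v\sim_{\TT'}w$ forces $\hat v\sim_\TT\hat w$, and since $m$ is the maximal index of $C=\{m\}$ while $\hat v,\hat w$ and their children sit at levels $\geq m$, Lemma~\ref{lem:rooted_subtrees}(2) gives $\TT'_{vs}=\TT_{\hat vs}$, hence $t_{\TT'}(vs)=t_\TT(\hat vs)$, and likewise at the three other corners; so the vanishing of the $2\times2$ minor for $(v,w)$ in $\TT'$ is literally the balancedness identity for $(\hat v,\hat w)$ in $\TT$, which holds.

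The remaining case — a same‑stage pair $v=x_1\cdots x_\ell$, $w=y_1\cdots y_\ell$ with $\ell\leq m-2$, so $v\sim_\TT w$ and $vs,vr,ws,wr$ sit at level $\ell+1\leq m-1$ — is the heart of the argument, and I would treat it by induction on $p$. Passing to the rooted subtrees $\TT_{x_1}$ (balanced, with $p-1$ levels) and using $\TT'_{x_1}=(\TT_{x_1})_{X_m=a}$, the inductive hypothesis disposes of every same‑stage pair of $\TT'$ whose two vertices lie below a common level‑$1$ node. For the pairs meeting level $1$ or straddling two level‑$1$ subtrees one returns to $\TT$: grouping the leaves below $vs$ by the value $b$ of $X_m$ gives $t_\TT(vs)=\sum_b g_b(vs)$, and $t_{\TT'}(vs)$ is obtained from $g_a(vs)$ by deleting, at each level‑$(m-1)$ node $u$ below $vs$, the factor $\theta(u\to ua)$, which by condition (2) of Definition~\ref{def:cstree} depends only on the stage of $u$. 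The delicate point — and the main obstacle — is that the balancedness identity of $\TT$ at $(v,w)$ by itself does not survive this restriction‑and‑deletion; one must exploit the rigidity of balanced CStrees, namely that the $2\times2$ minors vanish not only at $(v,w)$ but throughout the subtree beneath it, so that the deletion is uniform across the stage of $v$ and the identity descends to $\TT'$. A cleaner alternative would be to first establish a bijective reformulation of balancedness for arbitrary CStrees in the spirit of Lemma~\ref{lem:balancecheck} and then restrict the bijection to the completions with $X_m=a$.
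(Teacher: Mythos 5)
Your reduction to a single contracted coordinate and your first case (same-stage pairs at or below the merged level, handled via the identification with nodes of $\TT$ and Lemma~\ref{lem:rooted_subtrees}(2)) are sound, but the theorem is not actually proved: the case you yourself call the heart of the argument --- a same-stage pair $v,w$ lying strictly above the contracted level --- is left as a sketch, and the sketch does not close. Observing that $t_{\TT'}(vs)$ is obtained from the sub-sum of $t_{\TT}(vs)$ over paths with $X_m=a$ by deleting the factors $\theta(u\to ua)$ is correct, but the appeal to ``rigidity'', namely that the $2\times 2$ minors vanish throughout the subtree beneath $(v,w)$, is both imprecise (nodes below $v$ and $w$ need not be in the same stage pairwise, so there are no minors to speak of there) and does not yield the claimed uniformity of the deletion or the descent of the identity to $\TT'$. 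The auxiliary induction on $p$ via level-$1$ subtrees also does not help: the ``straddling'' pairs it leaves over are exactly the hard case again, so nothing is gained.

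What is missing is precisely the content of the paper's proof, a term-by-term matching argument that works for an arbitrary context $C$ and all same-stage pairs at once, with no induction on $|C|$ and no case split by level. One takes a monomial of $t_{\TT_{X_C=\xx_C}}(v_1)\,t_{\TT_{X_C=\xx_C}}(w_2)$, lifts the corresponding pair of paths to $\TT$, and uses balancedness of $\TT$ together with the bijection of Lemma~\ref{lem:balancecheck} to produce a pair of paths contributing to $t_{\TT}(v_2)\,t_{\TT}(w_1)$ with the same label product. The key step is then that the tree is stratified and compatibly labeled: labels at different levels are distinct, and by condition (2) of Definition~\ref{def:cstree} a label determines the outcome its edge points to. Hence the equality of products forces equality of label pairs level by level, and at every level $j+1\in C$ the matched paths must point to the prescribed outcome $(\xx_C)_{j+1}$; they therefore survive in the context subtree, and after discarding the contracted-edge factors the two products still agree, giving the minor identity in $\TT_{X_C=\xx_C}$. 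This is exactly the ``restrict the bijection to the completions with $X_m=a$'' alternative you mention at the end, but verifying that the bijection respects the context is the nontrivial point, and your proposal does not supply that verification.
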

\begin{proof}
Let $\TTb:=\TT_{X_C=\xx_C}$. Let $k\in [p]\setminus C$ and suppose
$v=x_1\ldots x_{k-1}$ and $w=y_1\cdots y_{k-1} $ are two vertices in the same stage in $\TTb$ with $x_i=y_i$ for $i\in C\cap [k-1]$. Note that $v,w$ are also in the same stage in $\TT$ since $k\notin C$.

Then their children in $\TT$ and $\TTb$ are
 \begin{align*}
\ch(v)&=\{x_1\cdots x_{k-1}s: s\in [d_{k}]\},\\
\ch(w)&=\{y_1\cdots y_{k-1}s : s\in[d_{k}] \}.
\end{align*}
Let $v_1,v_2\in \ch(v)$ and $w_1,w_2\in \ch(w)$ be such that $\theta(v\to v_i)=\theta(w\to w_i)$, $(i=1,2)$. Since CStrees are compatibly labeled, then
\begin{align*}
v_1=&\,\,x_1\cdots x_{k-1}s , & v_2=&\,\,x_1\cdots x_{k-1}r, \\
w_1=&\,\,y_1\cdots y_{k-1}s , & w_2=&\,\,y_1\cdots y_{k-1}r, 
\end{align*}
for some $s,r \in [d_k]$.
We want to show $t_{\TTb}(v_1)t_{\TTb}(w_2)=t_{\TTb}(w_1)t_{\TTb}(v_2)$. Choose a monomial on the left-hand-side. This corresponds to a product of edge labels of two paths $\lambda_1'$ and $\lambda_2'$ in $\TTb$, $\lambda_1'$ is a path from $v_1$ to a leaf and $\lambda_2'$ is a path from $w_2$ to a leaf. Each leaf in  $\TTb$ is
also a leaf in $\TT$ (Section~\ref{sec:cstrees}).
In $\TT$ there exists a directed path $\lambda_1$ from $v_1$ to the leaf using all edges in $\lambda_1'$ and a directed path $\lambda_2$ from $w_2$ to the other leaf using $\lambda_2'$.

Since $v,w$ are in the same stage in $\TTb$, they are also in the same stage in $\TT$. The balanced condition in $\TT$ implies
\begin{align}
t_{\TT}(x_1\dotsm x_{k-1}s)t_{\TT}(y_1\dotsm y_{k-1}r)=t_{\TT}(x_1\dotsm x_{k-1}r)t_{\TT}(y_1\dotsm y_{k-1}s). \label{eq:balanced-for-every-context}
\end{align}
Choose the product of monomials on the left hand side of this equation
that is the product of the edge labels in the concatenation of paths $\lambda_1\lambda_2$ and  denote it by 
$\theta(\lambda_1)\theta(\lambda_2)$.
Since $\TT$ is balanced, it follows from  the bijection in Lemma~\ref{lem:balancecheck} that there exists a  product 
$\theta(\lambda_3)\theta(\lambda_4)$ corresponding to paths $\lambda_3,\lambda_4$ in $\TT$ from $v_2$ to a leaf and $w_1$ to a leaf on the right-hand side of (\ref{eq:balanced-for-every-context}) such that 
\begin{align}
\label{prod}
\theta(\lambda_1)\theta(\lambda_2)=\theta(\lambda_3)\theta(\lambda_4).
\end{align}
We claim that the paths $\lambda_3,\lambda_4$ are paths in $\TTb$, i.e.
the nodes in the paths $\lambda_3,\lambda_4$ contract to nodes in $\TTb$.
Let $j\in [p]\setminus [k]$ and denote by $e_{i,j}$ the edge of the path $i$, ($i=1,2,3,4$) from level $j$ to level $j+1$. The fact that $\TT$
is stratified and (\ref{prod}) holds, implies 
\begin{align} \label{eq:strat}
\theta(e_{1,j})\theta(e_{2,j})=\theta(e_{3,j})\theta(e_{4,j}) \text{ for all } j\in [p]\setminus [k].
\end{align}
If $j+1\in C$ then the edges $e_{1,j}, e_{2,j}$ point to the same outcome $(\xx_C)_{j+1}$. From (\ref{eq:strat}) and since $\TT$ is
compatibly labeled, different outcomes can never have equal edge labels, thus $e_{3,j}, e_{4,j}$ must also point to the outcome $(\xx_C)_{j+1}$.
This shows $\lambda_3,\lambda_4$ are paths in $\TTb$. 

Finally, if $j+1\notin C$ then  (\ref{eq:strat}) implies that the product of the edge labels of the restrictions $\lambda_3', \lambda_4'$ of $\lambda_3,\lambda_4$ to paths in $\TTb$ is equal to the product of the edge labels of $\lambda_1',\lambda_2'$. This establishes a bijection between terms on the right-hand side  and the left-hand side of 
$t_{\TTb}(v_1)t_{\TTb}(w_2)=t_{\TTb}(w_1)t_{\TTb}(v_2)$, which means $\TTb$ is balanced.
\end{proof}

\subsection{Decomposable DAG models and decomposable CSmodels}
We start  by proving the one-sided generalization of Theorem~\ref{thm:three_perfect_iff_balanced} and Theorem~\ref{thm:perfect-iff-balanced-iff-decomposable} to CStrees.

\begin{theorem}
\label{thm:perf_implies_balanced}
Let $\TT$ be a CStree with only perfect minimal contexts. Then $\TT$ is balanced.
\end{theorem}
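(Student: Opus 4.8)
The plan is to verify the balancedness condition of Definition~\ref{def:balanced} separately for each pair of vertices lying in a common stage, reducing every such instance to the DAG case already settled in Theorem~\ref{thm:perfect-iff-balanced-iff-decomposable} by routing through a suitable context subtree.

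Fix a stage $S\subseteq L_{k-1}$ of $\TT$ and two vertices $v,w\in S$ (if $v=w$ there is nothing to prove). Let $X_D=\xx_D$ be the stage-defining context of $S$, so $D\subseteq[k-1]$, both $v$ and $w$ restrict to $\xx_D$ on $D$, and by Lemma~\ref{independence-statements-lemma} the statement $X_k\independent X_{[k-1]\setminus D}\mid X_D=\xx_D$ lies in $\J(\TT)$. Using the decomposition $\J(\TT)=\bigcup_{X_C=\xx_C\in\CC_\TT}\J(X_C=\xx_C)$ together with the specialization axiom (Section~\ref{subsec:contextDAGs}), I would produce a minimal context $X_C=\xx_C$ with $C\subseteq D$ and $\xx_C=\xx_D|_C$ for which $X_k\independent X_{[k-1]\setminus D}\mid X_{D\setminus C}$ is a valid $d$-separation statement of the minimal context DAG $G_{X_C=\xx_C}$. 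Since $G_{X_C=\xx_C}$ is perfect by hypothesis, a one-line $d$-separation argument (an edge from $[k-1]\setminus D$ into $k$ would be an unblocked path, conditioning only on $X_{D\setminus C}$) forces $\pa_{G_{X_C=\xx_C}}(k)\subseteq D\setminus C$; in particular $v$ and $w$ restrict to the same value on $C$ and on $\pa_{G_{X_C=\xx_C}}(k)$.

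Next I would transport balancedness of the pair $(v,w)$ backwards along $\TT_{G_{X_C=\xx_C}}\leadsto\TT_{X_C=\xx_C}\leadsto\TT$. By Theorem~\ref{thm:perfect-iff-balanced-iff-decomposable}, $\TT_{G_{X_C=\xx_C}}$ is balanced; since the stage preceding $X_k$ in $\TT_{G_{X_C=\xx_C}}$ is indexed by the value of $X_{\pa_{G_{X_C=\xx_C}}(k)}$, the vertices $v$ and $w$ (read on the coordinates $[p]\setminus C$) lie in one stage there, so $(v,w)$ is a balanced pair in $\TT_{G_{X_C=\xx_C}}$. Now $\TT_{X_C=\xx_C}$ is a CStree whose empty context DAG is again $G_{X_C=\xx_C}$, so by Lemma~\ref{lem:rooted_subtrees}(1) its staging is a coarsening of that of $\TT_{G_{X_C=\xx_C}}$; substituting the resulting identifications of edge labels into the (polynomial) balancedness identity for $(v,w)$ in $\TT_{G_{X_C=\xx_C}}$ yields $t_{\TT_{X_C=\xx_C}}(vs)\,t_{\TT_{X_C=\xx_C}}(wr)=t_{\TT_{X_C=\xx_C}}(vr)\,t_{\TT_{X_C=\xx_C}}(ws)$ for all $s,r\in[d_k]$. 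Finally, because $C\subseteq[k-1]$, each child $vs$ of $v$ (and $ws$ of $w$) sits at level $k>\max C$, so Lemma~\ref{lem:rooted_subtrees}(2) gives $t_\TT(vs)=t_{\TT_{X_C=\xx_C}}(vs)$, and similarly for $w$; hence $(v,w)$ is balanced in $\TT$. As $S,v,w$ were arbitrary, $\TT$ is balanced.

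I expect the first step to be the main obstacle: making precise which minimal context $X_C=\xx_C$ is ``responsible'' for the stage $S$ and squeezing out the inclusion $\pa_{G_{X_C=\xx_C}}(k)\subseteq D\setminus C$. This is bookkeeping around how $\CC_\TT$ is generated from the stages via the absorption axiom and around the minimal-I-MAP description of the context DAGs, rather than anything deep. The remaining steps are routine provided one checks that the vertex- and label-identifications among $\TT$, $\TT_{X_C=\xx_C}$ and $\TT_{G_{X_C=\xx_C}}$ are mutually compatible --- the edge contractions defining the context subtree only touch levels strictly above $vs$, which is precisely what makes Lemma~\ref{lem:rooted_subtrees} applicable --- and that the label substitution respects the sum-to-one relations on the parameter spaces.
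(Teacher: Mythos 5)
Your proposal follows essentially the same route as the paper's proof: pass from the stage-defining context to a minimal context $X_{C'}=\xx_{C'}$ contained in it (the paper cites \cite[Lemma 3.2]{DS22} for exactly the step you flag as the main obstacle), note that $v,w$ agree on $\pa_{G_{X_{C'}=\xx_{C'}}}(k)\subseteq D\setminus C'$ and hence lie in one stage of $\TT_{G_{X_{C'}=\xx_{C'}}}$, invoke perfectness via Theorem~\ref{thm:perfect-iff-balanced-iff-decomposable} to get the balanced identity there, push it through the label-identifying surjection onto $\R[\Theta_{\TT_{X_{C'}=\xx_{C'}}}]$, and finish with Lemma~\ref{lem:rooted_subtrees}(2). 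The argument is correct, with only cosmetic differences (your appeal to perfectness in the $d$-separation step is superfluous, and you apply Lemma~\ref{lem:rooted_subtrees}(2) at the children rather than at $v,w$ themselves).
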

\begin{proof}
Let $v=x_1\dots x_{k-1},w=y_1\dots y_{k-1}\in V_\TT$ be two vertices in the same stage $S$ in $\TT$. Then $S$  has a stage defining context $C$ that entails the CSI relation
\[
X_k\independent X_{[k-1]\setminus C}| X_C=\xx_C
\]
for some $\xx_C\in\RR_C$. By definition  $C\subset [k-1]$, and from  \cite[Lemma 3.2]{DS22} there exists a minimal context $C'\subset C$ such that
\[
X_k\independent X_{[k-1]\setminus C}| X_{C\setminus C'}, X_{C'}=\xx_{C'}
\]
holds with $\xx_{C'}=(\xx_C)_{C'}$. Every node in $S$ contains the
context $\xx_{C'}$, hence  every node in $S$ appears in $\TT_{X_{C'}=\xx_{C'}}$ and by construction  $S$ is a stage in $\TT_{X_{C'}=\xx_{C'}}$.
We claim that $v$ and $w$ are also in the same stage in $\TT_{G_{X_{C'}=\xx_{C'}}}$:

By \cite[Proposition 2.2]{DS21}, this holds if and only if $(v)_{\pa_{G_{X_{C'}=\xx_{C'}}}(k)}=(w)_{\pa_{G_{X_{C'}=\xx_{C'}}}(k)}$. That is, the entries of $v$ and $w$ agree for the indices in $\pa_{G_{X_{C'}=\xx_{C'}}}(k)$.
Let $i\in \pa_{G_{X_{C'}=\xx_{C'}}}(k)$ then $X_k\not\independent X_i| X_{C'}=\xx_{C'}$. Therefore $i\notin [k-1]\setminus C$, i.e. $i\in C\setminus C'$ because we are in the context $X_{C'}=\xx_{C'}$.
Since $C$ is the stage defining context of $S$, we have $x_i=y_i$.

Since $G_{X_{C'}=\xx_{C'}}$ is perfect by assumption, the nodes $v,w$ are balanced in the CStree $\TTb:=\TT_{G_{X_{C'}=\xx_{C'}}}$ by \cite[Theorem 3.1]{DS21}.
This means that for any $v_1,v_2\in\children_{\TTb}(v)$ and $w_1,w_2\in\children_{\TTb}(w)$ with $\theta_{\TTb}(v\to v_i)=\theta_{\TTb}(w\to w_i),\, (i=1,2)$ the following equation holds
\[
t_{\TTb}(v_1)t_{\TTb}(w_2)=t_{\TTb}(v_2)t_{\TTb}(w_1)
\]
in $\R[\Theta_{\TTb}]$. Since there is a surjective ring homomorphism $\R[\Theta_{\TTb}]\to \R[\Theta_{\TT_{X_{C'}=\xx_{C'}}}]$ the same equation 
\[
t_{\TT_{X_{C'}=\xx_{C'}}}(v_1)t_{\TT_{X_{C'}=\xx_{C'}}}(w_2)=t_{\TT_{X_{C'}=\xx_{C'}}}(v_2)t_{\TT_{X_{C'}=\xx_{C'}}}(w_1)
\]
holds in $\R[\Theta_{\TT_{X_{C'}=\xx_{C'}}}]$. By Lemma~\ref{lem:rooted_subtrees} (ii) we have $t_\TT(v)=t_{\TT_{X_{C'}=\xx_{C'}}}(v)$ and hence the equality
\[
t_{\TT}(v_1)t_{\TT}(w_2)=t_{\TT}(v_2)t_{\TT}(w_1)
\]
holds in $\R[\Theta_{\TT}]$, i.e. the nodes $v,w$ are balanced.
\end{proof}

\begin{proposition}
\label{prop:weird_formula}
Let $\TT$ be a CStree. Let $A,B,C\subset [p]$ be pairwise disjoint with $A\cup B\cup C=[k-1]$ and fix $\xx_A\in\RR_A, \xx_B\in\RR_B,\xx_C\in\RR_C$. Then the following rule holds for the CSI statements in $\TT$:
\[
X_k\independent X_A|X_{B\cup C}=\xx_B\xx_C \text{ and } X_k\independent X_B| X_{A\cup C}=\xx_A\xx_C \Rightarrow X_k\independent X_{A\cup B}|X_C=\xx_C.
\]
\end{proposition}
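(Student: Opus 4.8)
The plan is to restate the three CSI statements, all of which have the ``$L_{k-1}$-saturated'' shape $X_k\independent X_{[k-1]\setminus E}\mid X_E=\xx_E$ for some $E\subset[k-1]$ (since $A\cup B\cup C=[k-1]$), as conditions on the staging of $\TT$, and then to exploit the subcube structure of stages provided by condition~$(3)$ of Definition~\ref{def:cstree}. For $E\subset[k-1]$ and $\xx_E\in\RR_E$ write $F(\xx_E)\subset L_{k-1}$ for the set of vertices of $\TT$ in level $k-1$ whose restriction to $E$ equals $\xx_E$. The first step is the equivalence
\[
X_k\independent X_{[k-1]\setminus E}\mid X_E=\xx_E \text{ holds for }\TT \quad\Longleftrightarrow\quad F(\xx_E)\text{ is contained in a single stage of }\TT .
\]
For the right-to-left direction: if $F(\xx_E)$ lies in a stage $S$, then since $F(\xx_E)$ realizes every outcome on $[k-1]\setminus E$, the stage-defining context $D$ of $S$ must satisfy $D\subset E$; Lemma~\ref{independence-statements-lemma} gives $X_k\independent X_{[k-1]\setminus D}\mid X_D=\xx_D$ with $\xx_D=(\xx_E)_D$, and applying the weak union and then the specialization axiom to move $E\setminus D$ first into the conditioning set and then into the context yields the claimed statement. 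For the left-to-right direction: the conditional $f(x_k\mid u)$ at a vertex $u\in L_{k-1}$ is a single coordinate of the parameter point defining $f$, namely the one carrying the label $\theta(u\to ux_k)$, and coordinates carrying labels of distinct stages vary independently over $\Theta_{\TT}$; hence an equality $f(x_k\mid u)=f(x_k\mid u')$ forced for all $f\in\MM(\TT)$ and all $u,u'\in F(\xx_E)$ forces these vertices into one stage.

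Now apply the equivalence to the two hypotheses. The set $F(\xx_B\xx_C)=\{\yy_A\xx_B\xx_C:\yy_A\in\RR_A\}$ lies in a single stage $S_1$, and $F(\xx_A\xx_C)=\{\xx_A\yy_B\xx_C:\yy_B\in\RR_B\}$ lies in a single stage $S_2$. The vertex $\xx_A\xx_B\xx_C$ belongs to both sets, so $\xx_A\xx_B\xx_C\in S_1\cap S_2$, and since the stages partition $V_\TT$ this forces $S_1=S_2=:S$. By condition~$(3)$ of Definition~\ref{def:cstree} we may write $S=\{\zz\in L_{k-1}\colon \zz_D=\mathbf{u}\}$ for some $D\subset[k-1]$ and $\mathbf{u}\in\RR_D$. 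Since $F(\xx_B\xx_C)\subset S$ realizes every outcome of $\RR_A$ on the coordinates in $A$ while every element of $S$ is pinned on the coordinates in $D$, we get $D\cap A=\emptyset$; the same argument with $F(\xx_A\xx_C)$ gives $D\cap B=\emptyset$, hence $D\subset C$. Evaluating the defining equation of $S$ at the element $\xx_A\xx_B\xx_C\in S$ gives $\mathbf{u}=(\xx_C)_D$.

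To finish, take any vertex $\yy_A\yy_B\xx_C\in F(\xx_C)$; its restriction to $D\subset C$ equals $(\xx_C)_D=\mathbf{u}$, so $\yy_A\yy_B\xx_C\in S$. Thus $F(\xx_C)\subset S$ lies in a single stage, and by the right-to-left direction of the equivalence this is exactly $X_k\independent X_{[k-1]\setminus C}\mid X_C=\xx_C$, i.e. $X_k\independent X_{A\cup B}\mid X_C=\xx_C$. The one point that needs care is the equivalence of the first paragraph, in particular that a saturated CSI statement holding throughout $\MM(\TT)$ forces the corresponding vertices into a common stage; granting that, the rest is the combinatorial fact that a stage of $\TT$ (an axis-aligned subcube of $L_{k-1}$, by condition~$(3)$) which contains all of $F(\xx_B\xx_C)$ and all of $F(\xx_A\xx_C)$ must contain all of $F(\xx_C)$, which is precisely what condition~$(3)$ makes available.
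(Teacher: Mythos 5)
Your proof is correct and follows essentially the same route as the paper's: both arguments observe that the two slices $\{\yy_A\xx_B\xx_C\}$ and $\{\xx_A\yy_B\xx_C\}$ share the vertex $\xx_A\xx_B\xx_C$, hence lie in one stage $S$, and then use condition~(3) of Definition~\ref{def:cstree} to force the stage-defining context of $S$ into $C$, so that the whole slice $\{\zz_A\zz_B\xx_C\}$ lies in $S$. The only difference is that you explicitly verify the equivalence between a level-$(k-1)$ saturated CSI statement holding on $\MM(\TT)$ and the corresponding vertices lying in a common stage (via the independence of parameters attached to distinct stages, and Lemma~\ref{independence-statements-lemma} with weak union and specialization), a step the paper's proof takes for granted when it calls the two slices ``stages.''
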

\begin{proof}
In level $k-1$ we have the two stages
\[
S_1=\bigcup_{\yy_A\in \RR_A} \{\yy_A\xx_B\xx_C\},\quad 
S_2=\bigcup_{\yy_B\in \RR_B} \{\xx_A\yy_B\xx_C\}.
\]
However, these are both contained in a single stage: 
Both contain the element $\xx_A\xx_B\xx_C$. But different stages cannot intersect, hence the two are contained in a single stage $S$.

Let $\yy_A\neq \xx_A$ and $\yy_B\neq \xx_B$. The elements $\xx_A\yy_B\xx_C$ and $\yy_A\xx_B\xx_C$ are contained in $S$ and therefore $\zz_A\zz_B\xx_C\in S$ for every $\zz_A\in \RR_A$, $\zz_B\in \RR_B$. But this means $X_k\independent X_{A\cup B}|X_C=\xx_C$.
\end{proof}

    In terms of context DAGs the last lemma says the following: If in a context DAG $G_{\cc,\emptyset}$ there is an edge $i\to j$, i.e. $X_i\not\independent X_j|\cc$, but $X_i\independent X_j|\cc,\Cp$, then for every $v\in C'$ there is an edge $v\to j$.
    The lemma can also be understood as a stronger but slightly different version in CStrees of the intersection axiom 
    \[
    X_A\independent X_B|X_{S\cup D}, \cc,\, X_A\independent X_D|X_{S\cup B}, \cc \Rightarrow X_A\independent X_{B\cup D}|X_S, \cc
    \]
    as it only requires the first two CSI statements to each hold in one context $X_D=\xx_D$ and $X_B=\xx_B$.

\begin{example}
    Consider a CStree $\TT$ with empty minimal context DAG $G_\emptyset=([4],\{1\to 2, 2\to 3, 2\to 4\})$.
Lemma~\ref{prop:weird_formula} implies that this CStree is in fact the staged tree representation of the DAG $G_\emptyset$. Indeed, there is no vertex with at least two incoming edges which implies that any CSI statement in $\TT$ is already a specialization of a CI statement.
\end{example}

Using these observations, one can see that the CStrees given in Example~\ref{ex:p=3} are in fact all CStrees on $p=3$ variables.

\begin{proof}[Proof of Proposition \ref{prop:cstrees_three_variables}] 
    Let $G$ be the empty context DAG of the CStree $\TT$. If there is no vertex with two incoming edges, the CStree is the staged tree representation of a DAG by Proposition~\ref{prop:weird_formula}. Thus either the empty context DAG ist $1\to 3\leftarrow 2$ or the complete graph on three vertices. In either case by the same observation, the only other context DAGs are DAGs on the two vertices $1,3$ or $2,3$. In order to imply a CSI statement they cannot contain the edge. 

    Assume there is a CStree with edges $1\to 3$, $2\to 3$ in the empty context DAG and at least two more minimal contexts $X_1=x_1^i$ and $X_2=x_2^j$ for some outcomes of $X_1$ and $X_2$. We claim that this is impossible. Indeed, we have the following CSI statements in $\TT$:
    \[
    X_3\independent X_2|X_1=x_1^i\quad \text{and}\quad X_3\independent X_1|X_2=x_2^j.
    \]
    Using Proposition~\ref{prop:weird_formula} again, we see that the CSI statement $X_3\independent X_{1,2}$ holds, i.e. the empty context DAG does not have the edges $1\to 3$, $2\to 3$, a contradiction.
Thus either all minimal contexts fix outcomes of $X_1$ or all fix outcomes of $X_2$.
\end{proof}

To generalize the other implication of Theorem~\ref{thm:perfect-iff-balanced-iff-decomposable} we use an algebraic approach presented in the next section.

\section{Algebraic characterization of decomposable context-specific models}\label{sec:algebra}

The core of this paper is Theorem~\ref{thm:algebra-saturated-statements}
as it provides a complete characterization of the CSI statements that hold in a decomposable CSmodel. From
this theorem we deduce the main properties of decomposable CSmodels stated in the introduction. In particular, it lays down the technical foundation upon which we build our main algebraic result, Theorem~\ref{thm:balanced_defined_by_perfect}, which 
shows that every decomposable CSmodel can be defined by a
collection of perfect DAGs.
Theorem~\ref{thm:algebra-saturated-statements}  states that for a balanced CStree $\TT$, 
the polynomials associated to saturated CSI statements are
a generating set of the prime ideal $\ker(\psi_{\TT})$ that defines $\MM(\TT)$ implicitly. This is precisely the case for perfect DAG models, see \cite[Theorem 4.4]{GMS06}, which once again highlights the important role that decomposable CSmodels play in generalizing the algebraic properties of single DAGs to collections of DAGs in the context-specific setting. The proof of this result uses the algebraic notion of the toric fiber product, first introduced in \cite[]{S07}.

For any collection $\CC$ of CSI statements in a CStree $\TT$, we define the ideal $I_\CC$ to be the ideal generated by the polynomials associated to all CSI statements in $\CC$ as defined in Section~\ref{subsec:csis}.

\subsection{Setup}
Let $\TT$ be a balanced CStree, $\TTb$ the subtree of $\TT$ up to level $p-1$ and $S_1,\ldots, S_r$ the stages in $\TT$ in level $p-1$.
Let $\TT_p=\bigcup_{i\in[r]}\mathcal{B}_i$, where each $\mathcal{B}_i$ is a one-level tree together with its edge labels as in \cite[Section 3]{AD19}.
Consider the rings
\begin{align*}
\begin{array}{lll}
\R[\TTb]& :=&\R[p_{\xx}^{i}\colon i\in [r], \xx\in S_i], \\ 
    \R[\TT_p]& :=&\R[p_{k}^{i}\colon i\in [r], k\in [d_p]],\\
\R[\TT]& :=&\R[p_{\xx k}^{i} \colon i\in [r], \xx \in S_i,k\in[d_p]] 
\end{array}
\end{align*}
with multigrading $\deg(p_{\xx}^{i})=\deg(p_{k}^{i})=\deg(p_{\xx k}^{i}), i\in[r], \xx \in S_i, k\in [d_p]$ 
where
$\mathcal{A}=\{e_1,\ldots, e_r\}$ and $e_i$ is the $i$-th standard unit vector in $\mathbb{Z}^{r}$. Note that the rings $\R[\TT]$ and
$\R[D]$ are the same, except the former is multigraded. Consider the ring homomorphism 
\[
\R[\TT]\to \R[\TTb]\otimes \R[\TT_p],\quad p_{\xx k}^i\mapsto p_{\xx}^i\otimes p_k^i\quad (i\in [r], \xx\in S_i, k\in [d_p]).
\]
Following \cite[]{S07}, we call the kernel of this map $\Quad$. It is given by
\[
\Quad=\langle p_{\xx k_1}^{i}p_{\yy k_2}^{i} - p_{\xx k_2}^{i}p_{\yy k_1}^i : k_1\neq k_2 \in [d_p], \xx, \yy \in S_i, i \in [r] \rangle.
\]
Note that the generators of $\Quad$ are the $2\times 2$ minors of the matrices $(p_{\xx k}^{i})_{\xx\in S_i,k\in [d_p]}$ for all $i\in [r]$.
Now, consider the ring homomorphism
$$\R[\TT]\to \R[\TTb]/\ker(\psi_{\TTb})\otimes \R[\TT_p],\quad p_{\xx k}^i\mapsto p_{\xx}^i\otimes p_k^i\quad (i\in [r], \xx\in S_i, k\in [d_p]),$$
where $\psi_\TT$ is the homomorphism defined in (\ref{eq:psi-t}). The kernel of this map is the \textit{toric fiber product} of $\ker(\psi_{\TTb})$ and the zero ideal $\langle 0 \rangle\subseteq \R[\TT_p]$, and is denoted by $\ker(\psi_{\TTb})\times_\mathcal{A}\langle 0\rangle$. By \cite[Proposition 3.5]{AD19}, this toric fiber product is equal to $\ker(\psi_\TT)$ when $\TT$ is balanced. The generators of $\ker(\psi_{\TT})$ are obtained from two sets, namely $\ker(\psi_\TT)=\langle\Quad,\Lift(F)\rangle$, where $F$ is a set of generators of $\ker(\psi_{\TTb})$ and
$$\Lift(F):=\{p^i_{\xx_1k_1}p^j_{\yy_1k_2}-p^i_{\xx_2k_1}p^j_{\yy_2k_2}:\xx_1,\xx_2\in S_i, \yy_1,\yy_2\in S_j, k_1,k_2\in [d_p], p^i_{\xx_1}p^j_{\yy_1}-p^i_{\xx_2}p^j_{\yy_2}\in F\}.$$
Note that the construction above relies heavily on the fact that $\TT$ is balanced since this is the only case in which 
$\ker(\psi_{\TT})$ and $\ker(\psi_{\TTb})$ are toric and  $\mathcal{A}$-homogeneous.
\subsection{Main results}
In what follows saturated CSI statements will be the main actors. Let $\TT$ be a CStree with $p$ levels and let $\mathcal{M}(\TT)$ be the associated model. A \textit{saturated CSI statement} is a CSI statement of the form $X_A\independent X_B|X_S, \cc$, where $A\cup B\cup C\cup S=[p]$.
If $\CC$ is a collection of saturated CSI statements then the ideal $I_{\CC}$ is generated by binomials. Any ideal that is generated by binomials and in addition is prime is a \textit{toric} ideal.

\begin{definition}
Let $\CC$ be any collection of CSI statements of random variables $X_1,\dots, X_p$. We define $\sat(\CC)$ to be the set of all saturated CSI statements in $\CC$. 
For a CStree $\TT$ let $\sat(\TT):=\sat(\mathcal{J}(\TT))$ where $\mathcal{J}(\TT)$ denotes the set of all CSI statements that hold in $\TT$.
For a DAG $G$ we define $\sat(G):=\sat(\TT_G)$. Since $\mathcal{J}(\TT_G)=\glo(G)$, we also get $\sat(G)=\sat(\glo(G))$. 
\end{definition}

The proofs of the results in this section rely heavily on the toric fiber product construction. We motivate these results with the following concrete example.

\begin{figure}[t]

\begin{center}
\begin{tikzpicture}[thick,scale=0.3]
\node[circle, draw, fill=black!0, inner sep=2pt, minimum width=1pt] () at (4.4,0.2)  {};
\node[circle, draw, fill=black!0, inner sep=2pt, minimum width=1pt] () at (4.4,-0.2)  {};
\node[circle, draw, fill=black!0, inner sep=2pt, minimum width=1pt] () at (4.4,-0.8)  {};
\node[circle, draw, fill=black!0, inner sep=2pt, minimum width=1pt] () at (4.4,-1.2)  {};
\node[circle, draw, fill=black!0, inner sep=2pt, minimum width=1pt] () at (4.4,-1.8)  {};
\node[circle, draw, fill=black!0, inner sep=2pt, minimum width=1pt] () at (4.4,-2.2)  {};
\node[circle, draw, fill=black!0, inner sep=2pt, minimum width=1pt] () at (4.4,-2.8)  {};
\node[circle, draw, fill=black!0, inner sep=2pt, minimum width=1pt] () at (4.4,-3.2)  {};
\node[circle, draw, fill=black!0, inner sep=2pt, minimum width=1pt] () at (4.4,-3.8)  {};
\node[circle, draw, fill=black!0, inner sep=2pt, minimum width=1pt] () at (4.4,-4.2)  {};
\node[circle, draw, fill=black!0, inner sep=2pt, minimum width=1pt] () at (4.4,-3.8)  {};
\node[circle, draw, fill=black!0, inner sep=2pt, minimum width=1pt] () at (4.4,-4.2)  {};
\node[circle, draw, fill=black!0, inner sep=2pt, minimum width=1pt] () at (4.4,-4.8)  {};
\node[circle, draw, fill=black!0, inner sep=2pt, minimum width=1pt] () at (4.4,-5.2)  {};
\node[circle, draw, fill=black!0, inner sep=2pt, minimum width=1pt] () at (4.4,-5.8)  {};
\node[circle, draw, fill=black!0, inner sep=2pt, minimum width=1pt] () at (4.4,-6.2)  {};
\node[circle, draw, fill=black!0, inner sep=2pt, minimum width=1pt] () at (4.4,-6.8)  {};
\node[circle, draw, fill=black!0, inner sep=2pt, minimum width=1pt] () at (4.4,-7.2)  {};
\node[circle, draw, fill=black!0, inner sep=2pt, minimum width=1pt] () at (4.4,-7.8)  {};
\node[circle, draw, fill=black!0, inner sep=2pt, minimum width=1pt] () at (4.4,-8.2)  {};
\node[circle, draw, fill=black!0, inner sep=2pt, minimum width=1pt] () at (4.4,-8.8)  {};
\node[circle, draw, fill=black!0, inner sep=2pt, minimum width=1pt] () at (4.4,-9.2)  {};
\node[circle, draw, fill=black!0, inner sep=2pt, minimum width=1pt] () at (4.4,-9.8)  {};
\node[circle, draw, fill=black!0, inner sep=2pt, minimum width=1pt] () at (4.4,-10.2)  {};
\node[circle, draw, fill=black!0, inner sep=2pt, minimum width=1pt] () at (4.4,-10.8)  {};
\node[circle, draw, fill=black!0, inner sep=2pt, minimum width=1pt] () at (4.4,-11.2)  {};
\node[circle, draw, fill=black!0, inner sep=2pt, minimum width=1pt] () at (4.4,-11.8)  {};
\node[circle, draw, fill=black!0, inner sep=2pt, minimum width=1pt] () at (4.4,-12.2)  {};
\node[circle, draw, fill=black!0, inner sep=2pt, minimum width=1pt] () at (4.4,-12.8)  {};
\node[circle, draw, fill=black!0, inner sep=2pt, minimum width=1pt] () at (4.4,-13.2)  {};
\node[circle, draw, fill=black!0, inner sep=2pt, minimum width=1pt] () at (4.4,-13.8)  {};
\node[circle, draw, fill=black!0, inner sep=2pt, minimum width=1pt] () at (4.4,-14.2)  {};
\node[circle, draw, fill=black!0, inner sep=2pt, minimum width=1pt] () at (4.4,-14.8)  {};
\node[circle, draw, fill=black!0, inner sep=2pt, minimum width=1pt] () at (4.4,-15.2)  {};

 	 \node[circle, draw, fill=black!10, inner sep=2pt, minimum width=1pt] (w3) at (0,0)  {};
 	 \node[circle, draw, fill=black!10, inner sep=2pt, minimum width=1pt] (w4) at (0,-1) {};
 	 \node[circle, draw, fill=purple!50, inner sep=2pt, minimum width=1pt] (w5) at (0,-2) {};
 	 \node[circle, draw, fill=purple!50, inner sep=2pt, minimum width=1pt] (w6) at (0,-3) {};
 	 \node[circle, draw, fill=red, inner sep=2pt, minimum width=1pt] (v3) at (0,-4)  {};
 	 \node[circle, draw, fill=blue!50, inner sep=2pt, minimum width=1pt] (v4) at (0,-5) {};
 	 \node[circle, draw, fill=red, inner sep=2pt, minimum width=1pt] (v5) at (0,-6) {};
 	 \node[circle, draw, fill=blue!50, inner sep=2pt, minimum width=1pt] (v6) at (0,-7) {};
 	 \node[circle, draw, fill=black!50, inner sep=2pt, minimum width=2pt] (w3i) at (0,-8)  {};
 	 \node[circle, draw, fill=black!50, inner sep=2pt, minimum width=2pt] (w4i) at (0,-9) {};
 	 \node[circle, draw, fill=orange!50, inner sep=2pt, minimum width=2pt] (w5i) at (0,-10) {};
 	 \node[circle, draw, fill=orange!50, inner sep=2pt, minimum width=2pt] (w6i) at (0,-11) {};
 	 \node[circle, draw, fill=olive!50, inner sep=2pt, minimum width=2pt] (v3i) at (0,-12)  {};
 	 \node[circle, draw, fill=olive!50, inner sep=2pt, minimum width=2pt] (v4i) at (0,-13) {};
 	 \node[circle, draw, fill=orange, inner sep=2pt, minimum width=2pt] (v5i) at (0,-14) {};
 	 \node[circle, draw, fill=orange, inner sep=2pt, minimum width=2pt] (v6i) at (0,-15) {};

	 \node[circle, draw, fill=green!60!black!60, inner sep=2pt, minimum width=2pt] (w1) at (-4,-.5) {};
 	 \node[circle, draw, fill=green!60!black!60, inner sep=2pt, minimum width=2pt] (w2) at (-4,-2.5) {}; 
 	 \node[circle, draw, fill=red!60!black!60, inner sep=2pt, minimum width=2pt] (v1) at (-4,-4.5) {};
 	 \node[circle, draw, fill=red!60!black!60, inner sep=2pt, minimum width=2pt] (v2) at (-4,-6.5) {};	 
	 \node[circle, draw, fill=yellow, inner sep=2pt, minimum width=2pt] (w1i) at (-4,-8.5) {};
 	 \node[circle, draw, fill=yellow, inner sep=2pt, minimum width=2pt] (w2i) at (-4,-10.5) {};
 	 \node[circle, draw, fill=cyan, inner sep=2pt, minimum width=2pt] (v1i) at (-4,-12.5) {};
 	 \node[circle, draw, fill=cyan, inner sep=2pt, minimum width=2pt] (v2i) at (-4,-14.5) {};

 	 \node[circle, draw, fill=pink, inner sep=2pt, minimum width=2pt] (w) at (-8,-1.5) {};
 	 \node[circle, draw, fill=blue!80, inner sep=2pt, minimum width=2pt] (v) at (-8,-5.5) {};	
 	 \node[circle, draw, fill=blue!20, inner sep=2pt, minimum width=2pt] (wi) at (-8,-9.5) {};
 	 \node[circle, draw, fill=olive, inner sep=2pt, minimum width=2pt] (vi) at (-8,-13.5) {};
	 
     \node[circle, draw, fill=red!60, inner sep=2pt, minimum width=2pt] (r) at (-12,-3.5) {};
     \node[] () at (-12,-2.5) {\tiny{$0$}};
 	 \node[circle, draw, fill=green!60, inner sep=2pt, minimum width=2pt] (ri) at (-12,-11.5) {};
     \node[] () at (-12,-10.5) {\tiny{$1$}};

 	 \node[circle, draw, fill=black!0, inner sep=2pt, minimum width=2pt] (I) at (-16,-7.5) {};

   \node[] () at (-14,1.4) {$X_1$};
   \node[] () at (-10,1.4) {$X_2$};
   \node[] () at (-6,1.4) {$X_3$};
   \node[] () at (-2,1.4) {$X_4$};
   \node[] () at (2.2,1.4) {$X_5$};

	%---EDGES---	 
 	 \draw[->]   (I) --    (r);
 	 \draw[->]   (I) --   (ri) ;

 	 \draw[->]   (r) --   (w) ;
 	 \draw[->]   (r) --   (v) ;

 	 \draw[->]   (w) --  (w1) ;
 	 \draw[->]   (w) --  (w2) ;

 	 \draw[->]   (w1) --   (w3) ;
 	 \draw[->]   (w1) --   (w4) ;
 	 \draw[->]   (w2) --  (w5) ;
 	 \draw[->]   (w2) --  (w6) ;

 	 \draw[->]   (v) --  (v1) ;
 	 \draw[->]   (v) --  (v2) ;

 	 \draw[->]   (v1) --  (v3) ;
 	 \draw[->]   (v1) --  (v4) ;
 	 \draw[->]   (v2) --  (v5) ;
 	 \draw[->]   (v2) --  (v6) ;

 	 \draw[->]   (ri) --   (wi) ;
 	 \draw[->]   (ri) -- (vi) ;

 	 \draw[->]   (wi) --  (w1i) ;
 	 \draw[->]   (wi) --  (w2i) ;

 	 \draw[->]   (w1i) --  (w3i) ;
 	 \draw[->]   (w1i) -- (w4i) ;
 	 \draw[->]   (w2i) --  (w5i) ;
 	 \draw[->]   (w2i) --  (w6i) ;

 	 \draw[->]   (vi) --  (v1i) ;
 	 \draw[->]   (vi) --  (v2i) ;

 	 \draw[->]   (v1i) --  (v3i) ;
 	 \draw[->]   (v1i) -- (v4i) ;
 	 \draw[->]   (v2i) -- (v5i) ;
 	 \draw[->]   (v2i) --  (v6i) ;
 	 
 	 \draw[->]   (w3) --   (4,0.2) ;
 	 \draw[->]   (w3) --   (4,-0.2) ;
 	 \draw[->]   (w4) --   (4,-0.8) ;
 	 \draw[->]   (w4) --   (4,-1.2) ;
 	 \draw[->]   (w5) --   (4,-1.8) ;
 	 \draw[->]   (w5) --   (4,-2.2) ;
 	 \draw[->]   (w6) --   (4,-2.8) ;
 	 \draw[->]   (w6) --   (4,-3.2) ;
 	 \draw[->]   (v3) --   (4,-3.8) ;
 	 \draw[->]   (v3) --   (4,-4.2) ;
 	 \draw[->]   (v4) --   (4,-4.8) ;
 	 \draw[->]   (v4) --   (4,-5.2) ;
 	 \draw[->]   (v5) --   (4,-5.8) ;
 	 \draw[->]   (v5) --   (4,-6.2) ;
 	 \draw[->]   (v6) --   (4,-6.8) ;
 	 \draw[->]   (v6) --   (4,-7.2) ;
 	 \draw[->]   (w3i) --   (4,-7.8) ;
 	 \draw[->]   (w3i) --   (4,-8.2) ;
 	 \draw[->]   (w4i) --   (4,-8.8) ;
 	 \draw[->]   (w4i) --   (4,-9.2) ;
 	 \draw[->]   (w5i) --   (4,-9.8) ;
 	 \draw[->]   (w5i) --   (4,-10.2) ;
 	 \draw[->]   (w6i) --   (4,-10.8) ;
 	 \draw[->]   (w6i) --   (4,-11.2) ;
 	 \draw[->]   (v3i) --   (4,-11.8) ;
 	 \draw[->]   (v3i) --   (4,-12.2) ;
 	 \draw[->]   (v4i) --   (4,-12.8) ;
 	 \draw[->]   (v4i) --   (4,-13.2) ;
 	 \draw[->]   (v5i) --   (4,-13.8) ;
 	 \draw[->]   (v5i) --   (4,-14.2) ;
 	  \draw[->]   (v6i) --   (4,-14.8) ;
 	 \draw[->]   (v6i) --   (4,-15.2) ;

\end{tikzpicture}
\hspace{1cm}
\begin{tikzpicture}[thick,scale=0.29]
	\draw (0,0) -- (18,0) -- (18,-18) -- (0,-18) -- cycle;
    \draw (9,0) -- (9,-18);
    \draw (0,-9) -- (18,-9);

	%---NODES---
	%empty context	 
	 \node[circle, draw, fill=black!0, inner sep=1pt, minimum width=1pt] (1v1) at (1.5,-2.5) {$1$};
 	 \node[circle, draw, fill=black!0, inner sep=1pt, minimum width=1pt] (1v2) at (7.5,-2.5) {$2$};
 	 \node[circle, draw, fill=black!0, inner sep=1pt, minimum width=1pt] (1v3) at (1.5,-6) {$3$};
 	 \node[circle, draw, fill=black!0, inner sep=1pt, minimum width=1pt] (1v4) at (7.5,-6) {$4$}; 	 
 	 \node[circle, draw, fill=black!0, inner sep=1pt, minimum width=1pt] (1v5) at (4.5,-8) {$5$};

	%X1=1
 	 \node[circle, draw, fill=black!0, inner sep=1pt, minimum width=1pt] (2v2) at (16.5,-2.5) {$2$};
 	 \node[circle, draw, fill=black!0, inner sep=1pt, minimum width=1pt] (2v3) at (10.5,-6) {$3$};
 	 \node[circle, draw, fill=black!0, inner sep=1pt, minimum width=1pt] (2v4) at (16.5,-6) {$4$}; 	 
 	 \node[circle, draw, fill=black!0, inner sep=1pt, minimum width=1pt] (2v5) at (13.5,-8) {$5$};

	%X2=0
	 \node[circle, draw, fill=black!0, inner sep=1pt, minimum width=1pt] (3v1) at (1.5,-11.5) {$1$};
 	 \node[circle, draw, fill=black!0, inner sep=1pt, minimum width=1pt] (3v3) at (1.5,-15) {$3$};
 	 \node[circle, draw, fill=black!0, inner sep=1pt, minimum width=1pt] (3v4) at (7.5,-15) {$4$}; 	 
 	 \node[circle, draw, fill=black!0, inner sep=1pt, minimum width=1pt] (3v5) at (4.5,-17) {$5$};
 	 
 	 %X1X2=01
 	 \node[circle, draw, fill=black!0, inner sep=1pt, minimum width=1pt] (4v3) at (10.5,-15) {$3$};
 	 \node[circle, draw, fill=black!0, inner sep=1pt, minimum width=1pt] (4v4) at (16.5,-15) {$4$}; 	 
 	 \node[circle, draw, fill=black!0, inner sep=1pt, minimum width=1pt] (4v5) at (13.5,-17) {$5$};

	%---EDGES---	 
 	 \draw[->]   (1v1) -- (1v2) ;
 	 \draw[->]   (1v1) -- (1v3) ;
 	 \draw[->]   (1v1) -- (1v4) ;
 	 \draw[->]   (1v1) -- (1v5) ;
 	 \draw[->]   (1v2) -- (1v3) ;
 	 \draw[->]   (1v2) -- (1v4) ;
 	 \draw[->]   (1v2) -- (1v5) ;
 	 \draw[->]   (1v3) -- (1v5) ;
 	 \draw[->]   (1v4) -- (1v5) ;

 	 \draw[->]   (2v2) -- (2v3) ;
 	 \draw[->]   (2v2) -- (2v4) ;
 	 \draw[->]   (2v2) -- (2v5) ;
 	 \draw[->]   (2v3) -- (2v5) ;
 	 
 	 \draw[->]   (3v1) -- (3v3) ;
 	 \draw[->]   (3v1) -- (3v4) ;
 	 \draw[->]   (3v1) -- (3v5) ;
 	 \draw[->]   (3v3) -- (3v5) ;

 	 \draw[->]   (4v3) -- (4v5) ;

	 %---LABELS---
	 \node at (1.15,-1) {$G_{\emptyset}$} ;
	 \node at (11.2,-1) {$G_{X_1=1}$} ;
	 \node at (2.2,-10) {$G_{X_2 = 0}$} ;
	 \node at (12.2,-10) {$G_{X_1X_2 = 01}$} ;
\end{tikzpicture}
\caption{\small{Balanced CStree on five binary random variables whose empty context DAG is not perfect.} 
} 
\label{fig:motivatingEx}
\end{center}
\end{figure}
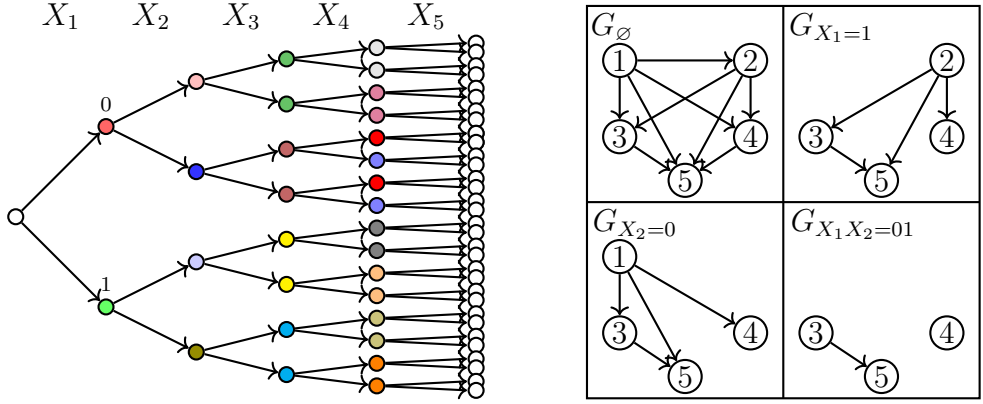

\begin{example}[Decomposable CSmodel with a non-perfect empty context] \label{ex:nonperfsparse}
Consider the decomposable CSmodel given by the CStree in Figure~\ref{fig:motivatingEx}
for $p=5$. It has four minimal contexts, namely, $$\CC_\TT=\{\varnothing,\; X_1=1,\; X_1X_2=01,\; X_2=0\}.$$ Only the non-empty minimal contexts are perfect, yet the tree is balanced. The CSI statements corresponding to the four minimal contexts are, respectively, 
\begin{align*}
    &X_3\independent X_4|X_1X_2, &X_4\independent X_5|X_2X_3,X_1=1,\\
    &X_4\independent X_5|X_3,X_1X_2=01, &X_4\independent X_5|X_1X_3,X_2=0.
\end{align*}

The last three statements, corresponding to the three perfect minimal contexts, are saturated. 
Applying the contraction axiom to each of these statements together with the appropriate specialization of the statement $X_3\independent X_4|X_1X_2$ (corresponding to the empty context), we get the following three saturated statements 
$$X_3X_5\independent X_4|X_2,X_1=1,\; X_3X_5\independent X_4|X_1X_2=01,\; X_3X_5\independent X_4|X_1,X_2=0.$$
These three saturated statements give rise to 24 polynomials, 8 of which coincide with stage-defining statements for level 5. These 8 polynomials, one of which is
$$p_{00000}^{1}p_{00011}^1-p_{00001}^1p_{00010}^1,$$
are precisely the polynomials in $\Quad$. The remaining 16 polynomials, one of which is
$$p_{00000}^1p_{00110}^2-p_{00010}^1p_{00100}^2,$$ are the polynomials in $\Lift(F)$, defined above. Hence, the 24 polynomials associated to the saturated statements are precisely the generators of $\ker(\psi_\TT)$. 
\end{example}

Turns out, the phenomenon in the example above can be generalized to all decomposable CSmodels. The next theorem is the technical foundation of this paper. It demonstrates the important role that saturated CSI statements play in the algebra of decomposable CSmodels and it also contains most of the technical work in its proof.

\begin{theorem}
\label{thm:algebra-saturated-statements}
If $\mathcal{M}(\TT)$ is a decomposable CSmodel, then $\ker(\psi_\TT)$ is generated by the quadratic binomials associated to all saturated CSI statements in $\mathcal{J}(\TT)$, i.e. 
\[
\ker(\psi_\TT)=I_{\sat(\TT)}.
\]
\end{theorem}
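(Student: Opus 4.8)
The plan is to argue by induction on the number $p$ of levels of $\TT$. Write $\TTb$ for the truncation of $\TT$ to its first $p-1$ levels; this is again a CStree, and it is again balanced (this is exactly what makes the setup above applicable, since it forces $\ker(\psi_\TTb)$ to be $\A$-homogeneous and toric, and it can also be checked directly by specializing the level-$p$ parameters of each stage via $\theta^{(i)}_1\mapsto 1$, $\theta^{(i)}_k\mapsto 0$ for $k\neq 1$, which carries each balancedness identity of $\TT$ to the corresponding identity of $\TTb$). Hence $\MM(\TTb)$ is a decomposable CSmodel, and by the induction hypothesis $\ker(\psi_\TTb)=I_{\sat(\TTb)}$; in particular $\ker(\psi_\TTb)$ is generated by the (two-term) binomials associated to the saturated CSI statements of $\TTb$. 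Taking this as the generating set $F$ in the setup, we obtain
\[
\ker(\psi_\TT)=\langle\Quad,\Lift(F)\rangle .
\]
The base case $p=1$ is immediate, since both ideals are then $\langle 0\rangle$.

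The inclusion $I_{\sat(\TT)}\subseteq\ker(\psi_\TT)$ needs no induction: every statement in $\sat(\TT)\subseteq\J(\TT)$ holds for all distributions in $\MM(\TT)$ (Lemma~\ref{independence-statements-lemma}, together with the fact that the CSI axioms are valid inference rules), so its associated polynomials vanish on $\MM(\TT)$ and therefore lie in the prime vanishing ideal $\ker(\psi_\TT)$.

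For the reverse inclusion it suffices, by the displayed equality, to show $\Quad\subseteq I_{\sat(\TT)}$ and $\Lift(F)\subseteq I_{\sat(\TT)}$. The generators of $\Quad$ are the $2\times 2$ minors of the matrices $(p^{i}_{\xx k})_{\xx\in S_i,\,k\in[d_p]}$, one block for each stage $S_i\subset L_{p-1}$; if $X_{C_i}=\xx_{C_i}$ is the stage-defining context of $S_i$, then by Lemma~\ref{independence-statements-lemma} the statement $X_p\independent X_{[p-1]\setminus C_i}\mid X_{C_i}=\xx_{C_i}$ lies in $\J(\TT)$, it is saturated because $\{p\}\cup([p-1]\setminus C_i)\cup C_i=[p]$, and — being saturated, so that the marginalizations in \eqref{eqn:associated-polynomial-to-CSI-statements} are trivial — its associated polynomials are precisely these minors. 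For $\Lift(F)$, a generator has the form $p^{i}_{\xx_1 k_1}p^{j}_{\yy_1 k_2}-p^{i}_{\xx_2 k_1}p^{j}_{\yy_2 k_2}$ with $k_1,k_2\in[d_p]$, obtained from a generator $p^{i}_{\xx_1}p^{j}_{\yy_1}-p^{i}_{\xx_2}p^{j}_{\yy_2}\in F$, which by the induction hypothesis is associated to a saturated statement $\sigma$ of $\TTb$, say $X_A\independent X_B\mid X_S,\,X_D=\xx_D$ with $A\sqcup B\sqcup S\sqcup D=[p-1]$. The membership conditions $\xx_1,\xx_2\in S_i$ and $\yy_1,\yy_2\in S_j$ force the stage-defining contexts of $S_i$ and $S_j$ to be contained in $A\cup S\cup D$ and to be compatible with this decomposition. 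Combining $\sigma$ with the stage-defining statements of $S_i$ and $S_j$ at level $p-1$ — first specializing the latter to the subcontext prescribed by $\sigma$, then applying the contraction axiom exactly as in Example~\ref{ex:nonperfsparse} — yields a saturated CSI statement $\tau$ of $\TT$; a direct inspection of the index sets shows that the above lift belongs to $I_\tau\subseteq I_{\sat(\TT)}$.

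The main obstacle is this last step. One must produce, for each lifted generator, the right saturated CSI statement $\tau$ of $\TT$ and then verify that the lifts are among its associated polynomials; the bookkeeping with the CSI axioms (contraction, specialization, and, to cover the cases in which several level-$(p-1)$ stages are merged into one, Proposition~\ref{prop:weird_formula}) is the delicate part. It is precisely here that balancedness enters, both to legitimize the toric fiber product decomposition $\ker(\psi_\TT)=\langle\Quad,\Lift(F)\rangle$ and, via the induction hypothesis applied to $\TTb$, to know that $F$ may be chosen to consist only of binomials attached to saturated statements.
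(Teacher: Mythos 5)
Your overall strategy is the same as the paper's: induct on the number of levels, use balancedness to write $\ker(\psi_\TT)=\ker(\psi_\TTb)\times_{\A}\langle 0\rangle=\langle\Quad,\Lift(F)\rangle$ with $F$ coming from the induction hypothesis, identify $\Quad$ with the polynomials of the stage-defining saturated statements $X_p\independent X_{[p-1]\setminus C_i}\mid X_{C_i}=\xx_{C_i}$, and then place each element of $\Lift(F)$ inside $I_\tau$ for a suitable saturated statement $\tau$ of $\TT$. The easy inclusion, the base case, the treatment of $\Quad$, and the observation that $\TTb$ is again balanced (your specialization argument for the latter is fine) all match the paper.

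However, the proof has a genuine gap exactly where you acknowledge "the main obstacle": the argument that each lifted generator lies in $I_{\sat(\TT)}$ is not actually carried out, and the partial claims you make about it are not correct as stated. In the paper this step proceeds by exploiting the $\A$-homogeneity of the binomial $g$ associated to $\sigma=X_A\independent X_B\mid X_D, X_C=\xx_C$: homogeneity forces the stage indices to pair up as $(k,\ell)=(m,n)$ or $(k,\ell)=(n,m)$, and in the first case one deduces $C_k\cap A=\emptyset$ (so the stage statement entails $X_p\independent X_A\mid X_B=\yy_B, X_{C\cup D}=\xx_C\xx_D$), while in the second the same deduction applies with $B$ in place of $A$. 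One then needs a case analysis over \emph{all} outcomes $\zz_B,\zz_B'$: if the first pattern holds for every pair, absorption over all of $\RR_B$ yields $X_p\independent X_A\mid X_B, X_{C\cup D}=\xx_C\xx_D$ and contraction with $\sigma$ gives the saturated $\tau=X_A\independent X_{B\cup\{p\}}\mid X_{C\cup D}=\xx_C\xx_D$; if the mixed pattern occurs for some pair, absorption is unavailable and one must instead combine the two single-context statements $X_p\independent X_A\mid X_B=\yy_B,\dots$ and $X_p\independent X_B\mid X_A=\yy_A,\dots$ via Proposition~\ref{prop:weird_formula}, then apply weak union and contraction. Your sketch replaces this by "specializing the stage-defining statements to the subcontext prescribed by $\sigma$, then applying contraction," which does not produce the conditioning on the full variable $X_B$ that contraction with $\sigma$ requires; and your assertion that the stage-defining contexts of $S_i,S_j$ are "contained in $A\cup S\cup D$" is the conclusion of only one of the two homogeneity cases (in the other case the context avoids $A$, not $B$), which is precisely why the dichotomy and Proposition~\ref{prop:weird_formula} cannot be avoided. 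Since this lifting argument is the technical core of the theorem, leaving it at the level of "a direct inspection of the index sets shows..." leaves the proof incomplete.
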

\begin{proof}
The containment 
$I_{\sat(\TT)}\subset \ker(\psi_{\TT})$ holds because all polynomials associated to statements in $\mathcal{J}(\TT)$ belong to 
$ \ker(\psi_{\TT})$. In particular, all binomials coming from saturated CSI statements are in 
$\ker(\psi_{\TT})$. 

For the other containment we proceed by induction on the number
of random variables in $\TT$. The statement is trivially true for
$p=1,2$. Suppose that $\TT$ has $p$ levels, and any balanced CStree with less than $p$ levels satisfies the statement. 
Let $\TTb$ be the subtree of $\TT$ up to level $p-1$. Then $\TTb$
is balanced, thus by induction hypothesis $\ker(\psi_{\TTb})$ is
generated by a set $F$ of binomials associated to saturated CSI statements in the variables $X_{[p-1]}$. Moreover, 
\[
\ker(\psi_{\TT})=\ker(\psi_{\TTb})\times_{\mathcal{A}}\langle 0\rangle =\langle\Quad, \Lift(F) \rangle.
\]
Hence, it suffices to prove 
that $\Quad$ and $\Lift(F)$ are polynomials associated to saturated 
CSI statements in the variables $X_{[p]}$. 
Let $S_1,\ldots, S_r$ be the stages of level $p-1$ in $\TT$.
For all $i\in [r]$, let $X_{C_i}=\xx_{C_i}$ be the stage defining context of the stage $S_i$. Recall that 
\[
\Quad=\langle p_{\xx k_1}^{i}p_{\yy k_2}^{i} - p_{\xx k_2}^{i}p_{\yy k_1}^i : k_1\neq k_2 \in [d_p], \xx, \yy \in S_i, i \in [r] \rangle.
\]
which, by (\ref{eqn:associated-polynomial-to-CSI-statements}), is precisely the set of binomials
associated to the saturated CSI statements $X_{p}\independent X_{[p-1]\setminus C_i}|X_{C_i}=\xx_{C_i}$ for all $i\in [r]$.

Since $\TT$ is balanced, $F$ is a set of $\mathcal{A}$-homogeneous binomials.
Let $g\in F$, then it is associated to a CSI statement $X_{A}\independent X_{B}|X_{D},X_{C}=\xx_C$ with $A\cup B\cup C \cup D =[p-1]$ and $\xx_C\in\RR_C$ as in (\ref{eqn:associated-polynomial-to-CSI-statements}). 

Choose $\yy_{A},\yy_A'\in \RR_{A}$ and $\yy_B,\yy_B'\in \RR_{B}$
such that for all $i\in A$,  $(\yy_{A})_i\neq (\yy_A')_i$,  and
for all $i\in B$, $(\yy_{B})_i\neq (\yy_B')_i$. Consider the polynomial
\[
h=p_{\yy_A \yy_B \xx_{C}\xx_{D}}^k p_{\yy_A' \yy_B' \xx_{C}\xx_{D}}^{\ell}-p_{\yy_A' \yy_B \xx_{C}\xx_{D}}^{m}p_{\yy_A \yy_B' \xx_{C}\xx_{D}}^{n}
\] associated to the same CSI statement as $g$ and its lift
\[
h_{z_1,z_2}=p_{\yy_A \yy_B \xx_{C}\xx_{D}z_1}^k p_{\yy_A' \yy_B' \xx_{C}\xx_{D}z_2}^{\ell}-
p_{\yy_A' \yy_B \xx_{C}\xx_{D}z_1}^{m}p_{\yy_A \yy_B' \xx_{C}\xx_{D}z_2}^{n},\;\; z_1,z_2\in [d_{p}].
\]
Since $h$ is $\mathcal{A}$-homogeneous,  either $(k,\ell)=(m,n)$ or $(k,\ell)=(n,m)$ . Assume it is the former. By the assignment of the grading, it follows that $(\yy_A \yy_B \xx_{C}\xx_{D}z_1)_{C_k}=(\yy_A' \yy_B \xx_{C}\xx_{D}z_1)_{C_k}$ because they are in the same stage, therefore $C_{k}\cap A =\emptyset$. The CSI statement associated to
the stage $S_k$ is $X_{p}\independent X_{[p-1]\setminus C_k}|X_{C_k}=\xx_{C_k}$, this entails $X_{p}\independent X_{A}|X_B=\yy_B, X_{C\cup D}=\xx_C\xx_D$ because  $C_{k}\cap A =\emptyset$.

For every $\zz_B,\zz_B'\in \RR_{B}$ there exist $\alpha,\beta\in [r]$ such that the binomial is either
\[
p_{\yy_A \zz_B \xx_{C}\xx_{D}z_1}^\alpha p_{\yy_A' \zz_B' \xx_{C}\xx_{D}z_2}^\beta-
p_{\yy_A' \zz_B \xx_{C}\xx_{D}z_1}^\alpha p_{\yy_A \zz_B' \xx_{C}\xx_{D}z_2}^\beta
\]
or
\[
p_{\yy_A \zz_B \xx_{C}\xx_{D}z_1}^\alpha p_{\yy_A' \zz_B' \xx_{C}\xx_{D}z_2}^\beta-
p_{\yy_A' \zz_B \xx_{C}\xx_{D}z_1}^\beta p_{\yy_A \zz_B' \xx_{C}\xx_{D}z_2}^\alpha
\]
depending on which variables have the same degree.

Case 1: For every $\zz_B$ and $\zz_B'$ entry-wise different, we have the first grading. Then by the same argument as for $\yy_B,\yy_B'$ we get the saturated CSI statement $X_{p}\independent X_{A}|X_B=\zz_B, X_{C\cup D}=\xx_C\xx_D$ for all $\zz_B$. Hence, by absorption we get 
\[
X_{p}\independent X_{A}|X_B, X_{C\cup D}=\xx_C\xx_D.
\]
Applying the contraction axiom to this statement and to $X_{A}\independent X_{B}|X_{D},X_{C}=\xx_C$ we get the saturated CSI statement
\[
X_A\independent X_{B\cup\{p\}}|X_{C\cup D}=\xx_C\xx_D.
\]
This statement entails all binomials in $\Lift(g)$, equivalently $\Lift(g)\subset I_{X_{A}\independent X_{B\cup \{p\}}|X_{C\cup D}=\xx_C\xx_D}$.

Case 2: There exists a pair $\zz_B,\zz_B'$, entry-wise different, such that the binomial has the second grading. 
Using the same argument as above with $B$ instead of $A$, this implies the statement $X_{p}\independent X_{B}|X_A=\yy_A, X_{C\cup D}=\xx_C\xx_D$. Combining this statement with $X_{p}\independent X_{A}|X_B=\yy_B, X_{C\cup D}=\xx_C\xx_D$ and Proposition~\ref{prop:weird_formula} we get
\[ 
X_p\independent X_{A\cup B}|X_{C\cup D}=\xx_C\xx_D.
\]
By the weak union axiom, we get $X_p\independent X_{A}|X_B,X_{C\cup D}=\xx_C\xx_D$. As in Case 1, we obtain the CSI statement
\[
X_A\independent X_{B\cup\{p\}}|X_{C\cup D}=\xx_C\xx_D
\]
and the conclusion $\Lift(g)\subset I_{X_{A}\independent X_{B\cup \{p\}}|X_{C\cup D}=\xx_C\xx_D}$ follows. The proof for the second choice of grading $(k,\ell)=(n,m)$ of $h$ is analogous, by swapping the roles of $A$ and $B$ in the above arguments.
\end{proof}
\begin{remark}
    The Theorem~\ref{thm:algebra-saturated-statements} implies
    that $\mathcal{V}(\ker(\psi_{\TT}))$ is the Zariski closure of $\MM(\TT)$.
    Therefore Theorem~\ref{thm:CShc} is also valid for distributions
    in the boundary of the probability simplex.
\end{remark}

For the rest of this section, we can relax the assumption of working with minimal contexts. Let $\TT$ be a CStree and $\CC$ be any collection of contexts with associated DAGs $G_{\cc}$, $\cc\in\CC$, such that $\mathcal{J}(\TT)=\cup_{\cc\in\CC}\glo(G_{\cc})$. That is, assume that the CSI statements that hold in $\CC$ are the same CSI statements that hold in $\TT$. The collection of minimal contexts is one such choice for $\CC$.

\begin{corollary}\label{cor:saturated-statements-for-minimal-dags}
    Let $\mathcal{M}(\TT)$ be a decomposable CSmodel. The ideal $\ker(\psi_{\TT})$ is generated by the binomials associated to all saturated CSI statements that hold in the context DAGs $G_{\cc}$, $\cc\in\CC$ , i.e. 
    $$\ker(\psi_\TT)=\sum_{X_C=\xx_C\in\CC}I_{\sat(G_{X_C=\xx_C})}.$$
\end{corollary}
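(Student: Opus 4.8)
The plan is to deduce this corollary from Theorem~\ref{thm:algebra-saturated-statements} by showing that the two sets of saturated statements, $\sat(\TT)$ and $\bigcup_{X_C=\xx_C\in\CC}\sat(G_{X_C=\xx_C})$, generate the same ideal. By Theorem~\ref{thm:algebra-saturated-statements} we already know $\ker(\psi_\TT)=I_{\sat(\TT)}$, so it suffices to prove
\[
I_{\sat(\TT)}=\sum_{X_C=\xx_C\in\CC}I_{\sat(G_{X_C=\xx_C})}.
\]
The inclusion $\supseteq$ is immediate: every CI statement that holds in $G_{\cc}$ belongs to $\glo(G_{\cc})\subset\mathcal{J}(\TT)$, and a saturated $d$-separation statement $X_A\independent X_B\mid X_S$ on the node set $[p]\setminus C$ (so $A\cup B\cup S=[p]\setminus C$) corresponds, under the context $\cc=(X_C=\xx_C)$, to the saturated CSI statement $X_A\independent X_B\mid X_S, X_C=\xx_C$ on all of $[p]$; hence its associated binomials lie in $I_{\sat(\TT)}$.

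The substantive direction is $\subseteq$: every saturated CSI statement $X_A\independent X_B\mid X_S, X_D=\xx_D$ in $\mathcal{J}(\TT)$, with $A\cup B\cup S\cup D=[p]$, has its associated binomials contained in the right-hand side. First I would use the hypothesis $\mathcal{J}(\TT)=\bigcup_{\cc\in\CC}\glo(G_{\cc})$ together with the absorption axiom (as in the minimal-context reduction of \cite[Lemma 3.2]{DS22}): the statement $X_A\independent X_B\mid X_S, X_D=\xx_D$ arises from some context $\cc=(X_C=\xx_C)\in\CC$ with $C\subseteq D$, via specialization of a CI statement $X_A\independent X_B\mid X_{S\cup(D\setminus C)}$ valid in $G_{\cc}$. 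Specialization does not change the associated binomials beyond restricting to fixed outcomes of $X_{D\setminus C}$, so each binomial of $X_A\independent X_B\mid X_S, X_D=\xx_D$ is already a binomial of the saturated $d$-separation statement $X_A\independent X_B\mid X_{S\cup(D\setminus C)}$ in $G_{\cc}$ (which is saturated on $[p]\setminus C$ since $A\cup B\cup S\cup(D\setminus C)=[p]\setminus C$), hence lies in $I_{\sat(G_{\cc})}$.

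The step I expect to be the main obstacle is making the correspondence between saturated CSI statements of $\TT$ and saturated $d$-separation statements of the $G_{\cc}$ fully rigorous at the level of generators rather than just logical implication — in particular, checking that a saturated CSI statement in $\mathcal{J}(\TT)$ that is a \emph{specialization} of a CI statement in some $G_\cc$ really does have every one of its binomials captured among the $2\times2$ minors associated to that $d$-separation statement, using the explicit form \eqref{eqn:associated-polynomial-to-CSI-statements} and the compatibility of the node orderings (the ordering on $[p]\setminus C$ is the restriction of the ordering on $[p]$). Once this bookkeeping is in place, combining the two inclusions with Theorem~\ref{thm:algebra-saturated-statements} finishes the proof; the collection of minimal contexts is one admissible choice of $\CC$, giving the stated formula.
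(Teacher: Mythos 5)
Your proposal is correct and follows essentially the same route as the paper: the paper's proof is a one-line appeal to the hypothesis $\mathcal{J}(\TT)=\bigcup_{X_C=\xx_C\in\CC}\mathcal{J}(X_C=\xx_C)$ together with Theorem~\ref{thm:algebra-saturated-statements}, and your argument simply spells out the bookkeeping (absorption/specialization matching saturated CSI statements of $\TT$ with saturated $d$-separation statements of the $G_{\cc}$, and the corresponding containments of binomial generators) that the paper leaves implicit.
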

\begin{proof}
    This follows from the fact that $\mathcal{J}(\TT)=\cup_{X_C=\xx_C\in\CC}\mathcal{J}({X_C=\xx_C})$ and Theorem~\ref{thm:algebra-saturated-statements}.
\end{proof}

\begin{corollary} \label{cor:globalsat}
Let $\mathcal{M}(\TT)$ be a decomposable CSmodel.
Then \[
\ker(\psi_{\TT})= \sum_{X_{C}=\xx_C\in \CC} I_{\glo({G_{X_C=\xx_C}})}.
\]
\end{corollary}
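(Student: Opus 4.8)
The plan is to realize $\ker(\psi_\TT)$ as squeezed between $\sum_{\cc\in\CC}I_{\sat(G_\cc)}$ and $\sum_{\cc\in\CC}I_{\glo(G_\cc)}$. Corollary~\ref{cor:saturated-statements-for-minimal-dags} already identifies the smaller sum with $\ker(\psi_\TT)$, and since every saturated $d$-separation statement of a context DAG is in particular a $d$-separation statement of that DAG, the only remaining work is the inclusion $\sum_{\cc\in\CC}I_{\glo(G_\cc)}\subset\ker(\psi_\TT)$, which turns out to be soft.

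First I would fix the bookkeeping. For a context $\cc=(X_C=\xx_C)\in\CC$ the DAG $G_\cc$ has vertex set $[p]\setminus C$; I identify each $d$-separation statement $X_A\independent X_B| X_S$ of $G_\cc$ (so $A,B,S\subset[p]\setminus C$) with the CSI statement $X_A\independent X_B| X_S,X_C=\xx_C$ in the variables $X_{[p]}$, whose associated polynomials~(\ref{eqn:associated-polynomial-to-CSI-statements}) live in $\R[D]$. With this convention $I_{\glo(G_\cc)},I_{\sat(G_\cc)}\subset\R[D]$, as in Corollary~\ref{cor:saturated-statements-for-minimal-dags}, and $\sat(G_\cc)\subset\glo(G_\cc)$ as sets of CSI statements, so the generators of $I_{\sat(G_\cc)}$ are among those of $I_{\glo(G_\cc)}$. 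Summing over $\cc\in\CC$ and invoking Corollary~\ref{cor:saturated-statements-for-minimal-dags} gives
\[
\ker(\psi_\TT)=\sum_{\cc\in\CC}I_{\sat(G_\cc)}\subset\sum_{\cc\in\CC}I_{\glo(G_\cc)}.
\]

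For the reverse inclusion I would work one context DAG at a time. Fix $\cc\in\CC$ and a statement $X_A\independent X_B| X_S\in\glo(G_\cc)$. By the defining property of $\CC$ (that the CSI statements holding in the contexts of $\CC$ are exactly those of $\J(\TT)$), the CSI statement $X_A\independent X_B| X_S,\cc$ belongs to $\J(\TT)$. Since the CSI axioms are sound for positive distributions, every $f\in\MM(\TT)$ satisfies every statement of $\J(\TT)$; in particular the $2\times 2$ minors in~(\ref{eqn:associated-polynomial-to-CSI-statements}) attached to $X_A\independent X_B| X_S,\cc$ vanish on all of $\MM(\TT)$. By the remark following Theorem~\ref{thm:algebra-saturated-statements}, $\V(\ker(\psi_\TT))$ is the Zariski closure of $\MM(\TT)$, so these minors vanish there as well; since $\ker(\psi_\TT)$ is prime, hence radical, Hilbert's Nullstellensatz places the minors in $\ker(\psi_\TT)$. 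Thus $I_{\glo(G_\cc)}\subset\ker(\psi_\TT)$ for every $\cc\in\CC$, and summing yields $\sum_{\cc\in\CC}I_{\glo(G_\cc)}\subset\ker(\psi_\TT)$. Combined with the display above, this gives the claimed equality.

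I expect no genuine obstacle here: everything nontrivial is imported through Corollary~\ref{cor:saturated-statements-for-minimal-dags} (ultimately Theorem~\ref{thm:algebra-saturated-statements} and the toric fiber product construction), which is where balancedness of $\TT$ is essential. By contrast the inclusion $\sum_{\cc\in\CC}I_{\glo(G_\cc)}\subset\ker(\psi_\TT)$ holds for an arbitrary CStree and any valid collection $\CC$, and the only points requiring care are the identification of $d$-separation statements of the context DAG $G_\cc$ on $[p]\setminus C$ with CSI statements of $\TT$ in $\R[D]$, and the appeal to soundness of the CSI axioms so that membership in $\J(\TT)$ genuinely constrains the distributions in $\MM(\TT)$.
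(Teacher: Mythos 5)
Your proof is correct and takes essentially the same route as the paper's: both sandwich $\ker(\psi_\TT)$ between $\sum_{\cc\in\CC} I_{\sat(G_{\cc})}$, which equals $\ker(\psi_\TT)$ by Corollary~\ref{cor:saturated-statements-for-minimal-dags}, and $\sum_{\cc\in\CC} I_{\glo(G_{\cc})}$, and then obtain the reverse inclusion from the vanishing of the global-statement polynomials on $\MM(\TT)$, the Zariski density of $\MM(\TT)$ in $\V(\ker(\psi_\TT))$, and primeness of $\ker(\psi_\TT)$. The only cosmetic difference is that the paper imports the vanishing on $\MM(\TT)$ from a cited result of Duarte and Solus, whereas you rederive it from soundness of the CSI axioms for positive distributions together with the defining property of $\CC$; the underlying content is identical.
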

\begin{proof}

We show the following chain of inclusions
$$\ker(\psi_{\TT})=\sum_{X_C=\xx_C\in\CC}I_{\sat(G_{X_C=\xx_C})}\subseteq\sum_{X_{C}=\xx_C\in \CC} I_{\glo({G_{X_C=\xx_C}})}\subseteq\ker(\psi_\TT),$$
which implies the theorem. 
The equality follows from Corollary~\ref{cor:saturated-statements-for-minimal-dags} and the middle inclusion follows from the containment ${\sat(G_{X_C=\xx_C})}\subseteq {\glo(G_{X_C=\xx_C})}$ for all $X_C=\xx_C\in\CC$. 
For the last inclusion, let $J:=\sum_{X_{C}=\xx_C\in \CC} I_{\glo({G_{X_C=\xx_C}})}$. From \cite[Theorem 3.3]{DS22}, we know the
equality 
$$
\mathcal{V}(J)\cap\Delta_{|\RR|-1}^{\circ}=\mathcal{V}(\ker(\psi_{\TT}))\cap \Delta_{|\RR|-1}^{\circ}=\MM
(\TT).
$$
Since $\ker\psi_\TT$ is a prime ideal, this implies that
\begin{align*}
J&\subseteq \mathcal{I}\left(\mathcal{V}\left(J\right)\cap \Delta_{|\RR|-1}^{\circ}\right)\cap\R[D]\\
&=\mathcal{I}(\mathcal{V}(\ker(\psi_{\TT}))\cap \Delta_{|\RR|-1}^{\circ})\cap\R[D]\\
&=\mathcal{I}(\mathcal{V}(\ker\psi_\TT))\cap\R[D]=\ker \psi_{\TT}
\end{align*}
where $\mathcal{I}(V )$ denotes the set of polynomials in $\C[D]$ that vanish on a set $V\subseteq \C^{|\RR|}$.

\end{proof}

\subsection{Directed moralization for decomposable CSmodels}
To create perfect DAGs from non-perfect ones we define a directed version of the moralization operation. We use this operation to show that decomposable CSmodels can be described by perfect DAGs. We start by recalling the definition of moralization and its connection to d-separation.

\begin{definition}\label{def:moralization}
Let $G=([p],E)$ be a DAG. The \textit{moralization} of $G$, denoted by $G^m$, is the undirected graph with the vertex set $[p]$ that has an undirected edge for every directed edge in $E$, and an undirected edge $(u,v)$ whenever $u\to w$, $v\to w$ are edges in $G$.
\end{definition}
\begin{proposition}[{\cite[Proposition 3.25]{L96}}]\label{prop:d-sep-moralization}
Let $G=([p],E)$ be a DAG and $A,B,C$ be disjoint subsets of $[p]$. Then $C$ $d$-separates $A$ and $B$ in $G$ if and only if $C$ separates $A$ and $B$ in the moralization $(G_{\mathrm{an}(A\cup B\cup C)})^m$, where $G_{\mathrm{an}(A\cup B\cup C)}$ is the induced subgraph on the ancestors of $A\cup B\cup C$.
\end{proposition}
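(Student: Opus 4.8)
The plan is to prove both implications by contraposition. Write $D := \mathrm{an}(A \cup B \cup C)$ and let $G' := G_D$ be the induced subgraph on $D$. The two things to establish are: from a $d$-connecting path between $A$ and $B$ in $G$ given $C$ one can build a path in $(G')^m$ joining $A$ and $B$ that avoids $C$; and conversely. A structural fact I would use throughout is that a collider-free stretch of an undirected path in a DAG has the shape $x_0 \leftarrow \cdots \leftarrow x_j \to \cdots \to x_m$ for a single peak $x_j$ (after a forward arrow, a later backward arrow would create a collider), so every vertex of such a stretch is an ancestor of one of its two endpoints.

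First I would treat the implication that a $d$-connecting path in $G$ yields a path in $(G')^m$ avoiding $C$. Start with a simple $d$-connecting path $\pi$ from $a \in A$ to $b \in B$ given $C$ and argue that every vertex of $\pi$ lies in $D$: each collider of $\pi$ has a descendant in $C$ (this is exactly what being active requires), hence lies in $\mathrm{an}(C)$; cutting $\pi$ at its colliders produces collider-free stretches whose endpoints lie in $\{a,b\} \cup \{\text{colliders of } \pi\} \subset A \cup B \cup \mathrm{an}(C)$, so by the peak fact all their vertices lie in $\mathrm{an}(A \cup B \cup C) = D$. Thus $\pi$ is a path in $G'$. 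Now bypass each collider $w$ of $\pi$: its two $\pi$-neighbours $u, v$ are parents of $w$ in $G'$, so $\{u,v\}$ is a moralization (marriage) edge of $(G')^m$, and replacing the stretch $u - w - v$ by the edge $u - v$ shortens the path. Since $\pi$ has no two consecutive colliders, the result is a simple path through $a$, $b$, and the non-colliders of $\pi$ only; none of these lies in $C$ ($a,b$ by disjointness of $A,B,C$, the non-colliders because $\pi$ is active). That path witnesses the failure of separation in $(G')^m$.

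For the converse I would take a shortest path $\rho$ in $(G')^m$ from $A$ to $B$ with no vertex in $C$, and replace each moralization edge $\{u,v\}$ of $\rho$ by a v-structure $u \to z \leftarrow v$ through a common child $z \in D$, producing a walk $\sigma$ in $G$ from $A$ to $B$ all of whose vertices lie in $D$. The only colliders of $\sigma$ are vertices of $D$, so each has a descendant in $A \cup B \cup C$. If every collider of $\sigma$ has a descendant in $C$ then $\sigma$ is already $d$-connecting given $C$ (its non-colliders avoid $C$, its colliders are activated), and pruning it to a simple active path finishes. Otherwise some collider $c$ has a descendant in $A \cup B$ but none in $C$; following a shortest directed path $c \to \cdots \to t$ to the first vertex $t$ it meets in $A \cup B \cup C$ gives $t \in A \cup B$ with interior off $C$, and splicing this directed path onto the end of $\sigma$ lying in $B$ (when $t \in A$) or in $A$ (when $t \in B$) turns $c$ into a non-collider, adds only vertices off $C$, and moves an endpoint to $t$. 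Iterating with a strictly decreasing complexity measure eliminates every bad collider and reduces to the good case.

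The hard part will be the collider bookkeeping in the converse: arranging the unfolding and the induction so the process terminates and never recreates a bad collider, and invoking the standard — but not automatic — fact that a $d$-connecting walk can be pruned to a $d$-connecting \emph{simple} path, with care that removing a loop does not flip the collider type at the vertex where the loop is cut. By comparison, the first direction is mostly bookkeeping once the observation that every vertex of an active path is an ancestor of $A \cup B \cup C$ is in hand.
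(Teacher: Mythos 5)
This proposition is not proved in the paper at all: it is quoted verbatim, with proof deferred to \cite[Proposition 3.25]{L96}, so there is no in-paper argument to compare yours against. Judged on its own, your sketch reconstructs the standard textbook proof correctly. The first direction is complete in all essentials: the single-peak structure of collider-free stretches, the observation that active colliders lie in $\mathrm{an}(C)$ so the whole path lives in $G_{\mathrm{an}(A\cup B\cup C)}$, and the collider-bypass via moral edges are exactly the right steps, and your check that non-colliders and endpoints avoid $C$ closes it. The converse is also the standard argument (expand moral edges into v-structures through a common child in the ancestral set, then repair colliders whose only descendants in $A\cup B\cup C$ lie in $A\cup B$ by rerouting along a directed path to such a descendant), and your termination measure is fine since the repair step deletes one bad collider, creates only non-colliders on the new directed segment, and leaves the retained colliders' status unchanged; note also that a repaired collider $c$ automatically avoids $C$ because it has no descendant in $C$ and is its own descendant. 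The two debts you flag are real but standard: the walk-to-path pruning (splice between the first and last occurrence of a repeated vertex and check that the spliced vertex is either a non-collider outside $C$ or a collider inheriting ancestry of $C$ from one of its occurrences) and the bookkeeping just described. With those lemmas written out, the argument is a correct and self-contained proof of the cited result.
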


\begin{definition}\label{def:directed_moralization}
Let $G=([p],E)$ be a DAG. The \textit{directed moralization} of $G$, denoted by $G^{dm}$, is the directed graph with the vertex set $[p]$ that has a directed edge for every directed edge in $E$, and a directed edge $u\to v$ whenever $u\to w$, $v\to w$ are edges in $G$ and $u<v$.
\end{definition}

\begin{remark}
Note that the directed moralization of every DAG $G$ is indeed a DAG since our variables are topologically ordered, i.e. if $i\to j$ is an edge in $G$, then $i < j$. 
In case one wants to use directed moralization on a DAG $(V,E)$ with vertices $V$ and edges $E$ one has to fix an ordering of the variables.

Moreover, directed moralization produces a perfect DAG after applying the operation sufficiently many times. Since we can only add edges, applying the directed moralization $\binom{p}{2}$ times results in a perfect DAG.
\end{remark}

\begin{definition}
Let $G=([p],E)$ be a DAG. We denote by $G^{\per}$ the perfect DAG created from $G$ after applying the directed moralization $\binom{p}{2}$ times.
\end{definition}

\begin{remark}
To produce a chordal graph to which a distribution in a DAG model $G$ is Markov, one can moralize $G$ and then take a triangulation of the resulting undirected graph. If an ordering of the vertices is fixed, this undirected graph can then be directed according to this ordering.

The graph $G^{\per}$ is one possible perfect graph one may produce using a particular triangulation. Iterating directed moralizations in Definition~\ref{def:directed_moralization} produces this triangulation on the skeleton of $G^{\per}$.
\end{remark}

The goal is to prove the following theorem about the generators of $\ker(\psi_{\TT})$. It implies that in any balanced CStree $\TT$ we may replace all context DAGs $H$ by $H^{\per}$ without changing the model.

\begin{theorem}
\label{thm:balanced_defined_by_perfect}
A CStree $\TT$ is balanced if and only if 
\[
\ker(\psi_{\TT})=\sum_{X_C=\xx_C\in\CC}I_{\sat((G_{X_C=\xx_C})^{\per})}.
\]
In particular, every decomposable CSmodel can be described by a collection of perfect DAGs.
\end{theorem}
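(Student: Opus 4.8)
The plan is to prove the two implications of the equivalence separately, the ``if'' direction being the short one. \emph{The ``if'' direction.} Suppose the displayed ideal identity holds. Every summand $I_{\sat((G_\cc)^{\per})}$ on the right-hand side is generated by the quadratic binomials of Section~\ref{subsec:csis}, hence so is their sum, hence so is $\ker(\psi_{\TT})$. Since $\ker(\psi_{\TT})$ is always prime, it is then toric, i.e.\ $\MM(\TT)$ is a toric model; and, for a CStree, $\ker(\psi_{\TT})$ is generated by binomials if and only if $\TT$ is balanced. Hence $\TT$ is balanced.

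\emph{The ``only if'' direction.} Assume $\TT$ is balanced. One inclusion comes for free: $(G_\cc)^{\per}$ is obtained from $G_\cc$ by adding edges, so $\glo((G_\cc)^{\per})\subseteq\glo(G_\cc)$ and thus $\sat((G_\cc)^{\per})\subseteq\sat(G_\cc)\subseteq\J(\TT)$; by Theorem~\ref{thm:algebra-saturated-statements} this yields $\sum_{\cc\in\CC}I_{\sat((G_\cc)^{\per})}\subseteq I_{\sat(\TT)}=\ker(\psi_{\TT})$. For the reverse inclusion I would start from Corollary~\ref{cor:saturated-statements-for-minimal-dags}, which gives $\ker(\psi_{\TT})=\sum_{\cc\in\CC}I_{\sat(G_\cc)}$, and replace the context DAGs by their perfect versions one context at a time. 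The inductive step is the following key lemma: for any distinguished minimal context, with context DAG $G_0$ and with the remaining minimal context DAGs $\{G_\cc\}_{\cc\in\CC_0}$, one has
\[
I_{\sat(G_0)}\ \subseteq\ I_{\sat((G_0)^{\per})}\ +\ \sum_{\cc\in\CC_0}I_{\sat(G_\cc)}.
\]
Since $I_{\sat((G_\cc)^{\per})}\subseteq I_{\sat(G_\cc)}$ for every $\cc$, applying the key lemma repeatedly, with each minimal context in turn playing the distinguished role, keeps the sum $\sum_\cc I_{\sat(\cdot)}$ equal to $\ker(\psi_{\TT})$ while turning every $G_\cc$ into $(G_\cc)^{\per}$; this is exactly the assertion of the theorem. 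The final sentence is then a reformulation: for a perfect DAG $H$ one has $I_{\sat(H)}=I_{\glo(H)}=\ker(\psi_{\TT_H})$ (Theorem~\ref{thm:algebra-saturated-statements} applied to the balanced tree $\TT_H$, together with Remark~\ref{rmk:DAG-representation-remark}), so the identity reads $\ker(\psi_{\TT})=\sum_{\cc\in\CC}I_{\glo((G_\cc)^{\per})}$, and intersecting $\mathcal{V}$ of this ideal with $\Delta_{|\RR|-1}^{\circ}$ realizes $\MM(\TT)$ as the intersection of the decomposable DAG models of the perfect context DAGs $(G_\cc)^{\per}$.

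To prove the key lemma one must show that every saturated CSI statement of $G_0$ that fails in $(G_0)^{\per}$ has its associated binomials recovered from the remaining context DAGs. Fix such a statement $\sigma$, viewed as a statement of $\TT$ by appending the distinguished context, and consider the first directed moralization step in the passage from $G_0$ to $(G_0)^{\per}$ that destroys it; say this step adds an edge $u\to v$ between unmarried co-parents $u,v$ of a vertex $w$. Because $\sigma$ is saturated, its conditioning set consists of all variables lying in neither of the two marginals it separates nor in the distinguished context, so an analysis via Proposition~\ref{prop:d-sep-moralization} shows $\sigma$ can be destroyed only if the $v$-structure $u\to w\leftarrow v$ is ``activated'' by the partition, which (in the saturated case) essentially forces $u$ and $v$ to lie on opposite sides of it. Now the non-adjacency of $u$ and $v$ in the minimal I-MAP $G_0$ records a CSI relation $X_u\independent X_v\mid X_T$ (in the distinguished context) for a suitable $T$, while the fact that $u$ and $v$ are nonetheless co-parents of $w$ is precisely the ``defect'' that, in a balanced CStree, is witnessed by refined minimal contexts --- this is the absorption phenomenon already visible after Example~\ref{ex:counterexample}. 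Combining this with Proposition~\ref{prop:weird_formula} and the contraction, weak union and absorption axioms, in the style of the proof of Theorem~\ref{thm:algebra-saturated-statements}, one rewrites $\sigma$ as a consequence of saturated statements valid in the remaining minimal contexts; since each such axiom step corresponds to a containment of the associated toric ideals, this gives $I_\sigma\subseteq I_{\sat((G_0)^{\per})}+\sum_{\cc\in\CC_0}I_{\sat(G_\cc)}$.

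The main obstacle is exactly this key lemma: a clean combinatorial description of which $v$-structures of a non-perfect minimal context DAG get moralized, a proof that each resulting missing CSI relation is genuinely recovered from the refined minimal contexts of the balanced CStree, and the bookkeeping that converts every CSI-axiom manipulation into an inclusion of toric ideals. Everything else reduces to results already established: Theorem~\ref{thm:algebra-saturated-statements}, Corollary~\ref{cor:saturated-statements-for-minimal-dags}, Proposition~\ref{prop:weird_formula}, and Proposition~\ref{prop:d-sep-moralization}.
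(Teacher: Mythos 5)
Your ``if'' direction and the easy inclusion $\sum_{\cc\in\CC} I_{\sat((G_{\cc})^{\per})}\subseteq\ker(\psi_{\TT})$ are correct and agree with the paper. The problem is that all the real content has been pushed into your ``key lemma'', which you acknowledge you do not prove; but that lemma essentially \emph{is} the theorem. The paper's proof of exactly this step is Lemma~\ref{lem:saturated_CI_induced_subgraph}, Lemma~\ref{lem:reduction_to_subgraphs} and Proposition~\ref{prop:induction_step}: a saturated statement $X_i\independent X_j|X_{[p]\setminus\{i,j\}}$ destroyed by one directed moralization is characterized by two explicit induced subgraphs on four or five vertices; one then fixes the remaining variables and passes to the corresponding context subtree, which is still balanced by Theorem~\ref{thm:balanced-for-every-context}, invokes the three-variable classification (Theorem~\ref{thm:three_perfect_iff_balanced}) to force a missing edge there, applies Proposition~\ref{prop:weird_formula}, and finally runs an induction on the number of bad induced subgraphs (Lemma~\ref{lem:reduction_to_subgraphs}) to reach a context DAG $G_{X_D=\xx_D}$ with $D\neq\emptyset$ that implies the needed context-specific statement; absorption and intersection then turn this into membership of the associated binomials. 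Your sketch (``the defect is witnessed by refined minimal contexts'', ``in the style of the proof of Theorem~\ref{thm:algebra-saturated-statements}'') contains none of these ingredients --- in particular neither the restriction to balanced context subtrees nor the use of the $p=3$ theorem, which is precisely where balancedness enters --- so the central claim remains unestablished.

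Moreover, even granting your key lemma, the bookkeeping does not close as stated: from $I_{\sat(G_i)}\subseteq I_{\sat((G_i)^{\per})}+\sum_{j\neq i}I_{\sat(G_j)}$ for every $i$ one cannot conclude $\sum_i I_{\sat(G_i)}\subseteq\sum_i I_{\sat((G_i)^{\per})}$; the inclusions feed into one another circularly, and the abstract implication ``$A\subseteq A'+B$ and $B\subseteq B'+A$ imply $A+B\subseteq A'+B'$'' is simply false (take $A=B\neq 0$ and $A'=B'=0$). The paper avoids this by proving something stronger, one directed-moralization step at a time: the binomials of the statements lost at that step are recovered from the \emph{other, unmodified} context DAGs whose contexts strictly refine the distinguished one ($D\neq\emptyset$ in Proposition~\ref{prop:induction_step}), so each replacement preserves the total ideal and the process terminates. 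To make your iteration valid you would need to state and prove this refinement (or some other well-founded ordering of the replacements), in addition to proving the key lemma itself.
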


\begin{lemma}\label{lem:saturated_CI_induced_subgraph}
Let $G=([p],E)$ be a DAG and let $S=X_i\independent X_j|X_{[p]\setminus \{i,j\}}$ be a saturated CI statement entailed by $G$. Then $S\in\sat(G)\setminus\sat(G^{dm})$ if and only if at least one of the following two statements holds.
\begin{enumerate}
    \item There exist $k,l\in [p]$ with $k>i,j$ and $l>k$ such that $G$ contains one of the following two graphs on the vertices $\{i,j,k,l\}$ as an induced subgraph.
    \begin{center}
    \begin{tikzpicture}[thick,scale=0.29]
	%---NODES---	 
	 \node[circle, draw, fill=black!0, inner sep=1pt, minimum width=1pt, minimum size=0.6cm] (i1) at (0,0) {$i$};
 	 \node[circle, draw, fill=black!0, inner sep=1pt, minimum width=1pt, minimum size=0.6cm] (j1) at (5,0) {$j$};
 	 \node[circle, draw, fill=black!0, inner sep=1pt, minimum width=1pt, minimum size=0.6cm] (k1) at (0,-5) {$k$};
 	 \node[circle, draw, fill=black!0, inner sep=1pt, minimum width=1pt, minimum size=0.6cm] (l1) at (5,-5) {$l$};

	%---EDGES---	 
 	 \draw[->]   (i1) -- (k1) ;
   \draw[->]   (j1) -- (l1) ;
   \draw[->]   (k1) -- (l1) ;

	%---NODES---	 
	 \node[circle, draw, fill=black!0, inner sep=1pt, minimum width=1pt, minimum size=0.6cm] (i2) at (10,0) {$i$};
 	 \node[circle, draw, fill=black!0, inner sep=1pt, minimum width=1pt, minimum size=0.6cm] (j2) at (15,0) {$j$};
 	 \node[circle, draw, fill=black!0, inner sep=1pt, minimum width=1pt, minimum size=0.6cm] (k2) at (10,-5) {$k$};
 	 \node[circle, draw, fill=black!0, inner sep=1pt, minimum width=1pt, minimum size=0.6cm] (l2) at (15,-5) {$l$};

	%---EDGES---	 
 	 \draw[->]   (i2) -- (l2) ;
   \draw[->]   (j2) -- (k2) ;
   \draw[->]   (k2) -- (l2) ;

\end{tikzpicture}
\end{center}
    \item There exist $l_1,l_2,k\in [p]$ with $k>i,j$ and $l_1,l_2>k$ such that $G$ contains the following graph on the vertices $\{i,j,k,l_1,l_2\}$ as an induced subgraph.
    \begin{center}
    \begin{tikzpicture}[thick,scale=0.29]
	%---NODES---	 
	   \node[circle, draw, fill=black!0, inner sep=1pt, minimum width=1pt, minimum size=0.6cm] (i) at (0,0) {$i$};
 	 \node[circle, draw, fill=black!0, inner sep=1pt, minimum width=1pt, minimum size=0.6cm] (j) at (10,0) {$j$};
 	 \node[circle, draw, fill=black!0, inner sep=1pt, minimum width=1pt, minimum size=0.6cm] (k) at (5,-5) {$k$};
 	 \node[circle, draw, fill=black!0, inner sep=1pt, minimum width=1pt, minimum size=0.6cm] (l1) at (0,-5) {$l_1$};
      \node[circle, draw, fill=black!0, inner sep=1pt, minimum width=1pt, minimum size=0.6cm] (l2) at (10,-5) {$l_2$};

	%---EDGES---	 
 	 \draw[->]   (i) -- (l1) ;
   \draw[->]   (j) -- (l2) ;
   \draw[->]   (k) -- (l1) ;
   \draw[->]   (k) -- (l2) ;

\end{tikzpicture}
\end{center}
\end{enumerate}
\end{lemma}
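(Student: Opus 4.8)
The plan is to translate the two conditions $S\in\sat(G)$ and $S\notin\sat(G^{dm})$ into statements about adjacency in moralized graphs via Proposition~\ref{prop:d-sep-moralization}. For a \emph{saturated} statement $S=X_i\independent X_j\mid X_{[p]\setminus\{i,j\}}$ the conditioning set has ancestral closure all of $[p]$, so for any DAG $H$ on $[p]$ the statement $S$ is entailed by $H$ if and only if $i$ and $j$ are non-adjacent in $H^m$; equivalently, $H$ has neither an edge between $i$ and $j$ nor a common child of $i$ and $j$. Since by hypothesis $S\in\sat(G)$, I would record that $G$ has no $ij$-edge and no common child of $i,j$, so that $S\in\sat(G)\setminus\sat(G^{dm})$ is equivalent to $i,j$ being adjacent in $(G^{dm})^m$. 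I would first rule out a direct $ij$-edge in $G^{dm}$: such an edge is either inherited from $G$ or is a moralization edge, and the latter forces a common child of $i,j$ already in $G$; both possibilities contradict $S\in\sat(G)$. Hence the condition reduces to: $i$ and $j$ have a common child $k$ in $G^{dm}$.

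Next I would exploit that $i,j$ have \emph{no} common child in $G$, so at least one of the edges $i\to k$, $j\to k$ of $G^{dm}$ is a directed-moralization edge (and, as another consequence of $S\in\sat(G)$, at most one of them is original). Unwinding Definition~\ref{def:directed_moralization}, a new edge $i\to k$ means $i<k$ and $G$ contains $i\to l$ and $k\to l$ for some $l>k$, and likewise for $j\to k$. A case split on which of $i\to k$, $j\to k$ are original then produces exactly the listed configurations: if one is original and the other new we obtain $\{i\to k,\ j\to l,\ k\to l\}$ or its $i\leftrightarrow j$ mirror, i.e.\ item (1); if both are new we obtain $\{i\to l_1,\ k\to l_1,\ j\to l_2,\ k\to l_2\}$, i.e.\ item (2), where $l_1\neq l_2$ because $l_1=l_2$ would be a common child of $i,j$ in $G$. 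The converse direction comes from the same dictionary: given configuration (1) or (2), the ordering hypotheses $k>i,j$ and $l>k$ (resp.\ $l_1,l_2>k$) certify that $G^{dm}$ contains the moral edges making $k$ a common child of $i$ and $j$, so $i,j$ are adjacent in $(G^{dm})^m$ and $S$ fails in $G^{dm}$.

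The one point requiring care — and what I expect to be the main obstacle — is that the configurations in (1) and (2) must appear as \emph{induced} subgraphs on the indicated vertex sets. For the forward direction, $S\in\sat(G)$ supplies most of this, since forbidding the $ij$-edge and any common child of $i,j$ kills precisely the edges that would spoil the pictures (for instance, an edge $i\to l$ in the first graph of (1), or $i\to l_2$ or $j\to l_1$ in (2), would create a common child). The genuinely delicate case is an instance of (2) carrying an extra edge that is \emph{not} excluded this way — namely an original edge $i\to k$ or $j\to k$, or an edge $l_1\to l_2$. In each such subcase I would show that a four-vertex subset already induces a graph of type (1): e.g.\ if $i\to k\in G$ then $\{i,j,k,l_2\}$ induces the first graph of (1), and if $l_1\to l_2\in G$ then $\{i,j,l_1,l_2\}$ does (using $l_1>k>j$ for the ordering). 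Checking that this subcase analysis is exhaustive, and that each chosen four- or five-vertex subset really induces the claimed subgraph — which again reduces to the edges forbidden by $S\in\sat(G)$ together with the topological order — is the bulk of the remaining work; everything else is a routine application of Proposition~\ref{prop:d-sep-moralization}.
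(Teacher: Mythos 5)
Your proposal is correct and follows essentially the same route as the paper's proof: translate both conditions through Proposition~\ref{prop:d-sep-moralization} (for a saturated statement, entailment means no $ij$-edge and no common child), observe that $i,j$ must acquire a common child $k$ in $G^{dm}$, and split according to whether one or both of the edges $i\to k$, $j\to k$ are directed-moralization edges, which produces configurations (1) and (2), with the converse read off from the moral edges these configurations force. Your extra subcase handling a possible edge $l_1\to l_2$ in configuration (2) --- reducing it to a four-vertex induced instance of (1) --- is in fact slightly more careful than the paper's own argument, which dismisses all additional edges with the same common-child/adjacency reasoning even though that reasoning does not literally exclude an $l_1$--$l_2$ edge.
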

\begin{proof}
Since $S$ holds in $G$, the vertices $i$ and $j$ do not have a common child by Proposition~\ref{prop:d-sep-moralization}. However, they do have a common child after directed moralization. Assume this child is labeled $k$ with $k\in [p]$, $k>i,j$. 
If $k$ is a child of one of $i$ or $j$ and the other edge is added by the directed moralization we have one of the graphs in (1) as a subgraph of $G$. Moreover, no other edge can exist in the induced subgraph on these four vertices since then $i,j$ would have a common child or be connected via an edge.
If $k$ is not a child of either $i$ or $j$ in $G$ then the graph in (2) is a subgraph of $G$. Again other edges cannot exist for the same reason, hence this graph is the induced subgraph on these five vertices.

For the other direction note that if any of these graphs is an induced subgraph of $G$, then $(G^{dm})^m$ has an edge between $i$ and $j$. Thus $S\notin\sat(G^{dm})$ by Proposition~\ref{prop:d-sep-moralization}.
\end{proof}

Let $G=([p],E)$ be a DAG and let $i,j\in [p]$ with no common child and not adjacent. We denote the number of pairs $k,l$ as in Lemma~\ref{lem:saturated_CI_induced_subgraph}(1) by $n_1^{i,j}(G)$ and by $n_2^{i,j}(G)$ the number of triples $k,l_1,l_2$ as in Lemma~\ref{lem:saturated_CI_induced_subgraph}(2).

\begin{lemma}\label{lem:reduction_to_subgraphs}
     Let $G=([p],E)$ be a DAG and let $i,j\in [p]$ with no common child and not adjacent. Let $k,l$ as in Lemma~\ref{lem:saturated_CI_induced_subgraph}(1). Let $H$ be a subgraph of $G$ obtained by removing at least one of the edges in the induced subgraph on $\{i,j,k,l\}$. Then
    \begin{enumerate}
        \item $n_1^{i,j}(H)<n_1^{i,j}(G)$ and
        \item $n_1^{i,j}(H)+n_2^{i,j}(H)\leq n_1^{i,j}(G)+n_2^{i,j}(G)$.
    \end{enumerate}
    Assume $n_1^{i,j}(G)=0$. Let $k,l_1,l_2$ as in Lemma~\ref{lem:saturated_CI_induced_subgraph}(2). Let $H$ be a subgraph of $G$ obtained by removing at least one of the edges in the induced subgraph on $\{i,j,k,l_1,l_2\}$. Then
    \begin{enumerate}
        \item[(3)] $n_1^{i,j}(H)=0$ and
        \item[(4)] $n_2^{i,j}(H)<n_2^{i,j}(G)$.
    \end{enumerate}
\end{lemma}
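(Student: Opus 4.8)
The plan is to deduce all four statements from two \emph{monotonicity} facts describing how the witness sets of Lemma~\ref{lem:saturated_CI_induced_subgraph} behave under passing to a subgraph, together with the obvious remark that deleting an edge of an induced config-1 (resp.\ config-2) subgraph destroys the corresponding witness. Write $W_1(G)$ for the set of pairs $(k,l)$ as in Lemma~\ref{lem:saturated_CI_induced_subgraph}(1) and $W_2(G)$ for the set of triples $(k,l_1,l_2)$ as in Lemma~\ref{lem:saturated_CI_induced_subgraph}(2), so that $n_1^{i,j}(G)=|W_1(G)|$ and $n_2^{i,j}(G)=|W_2(G)|$; since $H$ is a subgraph of $G$, the pair $i,j$ still has no common child and is non-adjacent in $H$, so $W_1(H)$ and $W_2(H)$ make sense. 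The first fact is that \emph{no config-1 witness is ever created}, i.e.\ $W_1(H)\subseteq W_1(G)$ for every subgraph $H\subseteq G$: if $(k',l')\in W_1(H)$ then $G[\{i,j,k',l'\}]$ already contains one of the config-1 graphs, and any further edge would be either an edge joining $i$ and $j$ (impossible, since $i,j$ are non-adjacent in $G$) or a second edge out of $i$ or out of $j$ into $\{k',l'\}$, giving $i,j$ a common child (impossible); hence $G[\{i,j,k',l'\}]$ is exactly a config-1 graph and $(k',l')\in W_1(G)$.

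Statements (1) and (3) are then immediate. For (1): $(k,l)\in W_1(G)$ by hypothesis, and at least one of the three edges of $G[\{i,j,k,l\}]$ is deleted in $H$, so $(k,l)\notin W_1(H)$; combined with $W_1(H)\subseteq W_1(G)$ this gives $n_1^{i,j}(H)\le n_1^{i,j}(G)-1$. For (3): if $n_1^{i,j}(G)=0$ then $W_1(G)=\varnothing$, hence $W_1(H)=\varnothing$, i.e.\ $n_1^{i,j}(H)=0$.

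For (4) I would prove the second monotonicity fact, which requires the extra hypothesis $n_1^{i,j}(G)=0$: under it, \emph{no config-2 witness is created}, i.e.\ $W_2(H)\subseteq W_2(G)$. Given $(k',l_1',l_2')\in W_2(H)$, the only way $G[\{i,j,k',l_1',l_2'\}]$ can properly contain the config-2 graph is through an extra edge among its six non-edges, and these are ruled out in turn: an edge joining $i$ and $j$ is excluded; a second edge out of $i$ or $j$ into $\{l_1',l_2'\}$ gives a common child; an edge $i\to k'$ (resp.\ $j\to k'$) creates an induced config-1 graph on $\{i,j,k',l_2'\}$ (resp.\ $\{i,j,k',l_1'\}$), contradicting $n_1^{i,j}(G)=0$; and an edge between $l_1'$ and $l_2'$ creates an induced config-1 graph on $\{i,j,l_1',l_2'\}$, again a contradiction. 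So $G[\{i,j,k',l_1',l_2'\}]$ is the config-2 graph and $(k',l_1',l_2')\in W_2(G)$. Statement (4) now follows as (1) did: $(k,l_1,l_2)\in W_2(G)$, one of the four edges of $G[\{i,j,k,l_1,l_2\}]$ is deleted, so $(k,l_1,l_2)\notin W_2(H)$, whence $n_2^{i,j}(H)\le n_2^{i,j}(G)-1$.

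The genuinely delicate point is (2), where $n_1^{i,j}(G)=0$ is not assumed, so deleting an edge of $G[\{i,j,k,l\}]$ may create new config-2 witnesses. The plan is to show that these are always compensated by destroyed config-1 witnesses: by the analysis of the previous paragraph, for each $(k',l_1',l_2')\in W_2(H)\setminus W_2(G)$ the subgraph $G[\{i,j,k',l_1',l_2'\}]$ is the config-2 graph plus an extra edge $e$, $e$ must be one of the at most three deleted edges of $G[\{i,j,k,l\}]$, and $e$ together with two of the config-2 edges spans an induced config-1 subgraph of $G$, hence a witness $\varphi(k',l_1',l_2')\in W_1(G)$ that is destroyed in $H$ because $e$ was deleted. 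Granting that $\varphi$ can be chosen injective, $|W_2(H)\setminus W_2(G)|\le|W_1(G)\setminus W_1(H)|$, and then, using $W_1(H)\subseteq W_1(G)$,
\[
n_1^{i,j}(H)+n_2^{i,j}(H)\le\bigl(|W_1(G)|-|W_1(G)\setminus W_1(H)|\bigr)+\bigl(|W_2(G)|+|W_2(H)\setminus W_2(G)|\bigr)\le n_1^{i,j}(G)+n_2^{i,j}(G),
\]
which is (2). The main obstacle is making $\varphi$ well-defined and injective: this needs a case split on which edge of $G[\{i,j,k,l\}]$ is deleted (a ``crossing'' edge out of $i$ or $j$, versus the edge $k\to l$) and on how the five vertices $\{i,j,k',l_1',l_2'\}$ overlap $\{i,j,k,l\}$; the awkward case is the deleted edge $k\to l$, where the target of $\varphi$ must also record the parameter $k'$ so that distinct new config-2 witnesses are not collapsed, and one must additionally account for config-2 witnesses of $G$ that are \emph{destroyed} in $H$. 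Everything leading to (1), (3) and (4) is short once the ``no common child, non-adjacent'' hypothesis is used to suppress stray edges; the combinatorial core of the lemma is precisely this bookkeeping for (2).
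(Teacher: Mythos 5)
Your treatment of (1), (3) and (4) is correct and follows the same route as the paper: no witness of type (1) of Lemma~\ref{lem:saturated_CI_induced_subgraph} can be created in a subgraph (an extra edge on $\{i,j,k',l'\}$ would either make $i,j$ adjacent or give them a common child), which together with the destruction of the given witness yields (1) and (3); and under $n_1^{i,j}(G)=0$ no witness of type (2) can be created either, which yields (4). Your case analysis for (4) is in fact slightly more careful than the paper's written argument, which only mentions the possible extra edges $i\to k$ and $j\to k$ and omits the case of an edge between $l_1$ and $l_2$; you exclude that case correctly, since such an edge would give an induced type-(1) subgraph on $\{i,j,l_1,l_2\}$, contradicting $n_1^{i,j}(G)=0$.

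The genuine gap is part (2), which you leave open, and the compensation map $\varphi$ you propose cannot be made injective: the ``awkward case'' you flag (the deleted edge being $k\to l$) is fatal. Concretely, order the vertices $i<j<k_1<k_2<k<l$ and let $G$ have edges $i\to k$, $j\to l$, $k\to l$, $k_1\to k$, $k_1\to l$, $k_2\to k$, $k_2\to l$. Then $i,j$ are non-adjacent with no common child, $n_1^{i,j}(G)=1$ (the pair $(k,l)$) and $n_2^{i,j}(G)=0$ (the edge $k\to l$ spoils inducedness for the triples $(k_1,k,l)$ and $(k_2,k,l)$). Removing the single edge $k\to l$ destroys the one type-(1) witness but creates the two type-(2) witnesses $(k_1,k,l)$ and $(k_2,k,l)$, so $n_1^{i,j}(H)+n_2^{i,j}(H)=2>1=n_1^{i,j}(G)+n_2^{i,j}(G)$, and adding more vertices $k_m$ makes the discrepancy arbitrarily large. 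So not only does your injectivity fail; the inequality (2) itself fails for such graphs. The paper's own proof of (2) has the same blind spot: it asserts that a newly created triple forces the removed edge to be $i\to k$ or $j\to k$, overlooking the possibility that it is the edge between $l_1$ and $l_2$, i.e.\ the edge $k\to l$ of the original witness. Note that only parts (1), (3) and (4) — exactly the parts you did establish — are invoked in the proof of Proposition~\ref{prop:induction_step}, so the issue does not propagate further, but as a proof of the lemma as stated your proposal (like the paper's argument for (2)) is incomplete at precisely this point.
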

\begin{proof}
    (1): Assume there is a pair $k,l$ such that the induced subgraph on $\{i,j,k,l\}$ in $G$ is not of the form in Lemma~\ref{lem:saturated_CI_induced_subgraph}(1) but the induced subgraph on these vertices in $H$ does have that form. Then the induced subgraph on $\{i,j,k,l\}$ in $G$ has at least one more edge.
    Since $i,j$ are not adjacent in $G$ by assumption, it has to be one of the edges $i\to l, j\to k$. But in both cases $i,j$ have a common child.
    When removing an edge contained in an induced subgraph on vertices $\{i,j,k,l\}$ as in Lemma~\ref{lem:saturated_CI_induced_subgraph}(1) then the total number of such pairs in $H$ is strictly smaller.
    
    (2) Assume we have a triple $k,l_1,l_2$ such that the induced subgraph on $\{i,j,k,l\}$ in $G$ is not of the form in Lemma~\ref{lem:saturated_CI_induced_subgraph}(2) but the induced subgraph on these vertices in $H$ does have that form. 
    With the same reasoning as above, exactly one of the two edges $i\to k$ or $j\to k$ must have been removed. Therefore, either the induced subgraph on $\{i,j,k,l_1\}$ or on $\{i,j,k,l_2\}$ in $G$ is of the form in Lemma~\ref{lem:saturated_CI_induced_subgraph}(1). This shows that $n_2^{i,j}(H)-n_2^{i,j}(G)$ is at most $n_1^{i,j}(G)-n_1^{i,j}(H)$.

    (3) We already saw in the proof of (1) that no new pair $k,l$ can emerge.

    (4) If there was a triple $\{k,l_1,l_2\}$ in $H$ that has the form in Lemma~\ref{lem:saturated_CI_induced_subgraph}(2), then it was already there in $G$ as the only edges that can be added to this subgraph not leading to a common child are $i\to k$ or $j\to k$. However, both imply $n_1^{i,j}(G)\geq 1$, a contradiction.
\end{proof}

\begin{proposition}
\label{prop:induction_step}
Let $\TT$ be a balanced CStree and let $X_i\independent X_j|X_{[p]\setminus \{i,j\}}\in\sat(G_\emptyset)\setminus\sat((G_\emptyset)^{dm})$. For every $\xx_{[p]\setminus \{i,j\}}\in\RR_{[p]\setminus \{i,j\}}$ the CSI statement $X_i\independent X_j|X_{[p]\setminus \{i,j\}}=\xx_{[p]\setminus \{i,j\}}$ is implied by some context DAG $G_{X_D=\xx_D}$ with $D\neq \emptyset$.
\end{proposition}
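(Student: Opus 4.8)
Write $N:=[p]\setminus\{i,j\}$ and fix an arbitrary $\xx_N\in\RR_N$; we must produce a non-empty context $D\subset N$ (with $\xx_D=\xx_N|_D$) such that $X_i\independent X_j|X_{N\setminus D}$ is a $d$-separation statement of the associated context DAG $G_{X_D=\xx_D}$, for then appending the context $X_D=\xx_D$ and specializing the remaining conditioning coordinates to $\xx_N$ yields exactly $X_i\independent X_j|X_N=\xx_N$. The first move is to feed $X_i\independent X_j|X_N\in\sat(G_\emptyset)\setminus\sat((G_\emptyset)^{dm})$ into Lemma~\ref{lem:saturated_CI_induced_subgraph}, splitting into case (1) and case (2) and fixing the corresponding induced subgraph of $G_\emptyset$ on $\{i,j,k,l\}$ (resp.\ $\{i,j,k,l_1,l_2\}$). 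After possibly swapping the names $i$ and $j$ we may assume in case (1) that $G_\emptyset$ induces $i\to k$, $j\to l$, $k\to l$ on $\{i,j,k,l\}$, with $i,j<k<l$; in particular $j$ and $k$ are \emph{non-adjacent parents of $l$} in $G_\emptyset$, while $i\notin\pa_{G_\emptyset}(l)$ and $i\notin\pa_{G_\emptyset}(j)$. Since $X_i\independent X_j|X_N$ lies in $\glo(G_\emptyset)\subset\J(\TT)$, the specialization $X_i\independent X_j|X_N=\xx_N$ already lies in $\J(\TT)=\bigcup_{\cc\in\CC_\TT}\J(\cc)$; the entire content of the proposition is that it admits a \emph{non-empty} witness, not merely the (trivially available) empty one.

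\textbf{Locating the context.} The obstruction to $X_i\independent X_j|X_N$ surviving directed moralization is the vertex $l$ (resp.\ $l_1$): it is because $j,k$ are non-adjacent parents of $l$ that $(G_\emptyset)^{dm}$ acquires the edge $j\to k$, turning $k$ into a common child of $i$ and $j$. I would therefore look for a context $D$ that "cuts $i$ off from $l$'' without touching $i$ or $j$. The natural candidate is $D=\{k\}$: since $i\to k$ is the only edge by which $i$ reaches the triangle, fixing $X_k$ to the value $\xx_N|_k$ and passing to the context subtree $\TT_{X_k=\xx_N|_k}$ should leave $i$ with no active connection to $j$. Concretely, I would use Theorem~\ref{thm:balanced-for-every-context} to know $\TT_{X_k=\xx_N|_k}$ is again balanced, and then argue that $i$ and $j$ are non-adjacent in its empty-context DAG $G_{X_k=\xx_N|_k,\emptyset}$ and have no common child there, i.e.\ $X_i\independent X_j|X_{N\setminus\{k\}}$ holds in $G_{X_k=\xx_N|_k,\emptyset}$. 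In case (2), where neither $i$ nor $j$ is a parent of $k$, the same role is played by a context $D$ fixing the coordinates of $\pa_{G_\emptyset}(l_1)\setminus\{i\}$ (which avoids $j$), and one runs the same argument; the choice of $D$ may then involve more than one vertex, but it is always a non-empty subset of $N$ because it contains $k$.

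\textbf{The hard step.} The crux — and the step I expect to be the main obstacle — is proving that the separation $X_i\independent X_j|X_{N\setminus\{k\}}$ is \emph{genuinely carried by the minimal I-MAP} $G_{X_k=\xx_N|_k,\emptyset}$, rather than merely being valid in the context $X_k=\xx_N|_k$: an I-MAP entails only a subset of the valid conditional independences, so validity alone is not enough. This is exactly where balancedness must be used. The point is that $G_\emptyset$ is non-perfect (its parents of $l$ do not form a clique), so by Theorem~\ref{thm:perfect-iff-balanced-iff-decomposable} $\TT$ cannot equal $\TT_{G_\emptyset}$; by Lemma~\ref{lem:rooted_subtrees}(1) the staging of $\TT$ is a strict coarsening of that of $\TT_{G_\emptyset}$, and the balanced equations at the level preceding $\max(i,j)$ constrain which merges are possible. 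I would combine the balanced equation $t_\TT(vs)t_\TT(wr)=t_\TT(vr)t_\TT(ws)$ (Definition~\ref{def:balanced}) for the stage of $\TT$ containing the relevant nodes of the context subtree with Proposition~\ref{prop:weird_formula} (in its ``context DAG'' reformulation: an edge into $l$ that disappears in a richer context forces every vertex of that context to point into $l$) to conclude that, in the context $X_k=\xx_N|_k$, the staging at that level cannot simultaneously separate the $X_i$-coordinate and the $X_j$-coordinate — which is precisely the assertion that $i$ and $j$ are non-adjacent with no common child in $G_{X_k=\xx_N|_k,\emptyset}$. Assembling the two cases then gives the proposition for every $\xx_N$.
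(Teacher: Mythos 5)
Your opening move (Lemma~\ref{lem:saturated_CI_induced_subgraph}) matches the paper, and you correctly identify the crux that validity of a CI statement in a context is weaker than entailment by the associated minimal I-MAP; but there are two genuine gaps. First, the witness you construct is the wrong kind of object. The proposition, as it is consumed in the proof of Theorem~\ref{thm:balanced_defined_by_perfect}, must exhibit a non-empty context belonging to the fixed collection $\CC$ satisfying $\J(\TT)=\bigcup_{X_C=\xx_C\in\CC}\glo(G_{X_C=\xx_C})$ (e.g.\ a minimal context) whose context DAG entails the statement. Your candidate $D=\{k\}$ is a context of your own choosing: in general $X_k=(\xx_N)_k$ does not lie in $\CC$, and what you analyze is the empty-context DAG of the context subtree $\TT_{X_k=(\xx_N)_k}$, which is not one of the $G_{X_D=\xx_D}$. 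Nor can this be repaired by tracing your statement back through $\J(\TT)=\bigcup_{\cc\in\CC}\J(\cc)$: the statement you chase is a specialization of $X_i\independent X_j|X_{[p]\setminus\{i,j\}}\in\glo(G_\emptyset)$, so the context in $\CC$ accounting for it may well be the empty one. Concretely, in the model of Figure~\ref{fig:directed_moralization} with $(i,j)=(2,3)$, $(k,l)=(4,5)$, the model's contexts are $\emptyset$, $X_1=0$, $X_1=1$; the context $X_4=x_4$ is not among them, $X_2\independent X_3|X_{1,5},X_4=x_4$ traces back to the empty context, and the non-empty witnesses the proposition promises are the contexts $X_1=x_1$, which your construction never sees. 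This is exactly why the paper does not argue about the pair $(i,j)$ directly: being already implied by $G_\emptyset$, that statement can never force a non-empty witness. Instead the paper uses balancedness (restriction to the context on all of $C=[p]\setminus\{i,j,k,l\}$ via Theorem~\ref{thm:balanced-for-every-context}, then fixing $X_i$ to invoke the three-variable Theorem~\ref{thm:three_perfect_iff_balanced}, plus Proposition~\ref{prop:weird_formula}) to produce a CSI statement that contradicts an edge of $G_\emptyset$ --- one of $X_i\independent X_k$, $X_k\independent X_l$, $X_j\independent X_l$ given the rest in the context $X_C=\xx_C$ --- and non-emptiness of the witnessing context in $\CC$ is then automatic.

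Second, even under your reading, the hard step is not closed. In case (1) the vertex $k$ is a child of $i$, so passing to the context $X_k=(\xx_N)_k$ conditions on a descendant of $i$; this can create new adjacencies in the minimal I-MAP (the collider effect), and nothing in your sketch rules out $i$ and $j$ acquiring a common child in $G_{X_k=(\xx_N)_k,\emptyset}$, which is precisely what would destroy the saturated statement there. A single balancedness equation together with Proposition~\ref{prop:weird_formula} is not enough as stated: the paper needs the reduction to three variables (where balanced implies perfect) to delete an edge of the bad subgraph, and --- because the deleted edge may be $j\to l$ or $k\to l$, i.e.\ not incident to the pair $(i,j)$ --- it then needs the descent over the counts $n_1^{i,j}$, $n_2^{i,j}$ of Lemma~\ref{lem:reduction_to_subgraphs}, iterating through successively larger contexts until the pair $(i,j)$ itself is separated in a non-empty context DAG. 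Your proposal contains neither the reduction that makes perfectness applicable to the right subconfiguration nor this induction.
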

\begin{proof}
    By Lemma~\ref{lem:saturated_CI_induced_subgraph} one of the graphs in Lemma~\ref{lem:saturated_CI_induced_subgraph} is contained in $G$ as a subgraph.
    Assume first we are in case (1). By changing the roles of $i$ and $j$ we can assume the graph is
    \begin{center}
    \begin{tikzpicture}[thick,scale=0.29]
	%---NODES---	 
	 \node[circle, draw, fill=black!0, inner sep=1pt, minimum width=1pt, minimum size=0.6cm] (i1) at (0,0) {$i$};
 	 \node[circle, draw, fill=black!0, inner sep=1pt, minimum width=1pt, minimum size=0.6cm] (j1) at (5,0) {$j$};
 	 \node[circle, draw, fill=black!0, inner sep=1pt, minimum width=1pt, minimum size=0.6cm] (k1) at (0,-5) {$k$};
 	 \node[circle, draw, fill=black!0, inner sep=1pt, minimum width=1pt, minimum size=0.6cm] (l1) at (5,-5) {$l$};

	%---EDGES---	 
 	 \draw[->]   (i1) -- (k1) ;
   \draw[->]   (j1) -- (l1) ;
   \draw[->]   (k1) -- (l1) ;
\end{tikzpicture}
\end{center}
Let $C:=[p]\setminus \{i,j,k,l\}$ and let $\xx_C\in\RR_C$ be arbitrary. By Theorem~\ref{thm:balanced-for-every-context} the CStree $\TT_{\cc}$ is still balanced. Let $G$ be the empty context DAG of this CStree, i.e. $G:=G_{\TT_{\cc},\emptyset}$.
We claim that at least one of the three edges is missing in $G$:

\textit{Proof of claim}:
Let $\xx_i\in\RR_i$ be any outcome and consider the balanced CStree $\TT_{\cc,X_i=\xx_i}$. By Theorem~\ref{thm:three_perfect_iff_balanced} its empty context DAG is perfect and thus one of the edges $j\to l$ or $k\to l$ is missing in $G_{\TT_{\cc,X_i=\xx_i},\emptyset}$. The argument for both cases is analogous, thus we do the proof for $j\to l$ only.
In this case the CSI statements $X_l\independent X_j|X_k,\cc,X_i=\xx_i$ and $X_l\independent X_i|X_{\{j,k\}},\cc$ hold. Using Proposition~\ref{prop:weird_formula} these imply the statement $X_l\independent X_{i,j}|X_k,\cc$ which is not true as there is an edge $j\to l$. \qedsymbol

Depending on which edge is missing, one of the CSI statements $X_i\independent X_k|X_{j,l},\cc$, $X_k\independent X_l|X_{i,j},\cc$, $X_j\independent X_l|X_{i,k},\cc$ holds in $\TT$.
We do the proof for the first one, the other two work analogously. The statement $X_i\independent X_k|X_{C\cup\{j,l\}}$ does not hold in $G_\emptyset$ therefore there is a context DAG $G_{X_D=\xx_D}$ with $D\subset C$, $(\xx_C)_D=\xx_D$ and $D\neq\emptyset$ such that the CSI statement $X_i\independent X_k|X_{C\setminus D\cup\{j,l\}}, X_D=\xx_D$ is entailed by $G_{X_D=\xx_D}$. There is now at least one less subgraph of type (1) in the empty context DAG of $\TT_{X_D=\xx_D}$ by Lemma~\ref{lem:reduction_to_subgraphs}.
Continuing this process will result in a $D$ with no subgraphs of type (1) and only subgraphs of type (2).

The argument for (2) works similarly. To receive a DAG on three vertices as in the argument above we fix $X_j$ and $X_{l_2}$. In the end, again by Lemma~\ref{lem:reduction_to_subgraphs} there are no such induced subgraphs at all and thus the CSI statement $X_i\independent X_j|X_{C\setminus D\cup\{j,l\}}, X_D=\xx_D$ holds in $G_{X_D=\xx_D}$, hence also $X_i\independent X_j|X_{k,l},\cc$.
\end{proof}

\begin{proof}[{Proof of \ref{thm:balanced_defined_by_perfect}}]
Assume the equality holds. Since $G_{X_C=\xx_C}^{\per}$ is perfect, the ideal $I_{\sat(G_{X_C=\xx_C}^{\per})}$ is a toric ideal for each context $X_C=\xx_C\in\CC$. Therefore, $\ker(\psi_{\TT})$ is a binomial ideal. Since it is also prime, we conclude that $\ker(\psi_\TT)$ is toric. This is equivalent to $\TT$ being balanced by \cite[Theorem 10]{DG20}.
For the additional statement we note that we may take $\CC:=\CC_{\TT}$ and by moralizing we find a collection as required.

For the other direction it suffices to prove that using directed moralization once on an arbitrary context DAG does not alter the set of saturated CSI statements in $\TT$. Let $X_C=\xx_C\in\CC$. By replacing $\TT$ with $\TT_{X_C=\xx_C}$ which is still balanced by Theorem~\ref{thm:balanced-for-every-context} we may assume that we applied directed moralization to the empty context DAG. 
We want to prove
\[
\sum_{X_C=\xx_C\in\CC}I_{\sat((G_{X_C=\xx_C}))}=\sum_{X_C=\xx_C\in\CC\setminus\{\emptyset\}}I_{\sat((G_{X_C=\xx_C}))}+I_{\sat((G_{\emptyset})^{dm})}.
\]
Let $X_A\independent X_B|X_S$ be a saturated CSI statement implied by $G_\emptyset$ but not by $(G_\emptyset)^{dm}$. It suffices to show that $X_i\independent X_j|X_{[p]\setminus \{i,j\}}$ is implied by some other context DAG for every $i\in A, j\in B$ by using the intersection axiom.

By Proposition~\ref{prop:induction_step} for every $\xx_{[p]\setminus \{i,j\}}\in\RR_{[p]\setminus \{i,j\}}$ the CSI statement $X_i\independent X_j|X_{[p]\setminus \{i,j\}}=\xx_{[p]\setminus \{i,j\}}$ is implied by some context DAG $G_{X_F=\xx_F}$ with $X_F=\xx_F\in\CC$ and $F\neq\emptyset$. Using absorption we see that all polynomials associated to the statement $X_A\independent X_B|X_S$ are contained in $\sum_{X_C=\xx_C\in\CC\setminus\{\emptyset\}}I_{\sat((G_{X_C=\xx_C}))}$.
\end{proof}

\begin{example}
    In Figure~\ref{fig:directed_moralization} we give an example of a binary, balanced CStree where we can use directed moralization on the empty context DAG twice to obtain a perfect DAG. First, the edge $3\to 4$ is added and then the edge $2\to 3$ is added. This example can easily be generalized to obtain a DAG on $p$ vertices where we can apply directed moralization $p-3$ times and obtain an additional edge each time. For this we pick a path ending in $p$ and starting with 2, and omitting 3. The edge $3\to p$ is then added. Lastly, we connect 1 to everything.
    The two other context contexts should remove the edges $3\to 5$ and $p-1\to p$ respectively.
    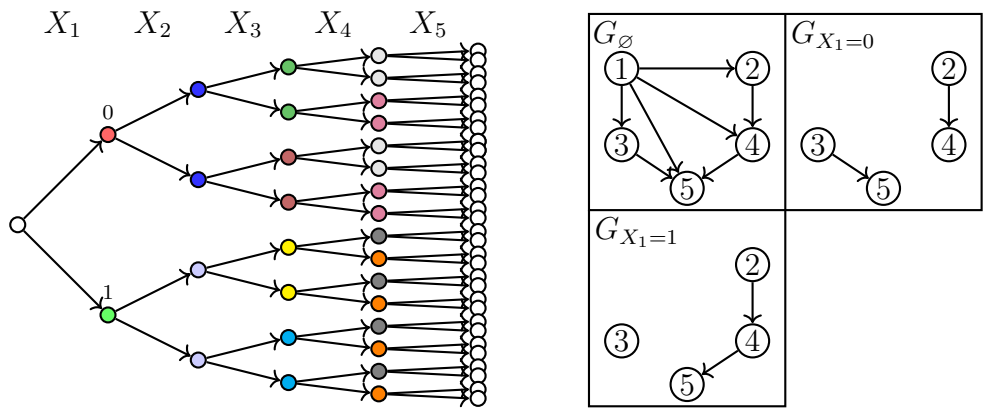
\begin{figure}[H]

\begin{center}
\begin{tikzpicture}[thick,scale=0.3]
\node[circle, draw, fill=black!0, inner sep=2pt, minimum width=1pt] () at (4.4,0.2)  {};
\node[circle, draw, fill=black!0, inner sep=2pt, minimum width=1pt] () at (4.4,-0.2)  {};
\node[circle, draw, fill=black!0, inner sep=2pt, minimum width=1pt] () at (4.4,-0.8)  {};
\node[circle, draw, fill=black!0, inner sep=2pt, minimum width=1pt] () at (4.4,-1.2)  {};
\node[circle, draw, fill=black!0, inner sep=2pt, minimum width=1pt] () at (4.4,-1.8)  {};
\node[circle, draw, fill=black!0, inner sep=2pt, minimum width=1pt] () at (4.4,-2.2)  {};
\node[circle, draw, fill=black!0, inner sep=2pt, minimum width=1pt] () at (4.4,-2.8)  {};
\node[circle, draw, fill=black!0, inner sep=2pt, minimum width=1pt] () at (4.4,-3.2)  {};
\node[circle, draw, fill=black!0, inner sep=2pt, minimum width=1pt] () at (4.4,-3.8)  {};
\node[circle, draw, fill=black!0, inner sep=2pt, minimum width=1pt] () at (4.4,-4.2)  {};
\node[circle, draw, fill=black!0, inner sep=2pt, minimum width=1pt] () at (4.4,-3.8)  {};
\node[circle, draw, fill=black!0, inner sep=2pt, minimum width=1pt] () at (4.4,-4.2)  {};
\node[circle, draw, fill=black!0, inner sep=2pt, minimum width=1pt] () at (4.4,-4.8)  {};
\node[circle, draw, fill=black!0, inner sep=2pt, minimum width=1pt] () at (4.4,-5.2)  {};
\node[circle, draw, fill=black!0, inner sep=2pt, minimum width=1pt] () at (4.4,-5.8)  {};
\node[circle, draw, fill=black!0, inner sep=2pt, minimum width=1pt] () at (4.4,-6.2)  {};
\node[circle, draw, fill=black!0, inner sep=2pt, minimum width=1pt] () at (4.4,-6.8)  {};
\node[circle, draw, fill=black!0, inner sep=2pt, minimum width=1pt] () at (4.4,-7.2)  {};
\node[circle, draw, fill=black!0, inner sep=2pt, minimum width=1pt] () at (4.4,-7.8)  {};
\node[circle, draw, fill=black!0, inner sep=2pt, minimum width=1pt] () at (4.4,-8.2)  {};
\node[circle, draw, fill=black!0, inner sep=2pt, minimum width=1pt] () at (4.4,-8.8)  {};
\node[circle, draw, fill=black!0, inner sep=2pt, minimum width=1pt] () at (4.4,-9.2)  {};
\node[circle, draw, fill=black!0, inner sep=2pt, minimum width=1pt] () at (4.4,-9.8)  {};
\node[circle, draw, fill=black!0, inner sep=2pt, minimum width=1pt] () at (4.4,-10.2)  {};
\node[circle, draw, fill=black!0, inner sep=2pt, minimum width=1pt] () at (4.4,-10.8)  {};
\node[circle, draw, fill=black!0, inner sep=2pt, minimum width=1pt] () at (4.4,-11.2)  {};
\node[circle, draw, fill=black!0, inner sep=2pt, minimum width=1pt] () at (4.4,-11.8)  {};
\node[circle, draw, fill=black!0, inner sep=2pt, minimum width=1pt] () at (4.4,-12.2)  {};
\node[circle, draw, fill=black!0, inner sep=2pt, minimum width=1pt] () at (4.4,-12.8)  {};
\node[circle, draw, fill=black!0, inner sep=2pt, minimum width=1pt] () at (4.4,-13.2)  {};
\node[circle, draw, fill=black!0, inner sep=2pt, minimum width=1pt] () at (4.4,-13.8)  {};
\node[circle, draw, fill=black!0, inner sep=2pt, minimum width=1pt] () at (4.4,-14.2)  {};
\node[circle, draw, fill=black!0, inner sep=2pt, minimum width=1pt] () at (4.4,-14.8)  {};
\node[circle, draw, fill=black!0, inner sep=2pt, minimum width=1pt] () at (4.4,-15.2)  {};

 	 \node[circle, draw, fill=black!10, inner sep=2pt, minimum width=1pt] (w3) at (0,0)  {};
 	 \node[circle, draw, fill=black!10, inner sep=2pt, minimum width=1pt] (w4) at (0,-1) {};
 	 \node[circle, draw, fill=purple!50, inner sep=2pt, minimum width=1pt] (w5) at (0,-2) {};
 	 \node[circle, draw, fill=purple!50, inner sep=2pt, minimum width=1pt] (w6) at (0,-3) {};
 	 \node[circle, draw, fill=black!10, inner sep=2pt, minimum width=1pt] (v3) at (0,-4)  {};
 	 \node[circle, draw, fill=black!10, inner sep=2pt, minimum width=1pt] (v4) at (0,-5) {};
 	 \node[circle, draw, fill=purple!50, inner sep=2pt, minimum width=1pt] (v5) at (0,-6) {};
 	 \node[circle, draw, fill=purple!50, inner sep=2pt, minimum width=1pt] (v6) at (0,-7) {};
 	 \node[circle, draw, fill=black!50, inner sep=2pt, minimum width=2pt] (w3i) at (0,-8)  {};
 	 \node[circle, draw, fill=orange, inner sep=2pt, minimum width=2pt] (w4i) at (0,-9) {};
 	 \node[circle, draw, fill=black!50, inner sep=2pt, minimum width=2pt] (w5i) at (0,-10) {};
 	 \node[circle, draw, fill=orange, inner sep=2pt, minimum width=2pt] (w6i) at (0,-11) {};
 	 \node[circle, draw, fill=black!50, inner sep=2pt, minimum width=2pt] (v3i) at (0,-12)  {};
 	 \node[circle, draw, fill=orange, inner sep=2pt, minimum width=2pt] (v4i) at (0,-13) {};
 	 \node[circle, draw, fill=black!50, inner sep=2pt, minimum width=2pt] (v5i) at (0,-14) {};
 	 \node[circle, draw, fill=orange, inner sep=2pt, minimum width=2pt] (v6i) at (0,-15) {};

	 \node[circle, draw, fill=green!60!black!60, inner sep=2pt, minimum width=2pt] (w1) at (-4,-.5) {};
 	 \node[circle, draw, fill=green!60!black!60, inner sep=2pt, minimum width=2pt] (w2) at (-4,-2.5) {}; 
 	 \node[circle, draw, fill=red!60!black!60, inner sep=2pt, minimum width=2pt] (v1) at (-4,-4.5) {};
 	 \node[circle, draw, fill=red!60!black!60, inner sep=2pt, minimum width=2pt] (v2) at (-4,-6.5) {};	 
	 \node[circle, draw, fill=yellow, inner sep=2pt, minimum width=2pt] (w1i) at (-4,-8.5) {};
 	 \node[circle, draw, fill=yellow, inner sep=2pt, minimum width=2pt] (w2i) at (-4,-10.5) {};
 	 \node[circle, draw, fill=cyan, inner sep=2pt, minimum width=2pt] (v1i) at (-4,-12.5) {};
 	 \node[circle, draw, fill=cyan, inner sep=2pt, minimum width=2pt] (v2i) at (-4,-14.5) {};

 	 \node[circle, draw, fill=blue!80, inner sep=2pt, minimum width=2pt] (w) at (-8,-1.5) {};
 	 \node[circle, draw, fill=blue!80, inner sep=2pt, minimum width=2pt] (v) at (-8,-5.5) {};	
 	 \node[circle, draw, fill=blue!20, inner sep=2pt, minimum width=2pt] (wi) at (-8,-9.5) {};
 	 \node[circle, draw, fill=blue!20, inner sep=2pt, minimum width=2pt] (vi) at (-8,-13.5) {};
	 
     \node[circle, draw, fill=red!60, inner sep=2pt, minimum width=2pt] (r) at (-12,-3.5) {};
     \node[] () at (-12,-2.5) {\tiny{$0$}};
 	 \node[circle, draw, fill=green!60, inner sep=2pt, minimum width=2pt] (ri) at (-12,-11.5) {};
     \node[] () at (-12,-10.5) {\tiny{$1$}};

 	 \node[circle, draw, fill=black!0, inner sep=2pt, minimum width=2pt] (I) at (-16,-7.5) {};

   \node[] () at (-14,1.4) {$X_1$};
   \node[] () at (-10,1.4) {$X_2$};
   \node[] () at (-6,1.4) {$X_3$};
   \node[] () at (-2,1.4) {$X_4$};
   \node[] () at (2.2,1.4) {$X_5$};

	%---EDGES---	 
 	 \draw[->]   (I) --    (r);
 	 \draw[->]   (I) --   (ri) ;

 	 \draw[->]   (r) --   (w) ;
 	 \draw[->]   (r) --   (v) ;

 	 \draw[->]   (w) --  (w1) ;
 	 \draw[->]   (w) --  (w2) ;

 	 \draw[->]   (w1) --   (w3) ;
 	 \draw[->]   (w1) --   (w4) ;
 	 \draw[->]   (w2) --  (w5) ;
 	 \draw[->]   (w2) --  (w6) ;

 	 \draw[->]   (v) --  (v1) ;
 	 \draw[->]   (v) --  (v2) ;

 	 \draw[->]   (v1) --  (v3) ;
 	 \draw[->]   (v1) --  (v4) ;
 	 \draw[->]   (v2) --  (v5) ;
 	 \draw[->]   (v2) --  (v6) ;

 	 \draw[->]   (ri) --   (wi) ;
 	 \draw[->]   (ri) -- (vi) ;

 	 \draw[->]   (wi) --  (w1i) ;
 	 \draw[->]   (wi) --  (w2i) ;

 	 \draw[->]   (w1i) --  (w3i) ;
 	 \draw[->]   (w1i) -- (w4i) ;
 	 \draw[->]   (w2i) --  (w5i) ;
 	 \draw[->]   (w2i) --  (w6i) ;

 	 \draw[->]   (vi) --  (v1i) ;
 	 \draw[->]   (vi) --  (v2i) ;

 	 \draw[->]   (v1i) --  (v3i) ;
 	 \draw[->]   (v1i) -- (v4i) ;
 	 \draw[->]   (v2i) -- (v5i) ;
 	 \draw[->]   (v2i) --  (v6i) ;
 	 
 	 \draw[->]   (w3) --   (4,0.2) ;
 	 \draw[->]   (w3) --   (4,-0.2) ;
 	 \draw[->]   (w4) --   (4,-0.8) ;
 	 \draw[->]   (w4) --   (4,-1.2) ;
 	 \draw[->]   (w5) --   (4,-1.8) ;
 	 \draw[->]   (w5) --   (4,-2.2) ;
 	 \draw[->]   (w6) --   (4,-2.8) ;
 	 \draw[->]   (w6) --   (4,-3.2) ;
 	 \draw[->]   (v3) --   (4,-3.8) ;
 	 \draw[->]   (v3) --   (4,-4.2) ;
 	 \draw[->]   (v4) --   (4,-4.8) ;
 	 \draw[->]   (v4) --   (4,-5.2) ;
 	 \draw[->]   (v5) --   (4,-5.8) ;
 	 \draw[->]   (v5) --   (4,-6.2) ;
 	 \draw[->]   (v6) --   (4,-6.8) ;
 	 \draw[->]   (v6) --   (4,-7.2) ;
 	 \draw[->]   (w3i) --   (4,-7.8) ;
 	 \draw[->]   (w3i) --   (4,-8.2) ;
 	 \draw[->]   (w4i) --   (4,-8.8) ;
 	 \draw[->]   (w4i) --   (4,-9.2) ;
 	 \draw[->]   (w5i) --   (4,-9.8) ;
 	 \draw[->]   (w5i) --   (4,-10.2) ;
 	 \draw[->]   (w6i) --   (4,-10.8) ;
 	 \draw[->]   (w6i) --   (4,-11.2) ;
 	 \draw[->]   (v3i) --   (4,-11.8) ;
 	 \draw[->]   (v3i) --   (4,-12.2) ;
 	 \draw[->]   (v4i) --   (4,-12.8) ;
 	 \draw[->]   (v4i) --   (4,-13.2) ;
 	 \draw[->]   (v5i) --   (4,-13.8) ;
 	 \draw[->]   (v5i) --   (4,-14.2) ;
 	  \draw[->]   (v6i) --   (4,-14.8) ;
 	 \draw[->]   (v6i) --   (4,-15.2) ;

\end{tikzpicture}
\hspace{1cm}
\begin{tikzpicture}[thick,scale=0.29]
	\draw (0,0) -- (18,0) -- (18,-9) -- (0,-9) -- cycle;
    \draw (9,0) -- (9,-18) -- (0,-18) -- (0,-9);

	%---NODES---
	%empty context	 
	 \node[circle, draw, fill=black!0, inner sep=1pt, minimum width=1pt] (1v1) at (1.5,-2.5) {$1$};
 	 \node[circle, draw, fill=black!0, inner sep=1pt, minimum width=1pt] (1v2) at (7.5,-2.5) {$2$};
 	 \node[circle, draw, fill=black!0, inner sep=1pt, minimum width=1pt] (1v3) at (1.5,-6) {$3$};
 	 \node[circle, draw, fill=black!0, inner sep=1pt, minimum width=1pt] (1v4) at (7.5,-6) {$4$}; 	 
 	 \node[circle, draw, fill=black!0, inner sep=1pt, minimum width=1pt] (1v5) at (4.5,-8) {$5$};

	%X1=0
 	 \node[circle, draw, fill=black!0, inner sep=1pt, minimum width=1pt] (2v2) at (16.5,-2.5) {$2$};
 	 \node[circle, draw, fill=black!0, inner sep=1pt, minimum width=1pt] (2v3) at (10.5,-6) {$3$};
 	 \node[circle, draw, fill=black!0, inner sep=1pt, minimum width=1pt] (2v4) at (16.5,-6) {$4$}; 	 
 	 \node[circle, draw, fill=black!0, inner sep=1pt, minimum width=1pt] (2v5) at (13.5,-8) {$5$};

	%X1=1
	 \node[circle, draw, fill=black!0, inner sep=1pt, minimum width=1pt] (3v2) at (7.5,-11.5) {$2$};
 	 \node[circle, draw, fill=black!0, inner sep=1pt, minimum width=1pt] (3v3) at (1.5,-15) {$3$};
 	 \node[circle, draw, fill=black!0, inner sep=1pt, minimum width=1pt] (3v4) at (7.5,-15) {$4$}; 	 
 	 \node[circle, draw, fill=black!0, inner sep=1pt, minimum width=1pt] (3v5) at (4.5,-17) {$5$};

	%---EDGES---	 
 	 \draw[->]   (1v1) -- (1v2) ;
 	 \draw[->]   (1v1) -- (1v3) ;
 	 \draw[->]   (1v1) -- (1v4) ;
 	 \draw[->]   (1v1) -- (1v5) ;
 	 \draw[->]   (1v2) -- (1v4) ;
 	 \draw[->]   (1v3) -- (1v5) ;
 	 \draw[->]   (1v4) -- (1v5) ;

 	 \draw[->]   (2v2) -- (2v4) ;
 	 \draw[->]   (2v3) -- (2v5) ;
 	 
 	 \draw[->]   (3v2) -- (3v4) ;
 	 \draw[->]   (3v4) -- (3v5) ;

	 %---LABELS---
	 \node at (1.15,-1) {$G_{\emptyset}$} ;
	 \node at (11.2,-1) {$G_{X_1=0}$} ;
	 \node at (2.2,-10) {$G_{X_1 = 1}$} ;
\end{tikzpicture}
\caption{\small{A balanced CStree on whose empty context DAG we have to use directed moralization twice to receive a perfect DAG.} 
} 
\label{fig:directed_moralization}
\end{center}
\end{figure}
\end{example}

{\bf Acknowledgements}.
The authors thank the editors and the anonymous referees for the careful reading of the manuscript and their many insightful comments and suggestions. In particular, we thank Referee \#2 for Remark 5.12.
The authors also thank the Max-Planck-Institute for Mathematics in the Sciences in Leipzig for their hospitality in the Summer of 2022.

YA was supported by the National Science Foundation Graduate Research Fellowship Grant No. DGE 2146752. ED was supported by the FCT grant 2020.01933.CEECIND, and partially supported by CMUP under the FCT grant UIDB/00144/2020.

\setcitestyle{numbers}
\bibliographystyle{alpha} 
\bibliography{refs.bib}

\end{document}